\newcommand{\R}{\mathbb R}
\newcommand{\Z}{\mathbb Z}
\newcommand{\hil}{\mathcal{H}}
\newcommand{\ma}{\mathcal{A}}
\newcommand{\mb}{\mathcal{B}}
\newcommand{\md}{\mathcal{D}}
\newcommand{\sgn}{\text{sgn}}
\newcommand{\wnt}{w^{\theta}_N}
\newcommand{\e}{\epsilon}
\newcommand{\jd}{\rangle}
\newcommand{\ji}{\langle}
\newtheorem{theorem}{Theorem}[section]
\newtheorem{proposition}[theorem]{Proposition}
\newtheorem{remark}[theorem]{Remark}
\newtheorem{lemma}[theorem]{Lemma}
\newtheorem{claim}[theorem]{Claim}
\newtheorem{thmx}{Theorem}
\author[]{}
\title[The  ${\text f}$BBM equation]{On the persistence properties  for the fractionary BBM  equation with low dispersion in weighted Sobolev spaces}
\author[G. Fonseca]{Germ\'an Fonseca}
\address{Departamento de Matem\'aticas, Universidad Nacional de Colombia Carrera 45 No. 26-85, Edificio Uriel Guti\'errez Bogot\'a D.C., Colombia}
\curraddr{}
\email{gefonsecab@unal.edu.co}
\author[O. Ria\~no]{Oscar Ria\~no}
\address{Departamento de Matem\'aticas, Universidad Nacional de Colombia Carrera 45 No. 26-85, Edificio Uriel Guti\'errez Bogot\'a D.C., Colombiaa}
\curraddr{}
\email{ogrianoc@unal.edu.co}
\author[G. Rodriguez-Blanco]{Guillermo Rodriguez-Blanco}
\address{Departamento de Matem\'aticas, Universidad Nacional de Colombia Carrera 45 No. 26-85, Edificio Uriel Guti\'errez Bogot\'a D.C., Colombia}
\curraddr{}
\email{grodriguezb@unal.edu.co}
\begin{document}
\keywords{BBM equation, well-posedness, weighted Sobolev spaces}
\subjclass{Primary: 35B05. Secondary: 35B60}
%\date{}
\dedicatory{In memory of Rafael I\'orio}
%%%%%%%%%%%%%%
\begin{abstract} We consider the initial value problem associated to the low dispersion fractionary Benjamin-Bona-Mahony equation, fBBM. Our aim is to establish local persistence results in weighted Sobolev spaces and to obtain unique continuation results that imply that those results above are sharp. Hence, arbitrary polynomial type decay is not preserved by the fBBM flow.

\end{abstract}

\maketitle
%\centerline{(VERSION 2.6)}

\section{Introduction.}
In this work, we shall study  the initial value problem (IVP) for the fractionary Benjamin-Bona-Mahony (fBBM) equation
\begin{equation}\label{fBBM}
\begin{cases}
\partial_t u+\partial_x u+ D^{\alpha}\partial_{t} u+\partial_x (u^2) = 0, \qquad t, x\in \R,\;\;\;\; \\
u(x,0) = \varphi(x),
\end{cases}
\end{equation}
where  $\alpha\in(0,1)$ and $D^{\alpha}=(-\partial_x^2)^{\frac{\alpha}{2}}$ denotes the fractionary derivative of order $\alpha$ in the $x$ direction. 

This equation is a generalization of the famous BBM $(\alpha=2)$   and rBO $(\alpha=1)$ equations which were originally introduced by Benjamin, Bona, and Mahony in \cite{BenjaminBonaMahony1972}. They investigated these equations as regularized versions of the  Korteweg-de Vries equation (KdV) and Benjamin-Ono equation (BO) respectively in the context of small amplitude long waves in nonlinear dispersive systems. We emphasize that they have also been studied from a mathematical and physical point of view in many other contexts as well (see {\it e.g.,} \cite{Peregrine1966,MeissHorton1982,An,KalischBona2000,FoRoSa} and references therein).

% equation in \eqref{fBBM}, one obtains the Benjamin-Bona-Mahony (BBM) equation
%\begin{equation}\label{BBM}
 %\partial_t u+\partial_x u-\partial_x^2\partial_t u+\partial_x(u^2)=0, \qquad x,t \in \mathbb{R}.   
%\end{equation}
 %investigated the BBM equation \cite{BenjaminBonaMahony1972} as a regularized version of the Korteweg-de Vries equation (KdV) in the context of small amplitude long waves in nonlinear dispersive systems. This model was also derived by Peregrine \cite{Peregrine1966} in the study of tidal waves. We emphasize that \eqref{BBM} has also been applied in a wide range of physical contexts including drift waves in plasma, and optical devices, see \cite{MeissHorton1982}. 

The study of competition between dispersion and nonlinearity in the dynamics of solutions for nonlinear dispersive partial differential equations has led to several studies. The problem is generally handled by fixing the dispersion and increasing the nonlinearity. Alternatively,  the use of fractional derivatives has become popular, as these operators allow testing different dispersive effects for a fixed nonlinearity. Following this strategy, equation fBBM appears as a fractional version of BBM equation, in which variations on $\alpha>0$ offer flexibility to study the behavior of solutions under different dispersion regimes. In particular, this paper focuses on the \emph{lower dispersion setting} $\alpha\in (0,1)$, where it is worth mentioning that the numerical simulation of \cite{KleinSaut2015} suggests that a finite type of blow-up may happen for solutions of fBBM when $0<\alpha\leq \frac{1}{3}$,  but not when $\alpha>\frac{1}{3}$ (where solutions may be global). To a certain extent, this shows that cases $\alpha\in(0,1)$ in fBBM are interesting to have a complete overview of the instances where the dispersive effects are stronger or weaker than the nonlinear ones. 

%We also remark that a general class of equations that incorporates fBBM was introduced in \cite[Appendix 1]{BenjaminBonaMahony1972},  where one notable case $\alpha=1$ corresponds to a regularized version of the Benjamin-Ono equation (BO). For more details on this equation, we refer to \cite{An,KalischBona2000,FoRoSa}. 

We also remark that another model with variable dispersion that has been extensively studied is the \emph{fractional Korteweg-de Vries (fKdV)} equation
\begin{equation}\label{fKdV}
  \partial_t u-\partial_x D^{\alpha}u+\partial_x(u^2)=0, \qquad x,t\in \mathbb{R}. 
\end{equation}
Note that $\alpha=2$ in fKdV is the KdV equation, and $\alpha=1$ coincides with the BO equation. For different studies of the fKdV equation with $\alpha\in [-1,2)$, $\alpha \neq 0, 1$, we refer to \cite{Angulo2018,HerIonescuKenigKoch2010,KenigPilodPonceVega2020,GFFLGP1,Ri,HunterIfrimTataruWong2015,KenigMartelRobbiano2011,LiPiSa,Argenis2020,MolinetPilodVento2013,CunhaRiano2022} and references therein.

Regarding invariants, in contrast with fKdV, the equation fBBM is not invariant under any rescaling $u_{\lambda}(x,t)=\lambda^{a}u(\lambda^{b}x, \lambda^{c} t)$, $\lambda>0$. Nevertheless, fBBM formally conserves the \emph{Energy}
\begin{equation}\label{Energy}
E[u(t)]=\int_{\mathbb{R}} \big((D^{\frac{\alpha}{2}}u(x,t))^2+ (u(x,t))^2 \big)\, dx,     
\end{equation}
and the \emph{Hamiltonian}
\begin{equation}\label{Hamil}
H[u(t)]=\frac{1}{2}\int_{\mathbb{R}} \big((u(x,t))^2+ \frac{2}{3}(u(x,t))^3 \big)\, dx.     
\end{equation}

Concerning local well-posedness for the IVP \eqref{fBBM} (a Cauchy problem is well-posed in a given function space one has existence, uniqueness of solutions, and continuous dependence of the data-to-solution flow map), we recall that in \cite{LiPiSa} local well-posedness (LWP) for \eqref{fBBM} is proven in $H^s(\R)$ with $s> \max\{\frac32-\alpha,1\}$, see Theorem A below. For studies on the existence time of solutions of fBBM, see \cite{Nilsson2022}. 
\\ \\
In this work, we seek to study the following question: \emph{If given initial data with a certain spatial polynomial type decay, is there a solution of the respective Cauchy problem \eqref{fBBM} that preserves this same spatial decay}?
\\ \\
Given the generality of the previous question, its study is typically restricted to showing persistence of solutions in the Sobolev spaces
\begin{equation*}
  Z_{s,r}=Z_{s,r}(\mathbb{R}):=H^s(\mathbb{R})\cap L^2(|x|^{2r}\,dx),  
\end{equation*}
which are natural settings for dispersive equations, and in them, the regularity is measured through the parameter $s$, and the spatial decay with $r>0$. Consequently, the results of this article are divided into two parts. In the first one, we will give a positive result of persistence in weighted spaces, i.e., we show that given $\alpha\in (0,1)$, there exist $s_{\alpha}>0$, such that if $s>s_{\alpha}$, $0<r<\frac{3}{2}+\alpha$, then for any $\varphi \in Z_{s,r}$, there exists a local solution of \eqref{fBBM} with initial condition $\varphi$ in the class $C([0,T];Z_{s,r})$. In the following part, we use unique continuation principles (UCP), to show that the decay parameter $r=\frac{3}{2}+\alpha$ is in a sense optimal in the $L^2$ theory of solutions of \eqref{fBBM}. More precisely, we will show that for some initial conditions $\varphi$, the corresponding solutions $u$ of \eqref{fBBM} cannot be at two different times in the class $Z_{s,\frac{3}{2}+\alpha}$ unless it is
the null solutions. We emphasize that the above questions have been extensively studied for fKdV with $\alpha\geq -1$ (see \cite{Ri,FoPo,GFFLGP1,CunhaRiano2022}), but this work appears to be the first to study such questions for the fractional fBBM. We will also deduce that the behavior of solutions of fBBM in $Z_{s,r}$-spaces differs from that of the fKdV equation.

%Regarding invariants, in contrast with fKdV, the equation fBBM is not invariant under any rescaling $u_{\lambda}(x,t)=\lambda^{a}u(x^{b}, t^{c})$. Nevertheless, fBBM formally conserves the \emph{Energy}
%\begin{equation}\label{Energy}
%E[u(t)]=\int_{\mathbb{R}} \big((D^{\frac{\alpha}{2}}u(x,t))^2+ (u(x,t))^2 \big)\, dx,     
%\end{equation}
%and the \emph{Hamiltonian}
%\begin{equation}\label{Hamil}
%H[u(t)]=\frac{1}{2}\int_{\mathbb{R}} \big((u(x,t))^2+ \frac{2}{3}(u(x,t))^3 \big)\, dx.     
%\end{equation}

Our work is also motivated by the spatial decay of solitary wave solutions, which are special solutions of \eqref{fBBM}, traveling in time and preserving its shape.  The {\it traveling  waves} for the equation \eqref{fBBM} are of the form $u(x,t)=Q_c(x-ct)$, $c>0$, where $Q_c$ satisfies 
\begin{equation}\label{EQ:groundState}
(c-1)\,Q_c+cD^{\alpha}Q_c-Q^{2}_c=0.
\end{equation}
When $c>1$, writing
\begin{equation}\label{E:Qscaled}
\Psi=\frac{2}{(c-1)}Q_c\Big(\Big(\frac{c}{c-1}\Big)^{\frac{1}{\alpha}}\,  x\Big), \qquad x\in \mathbb{R},
\end{equation} 
one has that $\Psi$ solves
\begin{equation}\label{Greq}
  \Psi+D^{\alpha}\Psi-\frac{1}{2}\Psi^2=0.  
\end{equation}
The results in \cite{FraLenz2013,FraLenzSilv2016} show that for any $0<\alpha<1$ there exists a unique non-trivial, non-negative, even function $\Psi \in H^{\frac{\alpha}2}(\mathbb R)\cap C^{\infty}(\mathbb{R})$ solution of \eqref{Greq}, which satisfies
\begin{equation}\label{poldecayGS}
\frac{C_1}{1+|x|^{1+\alpha}} \leq \Psi(x) \leq \frac{C_2}{1+|x|^{1+\alpha}}
\end{equation}
for all $x \in \mathbb{R}$ with some constants $C_2 \geq C_1>0$ (which depend on $\alpha$, $\Psi$). Therefore, the connection established through the scaling \eqref{E:Qscaled} shows that for $c>1$, the solitary wave solutions of fBBM have a polynomial spatial decay, whose order is determined by the dispersion $\alpha$ as in \eqref{poldecayGS}. For this reason, it is natural to study questions on well-posedness in spaces $Z_{s,r}(\mathbb{R})$. Concerning stability issues of solitary waves, we refer to \cite{An,Angulo2018,KleinSaut2015,LinaresPilodSaut2015,Zeng2006}.

%%%%%%%%%%%%%%%%%%%%%%%%%%%%%%%%%%%%%%%%%%%%%%%%%%%%%%%%%%%%%%%%%%%%%%%%%%%%%%%%%%%%%%%%%%%%%%%%%%%%%%%%%%%%%%%%%%%%%%%%%%%%%%%%%%%%%%%%%%%%%%%%%%%%%

\subsection{Main results}

Before presenting our main results, we begin by recalling the following local well-posedness (LWP) result for solutions of \eqref{fBBM}. %But first, we will say that a Cauchy problem is well-posed in a given function space if one can assure existence, uniqueness of solution, and continuous dependence on the data-to-solution flow map.

\begin{thmx}\label{theorem1}  
 The Cauchy problem \eqref{fBBM} is LWP in $H^s(\R)$, $s> \max\{\frac32-\alpha,1\}$. 
 % More precisely,  let $s> \max\{\frac32-\alpha,1\}$.  Then for every $\varphi\in H^s(\mathbb{R})$, there exists a time $T=T(\|u_0\|_{H^s})>0$, and a unique solution $u$ of \eqref{fBBM} satisfying $u(\cdot, 0)=\varphi$ such that
 %\begin{equation*}
%u\in  C([0,T];H^s(\mathbb{R})).   
% \end{equation*}
% Moreover, for any $0<T'<T$, there exists a neighborhood $V$ of $\varphi$ in $H^s(\mathbb{R})$ such that the flow map data-solution $v_0\in V\mapsto v\in C([0,T'];H^s(\mathbb{R}))$ is continuous.
\end{thmx}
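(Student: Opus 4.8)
The plan is to recast \eqref{fBBM} in evolution form by inverting the dispersive operator. Applying $\da^{-1}$ and integrating in time gives the Duhamel formulation
\begin{equation*}
u(t)=\varphi-\int_0^t \da^{-1}\partial_x\big(u(s)+u(s)^2\big)\,ds,
\end{equation*}
so the dynamics are governed by the Fourier multiplier $\da^{-1}\partial_x$, with symbol $i\xi/\fa$ growing like $|\xi|^{1-\alpha}$ at high frequencies. In sharp contrast with the classical case $\alpha=2$, where $\da^{-1}\partial_x$ is smoothing and a contraction closes trivially at low regularity, for $\alpha\in(0,1)$ this operator \emph{loses} $1-\alpha$ derivatives, and \eqref{fBBM} behaves like a quasilinear, weakly dispersive model. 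A naive contraction in $C([0,T];H^s(\R))$ therefore cannot close in a single space, and I would instead build the argument on energy estimates.

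First I would run the energy method on $\da\,\partial_t u=-\partial_x(u+u^2)$. Applying $J^s=(1-\partial_x^2)^{s/2}$, pairing with $J^s u$ in $L^2$, and using the self-adjointness and positivity of $\da$, the left-hand side becomes $\tfrac12\frac{d}{dt}\mathcal E_s(t)$ for the weighted energy $\mathcal E_s=\|J^s u\|_{L^2}^2+\|D^{\alpha/2}J^s u\|_{L^2}^2$; crucially $\mathcal E_s\simeq\|u\|_{H^s}^2$ at low frequencies but is equivalent to $\|u\|_{H^{s+\alpha/2}}^2$ at high frequencies, which is the smoothing built into the BBM structure. The transport term $\langle\partial_x J^s u,J^s u\rangle$ vanishes by antisymmetry, so only the quadratic contribution survives. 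Writing $\partial_x(u^2)=2uu_x$ and splitting with the Kato--Ponce commutator, the leading piece $\langle u\,\partial_x J^s u,J^s u\rangle$ integrates by parts to $-\tfrac12\int u_x(J^s u)^2\,dx$, while $\|[J^s,u]u_x\|_{L^2}$ is controlled by the commutator estimate; both are bounded by $\|u_x\|_{L^\infty}\|u\|_{H^s}^2$ up to lower-order terms, giving $\frac{d}{dt}\mathcal E_s\lesssim\|u_x\|_{L^\infty}\,\mathcal E_s$. Estimating $\|u_x\|_{L^\infty}\lesssim\|u\|_{H^{s+\alpha/2}}\lesssim\mathcal E_s^{1/2}$ through the smoothing inside $\mathcal E_s$ yields $\frac{d}{dt}\mathcal E_s\lesssim\mathcal E_s^{3/2}$, which by ODE comparison gives a priori control on a time interval depending only on $\|\varphi\|_{H^s}$, and pushes the admissible regularity down to roughly $s>\tfrac32-\tfrac{\alpha}{2}$.

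To construct the solution I would regularize, solving the parabolic problem obtained by adding $-\mu\partial_x^2 u$ (or by truncating the nonlinearity in frequency), derive the preceding bounds uniformly in $\mu$, and pass to the limit $\mu\to0$ by a compactness argument. Uniqueness follows from the same energy identity applied to the difference of two solutions, and continuous dependence of the data-to-solution map from a Bona--Smith argument comparing the solution issued from $\varphi$ with the evolution of mollified data.

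The main obstacle is reaching the sharp threshold $s>\max\{\tfrac32-\alpha,1\}$ rather than the $s>\tfrac32-\tfrac{\alpha}{2}$ delivered by the bare energy method. The constraint $s>1$ is a technical/algebra requirement adapted to the quadratic nonlinearity and the $1-\alpha$ loss of $\da^{-1}\partial_x$, whereas the branch $\tfrac32-\alpha$ can only be attained by genuinely exploiting dispersion: the linear group with phase $\xi/\fa$ is weakly dispersive at high frequencies, and one must establish and insert Strichartz or local-smoothing estimates for it (gaining up to $\alpha$ derivatives) in order to upgrade the crude $\|u_x\|_{L^\infty}$ control used above. Balancing this dispersive gain against the derivative loss of $\da^{-1}\partial_x$, uniformly across $\alpha\in(0,1)$, is the delicate step of the argument.
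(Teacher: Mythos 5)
First, a point of reference: the paper does not prove Theorem A. It is imported from \cite[Theorem 4.12]{LiPiSa}, and the only in-paper commentary is that the restriction $s>1$ is forced by a specific nonlinear estimate there and that uniqueness is settled in $H^{\alpha/2}(\R)$ by integration by parts and Sobolev embedding. So your proposal can only be measured against the cited proof, whose outer architecture (regularization, energy estimates, passage to the limit, Bona--Smith for continuous dependence) you do reproduce.

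There are, however, two genuine gaps. First, your accounting for the bare energy method is off: the modified energy $\mathcal E_s=\|J^su\|_{L^2}^2+\|D^{\alpha/2}J^su\|_{L^2}^2$ is equivalent to $\|u\|_{H^{s+\alpha/2}}^2$, so demanding $\mathcal E_s(0)<\infty$ already requires $\varphi\in H^{s+\alpha/2}$; in terms of the true regularity $\sigma=s+\tfrac{\alpha}{2}$ of the data, your Gronwall inequality closes only for $\sigma>\tfrac32$, so the ``smoothing built into the BBM structure'' buys nothing in this form and the claimed intermediate threshold $s>\tfrac32-\tfrac{\alpha}{2}$ is illusory. Second, and more seriously, the step that would actually reach $\tfrac32-\alpha$ is both left unexecuted (you yourself defer ``the delicate step'') and misidentified: for $\alpha\in(0,1)$ the phase $\xi/(1+|\xi|^{\alpha})$ is barely dispersive at high frequencies, and the gain of $\alpha$ derivatives in the cited proof does not come from Strichartz or local-smoothing estimates for the group. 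It comes from the fact that, in the evolution form $\partial_tu=Au+A(u^2)$ with $A=-\partial_x(1+D^{\alpha})^{-1}$, the nonlinearity carries an operator of order $1-\alpha$ rather than $1$; exploited through commutator estimates in the energy identity (in the spirit of Lemma \ref{commA} of this paper), this replaces the $\|\partial_xu\|_{L^{\infty}}$ control by roughly $(1-\alpha)$ derivatives of $u$ in $L^{\infty}$, whence $s>\tfrac12+(1-\alpha)=\tfrac32-\alpha$ by Sobolev embedding, with the extra constraint $s>1$ coming from the remaining product estimates. As written, your argument establishes nothing below $s>\tfrac32$ and directs the effort toward dispersive estimates that are neither proved nor the ones that make the theorem work.
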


Theorem \ref{theorem1} is established in \cite[Theorem 4.12]{LiPiSa}, let us mention some comments.  Some estimates, for example \cite[(4.26)]{LiPiSa}, force that LWP result requires the condition $s>1$ for any $\alpha\in (0,1)$.  On the other hand, to prove uniqueness of solutions, it is only necessary to consider uniqueness in $H^{\frac{\alpha}{2}}(\mathbb{R})$, whose estimates can be deduced from integration by parts and using Sobolev embedding without going through fractional Leibniz's rule.

Using the local existence of solutions in $H^s(\mathbb{R})$ granted by the previous theorem, our first main result shows the persistence of solutions in the space $Z_{s,r}$.
       
\begin{theorem}\label{Main2} Let $\varphi \in Z_{s,r}(\mathbb{R})$ with $ s>s_{\alpha}$ and $0<r<\frac32+\alpha$, where $s_{\alpha} =1$ if $\alpha\in[\frac{1}{2},1)$ and $ s_{\alpha}=2-2\alpha$ if $\alpha\in (0,\frac{1}{2}).$  Then there  exist a time $T>0$ and a unique $u\in C([0,T];Z_{s,r})$ solution of the initial value problem \eqref{fBBM} with initial condition $\varphi$.
\end{theorem}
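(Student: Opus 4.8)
The plan is to upgrade the $H^s$ solution supplied by Theorem~\ref{theorem1} to the weighted class by propagating the weight $\ji x\jd^{r}=(1+x^2)^{r/2}$ (equivalent to $|x|^r$ on $H^s$) through a weighted energy estimate, the threshold $r<\tfrac32+\alpha$ being dictated by the regularity of the Fourier multiplier of the equation. First I would rewrite \eqref{fBBM} in the nonlocal evolution form
\begin{equation*}
\p_t u=-\da^{-1}\p_x\big(u+u^2\big),
\end{equation*}
and record that $\da^{-1}\p_x$, whose symbol is $i\xi/\fa$, is skew-adjoint on $L^2$ and of order $1-\alpha>0$, so that it loses $1-\alpha$ derivatives but is otherwise benign; this is the regularizing advantage of the BBM structure over fKdV. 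Since $s_\alpha\ge\max\{\tfrac32-\alpha,1\}$, Theorem~\ref{theorem1} already furnishes a unique $u\in C([0,T];H^s)$, and because $Z_{s,r}\hookrightarrow H^s$ uniqueness in the weighted class is inherited for free; the whole problem is therefore the persistence of the weight.

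The core is a single weighted estimate: for $0<r<\tfrac32+\alpha$ and $s>s_\alpha$,
\begin{equation*}
\big\|\ji x\jd^{r}\,\da^{-1}\p_x(u+u^2)\big\|_{L^2}\lesssim C\big(\|u\|_{H^s}\big)\big(1+\|\ji x\jd^{r}u\|_{L^2}\big).
\end{equation*}
To obtain it I would pass to Fourier variables, where multiplication by $\ji x\jd^{r}$ becomes essentially the operator $D^r_\xi$, and apply a Leibniz-type splitting to $D^r_\xi\big(a(\xi)\,\widehat{u+u^2}\big)$ with $a(\xi)=i\xi/\fa$. The diagonal term reproduces $\da^{-1}\p_x(\ji x\jd^{r}(u+u^2))$ and is controlled by $\|\ji x\jd^{r}u\|_{L^2}$ together with the derivative loss absorbed into $\|u\|_{H^s}$; the off-diagonal term is the multiplier commutator, governed by $D^{r}_\xi a(\xi)$. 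Here the restriction $r<\tfrac32+\alpha$ is forced: near $\xi=0$ one has $a(\xi)=i\xi-i\,\sgn(\xi)|\xi|^{1+\alpha}+\cdots$, and the singular part $\sgn(\xi)|\xi|^{1+\alpha}$ lies in $H^{\rho}_{\mathrm{loc}}$ exactly for $\rho<\tfrac32+\alpha$, so its fractional derivative of order $r$ is locally square-integrable precisely in that range. The regularity level $s_\alpha$ then appears in the nonlinear contribution $\da^{-1}\p_x(u^2)$: one distributes the weight onto a single factor and must absorb the $1-\alpha$ derivative loss by a Sobolev embedding, which is harmless for $\alpha\ge\tfrac12$ (giving $s_\alpha=1$, consistent with Theorem~\ref{theorem1}) but for $\alpha<\tfrac12$ the loss exceeds a half-derivative and forces the larger budget $s_\alpha=2-2\alpha$.

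With the estimate in hand I would run the energy method on truncated weights. Choosing bounded approximants $w_N\uparrow\ji x\jd^{r}$ with derivatives controlled so as to mimic $\ji x\jd^{r}$, I would compute
\begin{equation*}
\frac12\frac{d}{dt}\int w_N^2u^2\,dx=-\int w_N^2\,u\,\da^{-1}\p_x(u+u^2)\,dx,
\end{equation*}
use the skew-adjointness of $\da^{-1}\p_x$ to annihilate the leading term after writing $w_N\da^{-1}\p_x u=\da^{-1}\p_x(w_Nu)+[w_N,\da^{-1}\p_x]u$, and bound the surviving commutator and nonlinear contributions by the weighted estimate above, \emph{uniformly in} $N$. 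A Gronwall argument on $[0,T]$ and the limit $N\to\infty$ then give $\sup_{[0,T]}\|\ji x\jd^{r}u\|_{L^2}<\infty$, i.e. $u\in L^\infty([0,T];Z_{s,r})$. Finally I would upgrade $L^\infty_T$ to $C([0,T];Z_{s,r})$: the $H^s$-continuity is already known, weak continuity of $t\mapsto\ji x\jd^{r}u(t)$ in $L^2$ follows from the equation, and continuity of the norm $\|\ji x\jd^{r}u(t)\|_{L^2}$ (from the energy identity, or a Bona--Smith approximation by smooth rapidly decaying data) turns weak continuity into strong continuity.

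I expect the main obstacle to be the weighted estimate, and specifically the multiplier commutator governed by $D^r_\xi a(\xi)$: one must extract from the limited smoothness of $i\xi/\fa$ at the origin exactly the gain that keeps all error terms in $L^2$ up to, but not beyond, $r=\tfrac32+\alpha$, while simultaneously balancing the $1-\alpha$ derivative loss of $\da^{-1}\p_x$ against the available regularity $s_\alpha$ in the nonlinear term.
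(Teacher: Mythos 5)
Your proposal follows essentially the same route as the paper: truncated weights $w_N^\theta$, an energy identity exploiting the skew-adjointness of $A=-\partial_x(1+D^\alpha)^{-1}$, commutator estimates for $[w_N^\theta,A]$, Fourier-side fractional (Stein) derivatives of the symbol $\xi/(1+|\xi|^\alpha)$ to extract the threshold $r<\tfrac32+\alpha$ from its limited smoothness at the origin, Sobolev embedding and interpolation to absorb the $1-\alpha$ derivative loss in the nonlinear term (yielding $s_\alpha$), and Gronwall plus $N\to\infty$. The only organizational difference is that the paper iterates in the weight ($r\le 1$, then $1<r\le 2$, then $2<r<\tfrac32+\alpha$), decomposing $[x^2,A]$ into explicit operators $A_1$, $\hil D^{\alpha-1}A_3$, $\hil D^{2\alpha-1}A_3$, rather than proving your single weighted estimate in one stroke; this iteration is needed because for $r>1$ the commutator bounds require lower-order weighted norms of $u$ on the right-hand side.
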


Theorem \ref{Main2} is proved using energy estimates, which are involved given the small dispersion $\alpha\in (0,1)$, and the non-local nature of the operator $\partial_x(1+D^{\alpha})^{-1}$, which is associated to solutions of the linear equation $\partial_t u+\partial_x u+D^{\alpha}\partial_t u=0$. Following the strategies used for the fKdV in weighted spaces (see \cite{Ri,GFFLGP1}), we will use several fractional derivatives (see \eqref{d1b} below) and commutator estimates. Here, we remark the commutator estimate of Lemma \ref{commA}, which is useful to handle the operator $\partial_x(1+D^{\alpha})^{-1}$.

In the next theorem, we deduce some UCP for the Cauchy problem \eqref{fBBM}.

\begin{theorem}\label{Main3} Let $\alpha\in (0,1)$, $r=(\frac{3}{2}+\alpha\big)^{-}$ (i.e., $r=\frac{3}{2}+\alpha-\epsilon$, $0<\epsilon\ll 1$), $s>s_\alpha$  where $s_{\alpha}=1$ if $\alpha\in[\frac{1}{2},1)$, and $s_{\alpha}=2-2\alpha$, if $\alpha\in (0,\frac{1}{2})$. Let $u\in C([0,T];Z_{s,r})$ be a  solution of the initial value problem \eqref{fBBM}, such that  if at times $0\leq t_1<t_2<T$, it holds that $u(t_j)\in Z_{s,\frac32+\alpha}, j=1,2$, then the following identity holds true:
\begin{equation*}
  \hat{u}(0,t_1)=\int_{\mathbb{R}} u(x,t_1)\, dx=-\frac{1}{(t_2-t_1)}\int_{t_1}^{t_2}\int_{\mathbb{R}}(u(x,\tau))^2\, dx d\tau.
\end{equation*}
In particular, $\int u(x,t_1)\, dx \geq 0$ if and only if $u$ is identically zero on $[t_1,\infty)$.
\end{theorem}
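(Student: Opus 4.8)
The plan is to pass to the spatial Fourier transform, where the nonlocal operator $\partial_x(1+D^\alpha)^{-1}$ becomes multiplication by $m(\xi)=\frac{i\xi}{1+|\xi|^\alpha}$, and then to isolate the single non-smooth term of $m$ near $\xi=0$ that obstructs membership in $Z_{s,\frac32+\alpha}$. Writing \eqref{fBBM} as $\partial_t u=-(1+D^\alpha)^{-1}\partial_x(u+u^2)$ and integrating in time between $t_1$ and $t_2$, I first obtain
\[
\widehat{u}(\xi,t_2)-\widehat{u}(\xi,t_1)=-\,m(\xi)\,\Phi(\xi),\qquad \Phi(\xi):=\int_{t_1}^{t_2}\big(\widehat{u}+\widehat{u^2}\big)(\xi,\tau)\,d\tau .
\]
Since $m(0)=0$, evaluating at $\xi=0$ shows $\widehat{u}(0,\tau)=\int u(\cdot,\tau)\,dx$ is independent of $\tau$ (here $u(\tau)\in L^1$ because $r>\tfrac12$). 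Hence it suffices to prove $\beta:=\Phi(0)=\int_{t_1}^{t_2}\!\int_{\R}(u+u^2)\,dx\,d\tau=0$; indeed, replacing $\int_{t_1}^{t_2}\!\int u\,dx\,d\tau$ by $(t_2-t_1)\,\widehat{u}(0,t_1)$ then turns $\beta=0$ into the stated identity.

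To prove $\beta=0$ I would compare the regularity of both sides near $\xi=0$. Using that $|x|^\theta f\in L^2$ is equivalent to $\widehat{f}\in \dot H^\theta$, the hypotheses $u(t_j)\in Z_{s,\frac32+\alpha}$ give $\widehat{u}(\cdot,t_j)\in H^{\frac32+\alpha}$, so the left-hand side lies in $H^{\frac32+\alpha}$; on the other hand $u(\tau)\in Z_{s,r}$ uniformly for $\tau\in[t_1,t_2]$ (from $u\in C([0,T];Z_{s,r})$ and Minkowski's integral inequality), which yields only $\Phi\in H^{r}$ with $r=(\frac32+\alpha)^-$, continuous with $\Phi(0)=\beta$. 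The key is the small-$\xi$ expansion
\[
m(\xi)=i\xi-i\,\sgn(\xi)|\xi|^{1+\alpha}+m_{\mathrm{reg}}(\xi),
\]
in which $m_{\mathrm{reg}}$ gathers the higher-order terms $i\,\sgn(\xi)|\xi|^{1+2\alpha},\dots$, each locally in $H^{\frac32+\alpha}$. Splitting $\Phi=\beta+(\Phi-\beta)$ with $\Phi-\beta$ vanishing at $0$, the product $m\Phi$ equals $-i\beta\,\sgn(\xi)|\xi|^{1+\alpha}$ plus terms I claim are locally more regular. Since $m$ is smooth and nonvanishing for $\xi\neq0$, the hypothesis $m\Phi\in H^{\frac32+\alpha}$ already forces $\Phi\in H^{\frac32+\alpha}$ away from $0$, so the entire analysis localizes at the origin.

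The main obstacle is precisely this last claim: it is a borderline product estimate, carried out at the threshold regularity $\frac32+\alpha$ at which the homogeneous symbol $\sgn(\xi)|\xi|^{1+\alpha}$ just fails to be locally $H^{\frac32+\alpha}$. Here I would invoke the paper's fractional Leibniz and commutator estimates (notably Lemma \ref{commA} for $\partial_x(1+D^\alpha)^{-1}$), exploiting that both $m$ and $\Phi-\beta$ vanish at the origin so that $i\xi(\Phi-\beta)$, the cross terms, and $\sgn(\xi)|\xi|^{1+\alpha}(\Phi-\beta)$ acquire enough extra vanishing to sit locally in $H^{\frac32+\alpha}$ despite the $\epsilon$-loss carried by $\Phi\in H^{(\frac32+\alpha)^-}$. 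Granting it, $m\Phi\in H^{\frac32+\alpha}$ forces $\beta\,\sgn(\xi)|\xi|^{1+\alpha}\in H^{\frac32+\alpha}_{\mathrm{loc}}$; but $\sgn(\xi)|\xi|^{1+\alpha}\in H^{\theta}_{\mathrm{loc}}$ only for $\theta<\frac32+\alpha$ (its spatial counterpart decays like $|x|^{-2-\alpha}$, exactly the borderline failing $L^2(|x|^{2(\frac32+\alpha)}\,dx)$, which explains the appearance of the exponent $\frac32+\alpha$), so $\beta=0$.

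For the final dichotomy, the identity gives $\widehat{u}(0,t_1)=-\frac{1}{t_2-t_1}\int_{t_1}^{t_2}\!\int u^2\le 0$, so $\int u(\cdot,t_1)\,dx\ge 0$ forces $\widehat{u}(0,t_1)=0$ and $\int_{t_1}^{t_2}\!\int u^2=0$, i.e. $u\equiv0$ on $[t_1,t_2]$; by uniqueness for \eqref{fBBM} (the zero solution being global) this propagates to $u\equiv0$ on $[t_1,\infty)$, while the converse is immediate.
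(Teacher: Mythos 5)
Your reduction is correct and in places cleaner than the paper's: integrating $\partial_t\hat u=-m(\xi)(\hat u+\widehat{u^2})$ directly in time avoids the Duhamel formula with the group $\{e^{tA}\}$ and hence part of the weighted group machinery of Proposition \ref{Propwightedineq}; the observation that $\hat u(0,t)$ is conserved, the reduction to $\beta=\Phi(0)=0$, the identification of $\sgn(\xi)|\xi|^{1+\alpha}$ as the obstruction (it lies in $H^{\theta}_{\mathrm{loc}}$ only for $\theta<\tfrac32+\alpha$, which is the content of Propositions \ref{prop3} and \ref{propuniquecon2}), and the final dichotomy all match the paper's logic.

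However, the step you yourself flag as ``the main obstacle'' is a genuine gap, and it is not a detail: proving that $i\xi(\Phi-\beta)$, $\sgn(\xi)|\xi|^{1+\alpha}(\Phi-\beta)$ and $m_{\mathrm{reg}}\Phi$ lie in $H^{\frac32+\alpha}$ near the origin, when $\Phi$ itself is only in $H^{(\frac32+\alpha)^-}$, is precisely the content of Proposition \ref{propuniquecon}, which occupies most of Section \ref{SectUnique} and rests on the Stein-derivative propositions \ref{prop3}--\ref{negativeleibnizprop}, the pointwise Leibniz rule \eqref{pointwise2}, the H\"older-type bound $|\Phi(\xi)-\beta|\lesssim|\xi|^{1+\alpha-\epsilon}$ (obtained in the paper by writing $\hat\varphi(\xi)-\hat\varphi(0)=\xi\int_0^1\partial_\xi\hat\varphi(\sigma\xi)\,d\sigma$), and the weighted group and interpolation estimates. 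Lemma \ref{commA}, which you name as the tool you would invoke, plays no role in this part; it is used only in the energy estimates for Theorem \ref{Main2}. Moreover, your plan to run the product estimates at regularity $\tfrac32+\alpha$ directly collides with a structural constraint: the characterization of Theorem \ref{theorem9b} and the Leibniz rule \eqref{pointwise2} for $\mathcal D^{b}$ are only available for $b\in(0,1)$, so all the borderline multiplications must be performed after peeling off one full $\partial_\xi$ derivative. That is exactly why the paper recasts the hypothesis as $\partial_\xi\hat u(t_j)\in H^{\frac12+\alpha}$ and tracks the singular symbol $a'(\xi)=\frac{1+(1-\alpha)|\xi|^\alpha}{(1+|\xi|^\alpha)^2}$, whose offending piece $|\xi|^\alpha$ is the $\xi$-derivative of your $\sgn(\xi)|\xi|^{1+\alpha}$. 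Carrying out your outline rigorously would therefore force you back into essentially the paper's framework, and until those estimates are supplied the proof is incomplete.
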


In the proof of the previous theorem we show the last equivalence with $u\equiv 0$ in $[t_1,t_2]$, however due to the uniqueness of local solutions of \eqref{fBBM} and the fact that the null solution is global, in the statement of Theorem \ref{Main3}, we have used that $u\equiv 0$ in $[t_1,t_2]$ if and only if $u\equiv 0$ in $[t_1,\infty)$.

Theorem \ref{Main3} establishes the maximum admissible polynomial decay rate for solutions of \eqref{fBBM} in $L^2$ weighted Sobolev spaces. To see this, consider a sufficiently regular initial data $\varphi \in L^{2}(|x|^{2(\frac{3}{2}+\alpha)}\, dx)\setminus\{0\}$ (e.g., $\varphi$ in the Schwartz class of functions) such that $\int \varphi(x)\, dx\geq 0$. Then by Theorem \ref{Main2}, there exists a time $T>0$ and a unique solution of \eqref{fBBM} such that $u\in C([0,T];Z_{s,(\frac{3}{2}+\alpha)^{-}})$, $s>s_{\alpha}$. However, Theorem \ref{Main3} implies that for all $T_1>0$,
\begin{equation*}
 u\notin L^{\infty}([0,T_1];L^{2}(|x|^{2(\frac{3}{2}+\alpha)}\, dx)). 
\end{equation*}
Consequently, our results of Theorem \ref{Main2} are sharp in the previous sense.

We also note that in the case of fKdV with dispersion $\alpha\in (-1,2)$, $\alpha\neq 0$ (see,  \cite{Ri,GFFLGP1,FoPo}), it is known that the maximum polynomial decay of solutions with arbitrary initial data in $Z_{s,r}$-spaces is $r=\frac{3}{2}+\alpha$, and assuming condition $\int \varphi(x)\, dx=0$ for the initial data, this extends to $r=\frac{5}{2}+\alpha$. Contrary to this, $\frac{3}{2}+\alpha$ is the maximum decay for solutions of fBBM. Thus, we observe that equation fBBM is more restrictive in weighted spaces than fKdV. On the other hand, the maximum decay for fKdV is obtained by deducing UCP at three different times (or two times and an extra condition). However, fBBM requires two-time conditions to obtain the analog UCP.  Finally, we note that the results obtained for fBBM with $\alpha\in (0,1)$ are compatible with those for the case $\alpha=1$ in \cite{FoRoSa}.

\begin{remark} (a) Assuming $\alpha\in (\frac{1}{2},1)$, our proof of Theorems \ref{Main2} and \ref{Main3} extend to any regularity $s>\frac{3}{2}-\alpha$, if one can guarantee local existence of solutions for the IVP \eqref{fBBM} in $H^{(\frac{3}{2}-\alpha)^{+}}(\mathbb{R})$. See Remark \ref{Remarkextension} below.

(b) The results of Theorems \ref{Main2} and \ref{Main3}, are easily extended to dispersions $\alpha>0$ with $\alpha \notin 2\mathbb{Z}$ in \eqref{fBBM}. For instance, we can use the standard LWP results for \eqref{fBBM} in $H^s(\mathbb{R})$, $s>\frac{3}{2}$, and our energy estimates and arguments developed below. Thus, for any range of dispersion $\alpha>0$ with $\alpha \notin 2\mathbb{Z}$, we obtain that $r=\frac{3}{2}+\alpha$ is the decay limit for solutions of fBBM in spaces $Z_{s,r}$.

(c) Given the generality of our estimates, our results also extend to the following generalized fBBM equation    
\begin{equation*}
\partial_t u+\partial_x u+ D^{\alpha}\partial_{t} u+\partial_x (u^k) = 0, \qquad t, x\in \R,    
\end{equation*}
where $k\in \mathbb{Z}^{+}$, $k\geq 3$, and $\alpha\in (0,1)$. However, in this case, the identity obtained in Theorem \ref{Main3} turns out to be
\begin{equation*}
  \int_{\mathbb{R}} u(x,t_1)\, dx=-\frac{1}{(t_2-t_1)}\int_{t_1}^{t_2}\int_{\mathbb{R}}(u(x,\tau))^k\, dx d\tau.
\end{equation*}
Hence, when $k$ is an even number, one has that  $\int u(x,t_1)\, dx \geq 0$ if and only if $u$ is identically zero on $[t_1,\infty)$. In particular, when $k$ is an even integer, the maximum polynomial decay rate for arbitrary initial condition is $\frac{3}{2}+\alpha$. However, we do not know if such a decay rate bound still holds with $k$ being an odd number. 
\end{remark}

This manuscript is organized as follows. In Section \ref{SectPrelim}, we detail preliminaries and estimates, which are fundamental to derive our main results, among which are commutator and fractional derivative estimates. In this part, we also derive Lemma \ref{commA}, which is a key result to obtain commutator estimates for the operator $A$ (see \eqref{Aoperator} below). In Section \ref{SectionLWP}, we prove the persistence result in weighted spaces detailed in Theorem \ref{Main2}. We continue with Section \ref{SectUnique}, in which we prove our unique continuation principles for solutions of \eqref{fBBM} at two different times, i.e., we deduce Theorem \ref{Main3}.

%%%%%%%%%%%%%%%%%%%%%%%%%%%%%%%%%%%%%%%%%%%%%%%%%%%%%%%%%%%%%%%%%%%%%%%%%%%%%%%%%%%%%%%%%%%%%%%%%%%%%%%%%%%%%%%%%%%%%%%%%%%%%%%%%%%%%%%%%%%%%%%%%%%%%

\subsection*{Notation} 
\begin{itemize}
\item $a\lesssim b$ means that there exists $C>0$ such that $a\leq Cb$. Similarly, we set $a\gtrsim b$. We will say that $a\sim b$, whenever $a\lesssim b$ and $b\lesssim a$. 
    \item $[B,C]=BC-CB$ denotes the commutator between operators $B$ and $C$.
    \item $\langle x\rangle=(1+|x|^2)^{\frac{1}{2}}$, $x\in \mathbb{R}$.
    \item   $\|\cdot\|_{L^p}$ denote the usual norm of the Lebesgue space $L^p(\mathbb{R})$, $1\leq p \leq \infty$. Since several of your estimates focus on the space $L^2(\mathbb{R})$, we denote its norm as $\|\cdot\|_2=\|\cdot\|_{L^2}$. 
    \item $\wedge$ denotes the Fourier transform, $\vee$ denotes the inverse Fourier transform.
    \item Given $s\in \mathbb{R}$, $D^sf=(|\xi|^s\widehat{f}(\xi))^{\vee}$, and $J^sf=(1-\partial_x^2)^{\frac{s}{2}}f=\big(\langle \xi \rangle^s\widehat{f}(\xi)\big)^{\vee}$.
    \item In all paper, 
    \begin{equation}\label{Aoperator}
A =-\partial_x(1+D^\alpha)^{-1}.    
\end{equation}
\item $H^s(\mathbb{R})$
 denotes the Sobolev space of order $s$.
 \item $\|\cdot\|_{s,2}=\|J^s( \cdot)\|_{L^2}$ denotes the $H^s$-norm.
 \item $Z_{s,r}=H^s(\mathbb{R})\cap L^2(|x|^{2r}\,dx).$ 

 \item  $\{e^{tA}\}$ is the one-parameter unitary operator generated by $A$. 

\item Since $\{e^{tA}\}$ is the linear group of solutions of the equation $\partial_t u-Au=0$. It follows that the integral formulation of \eqref{fBBM} is 
\begin{equation}\label{IE}
 u(t)=e^{tA}\varphi+\int_0^t e^{(t-\tau)A}A( u^2)(\tau)\, d\tau.
\end{equation}
 
 \end{itemize}

%%%%%%%%%%%%%%%%%%%%%%%%%%%%%%%%%%%%%%%%%%%%%%%%%%%%%%%%%%%%%%%%%%%%%%%%%%%%%%%%%%%%%%%%%%%%%%%%%%%%%%%%%%%%%%%%%%%%%%%%%%%%%%%%%%%%%%%%%%%%%%%%%%%%%%%%%%%%%%%%%%%%%%%%%%%%%%%%%%%%%%%%%%%%%%%%%%%%%%%%%%%%%%%%%%%%%%

\section{Preliminary results}\label{SectPrelim}

In order to obtain group estimates, next lemma provides some formulae for derivatives of the unitary group $\{e^{tA}\}$ associated to the fBBM equation in Fourier space, which easily follows from a direct computation.  

\begin{proposition}\label{derivatives} 
Let $F(t,\xi)= e^{-ia(\xi)t}$, where $a(\xi)=\dfrac{\xi}{1+|\xi|^\alpha}.$ Then,
\begin{align}&\partial_{\xi}F(t,\xi)  =-it\frac{1+(1-\alpha)|\xi|^\alpha}{(1+|\xi|^\alpha)^{2}}F(t,\xi),  \label{der1}\\
&\partial_{\xi}^{2}F(t,\xi) =(-it)^2 \frac{(1+(1-\alpha)|\xi|^\alpha)^2}{(1+|\xi|^\alpha)^{4}}F(t,\xi) \nonumber\\ 
&+it\frac{\alpha(\alpha+1)|\xi|^{\alpha-1}\sgn(\xi)}{(1+|\xi|^\alpha)^{3}}F(t,\xi)+it\frac{\alpha(1-\alpha)|\xi|^{2\alpha-1}\sgn(\xi)}{(1+|\xi|^\alpha)^{3}}F(t,\xi).  \label{der2}
\end{align}
\end{proposition}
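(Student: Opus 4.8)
The plan is to reduce the entire statement to computing the first two derivatives of the symbol $a(\xi)=\xi/(1+|\xi|^\alpha)$ and then propagating them through the exponential by the chain rule. Writing $F(t,\xi)=e^{-ia(\xi)t}$, differentiation in $\xi$ gives at once
\begin{equation*}
\partial_\xi F(t,\xi)=-it\,a'(\xi)\,F(t,\xi),\qquad \partial_\xi^2 F(t,\xi)=\big((-it)^2(a'(\xi))^2-it\,a''(\xi)\big)F(t,\xi),
\end{equation*}
so the whole content of the proposition is the identification of $a'(\xi)$ with the coefficient appearing in \eqref{der1}, and of $-it\,a''(\xi)$ with the last two summands of \eqref{der2} (the first summand there being precisely $(-it)^2(a'(\xi))^2F$).

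First I would handle $a'$ by the quotient rule applied to $a=\xi/(1+|\xi|^\alpha)$, using the two elementary facts $\frac{d}{d\xi}|\xi|^\alpha=\alpha|\xi|^{\alpha-1}\sgn(\xi)$ and $\xi\,\sgn(\xi)=|\xi|$. The numerator then collapses to $(1+|\xi|^\alpha)-\alpha|\xi|^\alpha=1+(1-\alpha)|\xi|^\alpha$, which yields $a'(\xi)=\frac{1+(1-\alpha)|\xi|^\alpha}{(1+|\xi|^\alpha)^2}$ and hence \eqref{der1}.

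Next I would differentiate $a'$ once more, again by the quotient rule. The efficient route is to factor the common term $\alpha|\xi|^{\alpha-1}\sgn(\xi)(1+|\xi|^\alpha)$ out of the resulting numerator and cancel one power of $(1+|\xi|^\alpha)$ against the denominator, leaving the bracket $(1-\alpha)(1+|\xi|^\alpha)-2\big(1+(1-\alpha)|\xi|^\alpha\big)$. This simplifies to $-(1+\alpha)-(1-\alpha)|\xi|^\alpha$; splitting it into its two terms and multiplying by $-it$ produces exactly the two remaining summands of \eqref{der2}.

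The computation is routine, so there is no real obstacle beyond bookkeeping. The points needing care are tracking the $\sgn(\xi)$ factors and recalling $|\xi|^{\alpha-1}\cdot|\xi|^\alpha=|\xi|^{2\alpha-1}$ in the final term, together with the observation that, since $|\xi|^\alpha$ fails to be twice differentiable at the origin for $\alpha\in(0,1)$, the identity \eqref{der2} is to be read for $\xi\neq 0$, whereas \eqref{der1} extends continuously to $\xi=0$ (where the coefficient equals $-it$, since $|\xi|^\alpha\to 0$).
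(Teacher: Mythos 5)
Your computation is correct and is exactly the direct calculation the paper invokes (the paper offers no written proof, stating only that the formulae "easily follow from a direct computation"). Your added remark that \eqref{der2} should be read for $\xi\neq 0$ while \eqref{der1} extends continuously to the origin is a sensible clarification consistent with how the proposition is used later.
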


We will use the following fractional Leibniz rules established in \cite{KatPoc}, \cite[Theorem 1]{GrafakosOh2014}, \cite{BoLi} for the $L^\infty$ endpoint inequality, and \cite{OhWu2020}  for the $L^1$ endpoint inequality with $p=1=p_2=p_4$.  We also refer to \cite{KenigPonceVega1993}.

\begin{lemma}\label{leibnizhomog}
Let $p\in[1,\infty]$, $s\geq 0$ with $s$ not an odd integer in the case $p = 1$. Then
\begin{equation}
    \|D^s(fg)\|_{L^p} \lesssim \|f\|_{L^{p_1}}\|D^sg\|_{L^{p_2}} + \|g\|_{L^{p_3}}\|D^sf\|_{L^{p_4}}
\end{equation}
and
\begin{equation}
    \|J^s(fg)\|_{L^p} \lesssim \|f\|_{L^{p_1}}\|J^sg\|_{L^{p_2}} + \|g\|_{L^{p_3}}\|J^sf\|_{L^{p_4}}
\end{equation}
with $ \frac{1}{p} = \frac{1}{p_1} + \frac{1}{p_2} = \frac{1}{p_3} + \frac{1}{p_4}$.
\end{lemma}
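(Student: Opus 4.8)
The plan is to prove both inequalities by a Littlewood--Paley (Bony paraproduct) decomposition, reducing them to standard square-function and maximal-function estimates. I would concentrate on the homogeneous version involving $D^s$, since the inhomogeneous bound for $J^s$ follows by an entirely parallel argument in which the multiplier $\ji\xi\jd^s$ is smooth across the origin (so the low-frequency subtleties below never arise, and the piece at frequencies $\lesssim 1$ is trivial). Let $P_j$ denote the Littlewood--Paley projection onto $|\xi|\sim 2^j$ and $S_j=\sum_{k<j}P_k$ the projection onto $|\xi|\lesssim 2^j$. Bony's decomposition writes
\begin{equation*}
fg=\sum_j (S_{j-2}f)(P_jg)+\sum_j (S_{j-2}g)(P_jf)+\sum_{|j-k|\le 1}(P_jf)(P_kg)=:T_fg+T_gf+R(f,g),
\end{equation*}
and it suffices to bound $D^s$ applied to each of the three pieces.

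For the low-high paraproduct $T_fg$, each summand $(S_{j-2}f)(P_jg)$ has Fourier support in $|\xi|\sim 2^j$, so these supports overlap boundedly and $D^s(T_fg)$ behaves like $\sum_j 2^{js}(S_{j-2}f)(P_jg)$. Using the square-function characterization of $\|D^s(\cdot)\|_{L^p}$ together with the pointwise bound $\sup_j|S_{j-2}f|\lesssim Mf$ (the Hardy--Littlewood maximal function), one gets
\begin{equation*}
\|D^s(T_fg)\|_{L^p}\lesssim \Big\|(Mf)\Big(\sum_j|2^{js}P_jg|^2\Big)^{1/2}\Big\|_{L^p}\lesssim \|Mf\|_{L^{p_1}}\Big\|\Big(\sum_j|2^{js}P_jg|^2\Big)^{1/2}\Big\|_{L^{p_2}},
\end{equation*}
and the maximal bound $\|Mf\|_{L^{p_1}}\lesssim\|f\|_{L^{p_1}}$ (for $p_1>1$) with the identity $\|(\sum_j|2^{js}P_jg|^2)^{1/2}\|_{L^{p_2}}\sim\|D^sg\|_{L^{p_2}}$ (for $1<p_2<\infty$) yields the term $\|f\|_{L^{p_1}}\|D^sg\|_{L^{p_2}}$. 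By symmetry $T_gf$ produces $\|g\|_{L^{p_3}}\|D^sf\|_{L^{p_4}}$.

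For the resonant (high-high) term $R(f,g)$ the output frequency is $\lesssim 2^j$ while both inputs sit at $|\xi|\sim 2^j$, so $D^s$ costs at most $2^{js}$ and may be placed on whichever factor we like; putting it on $f$, Cauchy--Schwarz in the frequency index followed by Hölder gives
\begin{equation*}
\|D^sR(f,g)\|_{L^p}\lesssim \Big\|\Big(\sum_j|2^{js}P_jf|^2\Big)^{1/2}\Big(\sum_k|\widetilde P_kg|^2\Big)^{1/2}\Big\|_{L^p}\lesssim \|D^sf\|_{L^{p_4}}\|g\|_{L^{p_3}},
\end{equation*}
with $\widetilde P_k=P_{k-1}+P_k+P_{k+1}$; the freedom in the choice of factor is exactly what lets this be absorbed into either admissible term on the right. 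Combining the three bounds proves both inequalities in the interior range $1<p,p_i<\infty$.

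The main obstacle is the endpoints, which is precisely where the stated hypotheses enter. At $p=\infty$ (and the corresponding $p_i=\infty$) the square-function and maximal-function estimates used above fail, and one must substitute a $BMO$/duality argument; this is the content of the endpoint treatment cited as \cite{BoLi}. At $p=1$ the relevant bilinear symbol must be the Fourier transform of a finite measure, and this property breaks down exactly when $s$ is an odd integer because of the corner of $|\xi|^s$ at the origin; the refinement permitting all non-odd-integer $s$ at the $L^1$ endpoint is due to Oh--Wu \cite{OhWu2020}. Since these endpoint cases are fully developed in \cite{KatPoc,GrafakosOh2014,BoLi,OhWu2020}, I would invoke them directly, the interior paraproduct argument above being the model those works refine.
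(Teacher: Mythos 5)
The paper offers no proof of this lemma at all: it is invoked as a known result, quoted from \cite{KatPoc}, \cite[Theorem 1]{GrafakosOh2014}, \cite{BoLi} for the $L^\infty$ endpoint, and \cite{OhWu2020} for the $L^1$ endpoint with $s$ not an odd integer. Your Bony-paraproduct sketch is the standard interior-range ($1<p,p_i<\infty$) argument that those works refine, and since you correctly identify the two endpoint cases as the genuine difficulties and defer them to exactly the references the paper cites, your proposal is correct and in substance matches the paper's treatment.
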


\subsection{Commutator estimates} Next, we introduce some key commutator estimates. 

\begin{lemma}\label{dmp1} Let $\hil$ denote
the Hilbert transform. Then for any $p\in (1,\infty)$ and any $l, m
\in \Z^{+}\cup\{0\}$ there exists $c = c(p; l; m) > 0$ such that
\begin{equation}\label{c-dmp1}
\|\partial_x^l [\hil;\psi]\partial_x^m f\|_{L^p}\le
c\,\|\partial_x^{m+l}\psi\|_{L^{\infty}}\|f\|_{L^p}.
\end{equation}
\end{lemma}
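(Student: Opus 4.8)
The plan is to view $W:=\partial_x^l[\hil;\psi]\partial_x^m$ as a singular integral operator, to reduce it by purely algebraic commutator identities to a \emph{one-sided} operator, and then to identify the latter with a Calder\'on commutator whose $L^p$-bound carries exactly $l+m$ derivatives of $\psi$. First I would exploit that $\hil$ commutes with $\partial_x$, which yields the two identities
\begin{equation*}
\partial_x[\hil;\psi]=[\hil;\psi']+[\hil;\psi]\partial_x,\qquad [\hil;\psi]\partial_x=\partial_x[\hil;\psi]-[\hil;\psi'].
\end{equation*}
Iterating the second one to push all the right derivatives to the left gives, on Schwartz functions,
\begin{equation*}
\partial_x^l[\hil;\psi]\partial_x^m=\sum_{k=0}^{m}(-1)^k\binom{m}{k}\,\partial_x^{\,l+m-k}[\hil;\psi^{(k)}].
\end{equation*}
Since $(\psi^{(k)})^{(l+m-k)}=\psi^{(l+m)}$, this reduces \eqref{c-dmp1} to the one-sided estimate $\|\partial_x^{j}[\hil;\phi]f\|_{L^p}\lesssim\|\phi^{(j)}\|_{L^\infty}\|f\|_{L^p}$, in which only the total derivative count $j$ matters; the case $j=0$ is immediate from the $L^p$-boundedness of $\hil$ and H\"older's inequality, with constant $\|\phi\|_{L^\infty}$.

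Next I would compute the Schwartz kernel of $\partial_x^{j}[\hil;\phi]$. From $[\hil;\phi]f(x)=-\tfrac1\pi\,\mathrm{p.v.}\!\int \frac{\phi(x)-\phi(y)}{x-y}f(y)\,dy$ and the Leibniz rule applied to the product $(\phi(x)-\phi(y))\cdot(x-y)^{-1}$, the kernel is a constant multiple of
\begin{equation*}
\partial_x^{j}\Big[\frac{\phi(x)-\phi(y)}{x-y}\Big]=(-1)^jj!\,\frac{\phi(x)-\phi(y)}{(x-y)^{j+1}}+\sum_{a=1}^{j}\binom{j}{a}(-1)^{j-a}(j-a)!\,\frac{\phi^{(a)}(x)}{(x-y)^{j-a+1}}.
\end{equation*}
The individual summands are too singular on the diagonal, so the key is that Taylor expanding $\phi(x)$ about $y$ to order $j-1$ exhibits the cancellation of all singularities of order $\ge 2$: the surviving kernel is of first Calder\'on--Zygmund order, of size $O(\|\phi^{(j)}\|_{L^\infty}|x-y|^{-1})$, and splits into (i) terms of the form $\phi^{(j)}(x)(x-y)^{-1}$, producing $\phi^{(j)}\,\hil f$, and (ii) a Calder\'on-commutator kernel $\frac{\phi(x)-\sum_{a<j}\frac{\phi^{(a)}(y)}{a!}(x-y)^{a}}{(x-y)^{j+1}}$ whose numerator is a $j$-th order divided difference of $\phi$.

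Boundedness of the type-(i) operator is immediate from the $L^p$-boundedness of $\hil$, with constant $\|\phi^{(j)}\|_{L^\infty}$. For the type-(ii) Calder\'on commutator I would invoke the Calder\'on commutator theorem (Calder\'on; Coifman--McIntosh--Meyer), which yields $L^p$-boundedness for $1<p<\infty$ with operator norm controlled by $\|\phi^{(j)}\|_{L^\infty}$. The main obstacle lies precisely here: a direct Calder\'on--Zygmund argument based on the pointwise size and smoothness of this kernel would cost one extra derivative, since verifying the H\"ormander regularity condition differentiates the divided difference and produces $\|\phi^{(j+1)}\|_{L^\infty}$; thus the sharp count $\|\phi^{(j)}\|_{L^\infty}$ cannot be read off from kernel bounds alone and genuinely depends on the cancellation built into the commutator. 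It then remains to dispatch the routine technical points---justifying the integrations by parts and the differentiation under the integral for Schwartz data (the boundary terms vanish), interpreting every singular integral in the principal-value sense, and extending the resulting $L^p\to L^p$ estimate from a dense class to all of $L^p$ by density---which together recover \eqref{c-dmp1} with a constant $c=c(p;l;m)$.
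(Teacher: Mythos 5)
Your proposal is essentially correct in outline, but note that the paper itself offers no proof of this lemma: it simply records that the case $l+m=1$ is Calder\'on's classical commutator estimate \cite{Ca} and refers to Lemma 3.1 of \cite{DaMcPo} for the general case. Your argument is, as far as it goes, the standard route behind that reference: the identity $[\hil;\psi]\partial_x=\partial_x[\hil;\psi]-[\hil;\psi']$ and its iterate correctly reduce \eqref{c-dmp1} to the one-sided bound $\|\partial_x^{j}[\hil;\phi]f\|_{L^p}\lesssim\|\phi^{(j)}\|_{L^\infty}\|f\|_{L^p}$ with the right derivative count, your kernel computation is right, and the regrouping of the singular summands into $c\,\phi^{(j)}\hil$ plus a Taylor-remainder kernel of size $O(\|\phi^{(j)}\|_{L^\infty}|x-y|^{-1})$ does work out (the full sum is exactly $(-1)^jj!$ times the order-$j$ Taylor remainder of $\phi$ divided by $(x-y)^{j+1}$, and peeling off the top Taylor term produces the Hilbert-transform piece). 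You also correctly identify the genuine difficulty: the H\"ormander regularity of the remaining kernel costs $\phi^{(j+1)}$, so the sharp bound cannot come from size and smoothness of the kernel alone.

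The one soft spot is the attribution of the final black box. The operator you are left with, with kernel $\bigl(\phi(x)-\sum_{a<j}\tfrac{\phi^{(a)}(y)}{a!}(x-y)^a\bigr)(x-y)^{-j-1}$, is \emph{not} the classical $j$-th Calder\'on commutator $C_j[A]$, whose kernel is $(A(x)-A(y))^j(x-y)^{-j-1}$ and whose norm is controlled by $\|A'\|_{L^\infty}^j$; for $j\ge 2$ these are different operators, and the Calder\'on/Coifman--McIntosh--Meyer theorem as usually stated does not literally apply. The $L^p$-boundedness of the Taylor-remainder singular integral with norm $\lesssim\|\phi^{(j)}\|_{L^\infty}$ (indeed $\lesssim\|\phi^{(j)}\|_{BMO}$) is a known but separate result, due to Cohen and Cohen--Gosselin and obtainable from the Coifman--Meyer multilinear theory or the $T1$ theorem. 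So your proof is complete modulo replacing that citation with the correct one (or simply with \cite[Lemma 3.1]{DaMcPo}, which is what the paper does); as written, the hardest step is delegated to a theorem about a different family of operators.
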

The case $l+m=1$ in Lemma \ref{dmp1} is the classical Calderon's commutator estimate, \cite{Ca}, for the general case see Lemma 3.1 in \cite{DaMcPo}. We will also use the following commutator estimate deduced in \cite{Ponce,KatPoc} and Theorem 1.9 in \cite{Li}.

\begin{lemma}\label{dmp2}
Let $\alpha \in [0, 1)$, $\beta\in(0, 1)$ with $\alpha+\beta \in [0,
1]$. Then for any $p \in  (1,\infty)$ there exists $c = c(\alpha;\beta; p) > 0$ such that
\begin{equation}\label{OR2}
\|D^{\alpha} [D^{\beta}; \psi]D^{1-(\alpha+\beta)} f\|_{L^p}\le
c\,\|\partial_x \psi\|_{L^{\infty}} \|f\|_{L^p},
\end{equation}
\end{lemma}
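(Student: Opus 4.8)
The plan is to realize the operator $Tf := D^{\alpha}[D^{\beta};\psi]D^{1-(\alpha+\beta)}f$ as a bilinear Fourier multiplier acting on the pair $(\partial_x\psi, f)$ and then to invoke Coifman--Meyer type bounds. Passing to Fourier variables and writing $g=D^{1-(\alpha+\beta)}f$, a direct computation gives $\widehat{[D^{\beta};\psi]g}(\xi)=\int_{\R}\widehat{\psi}(\xi-\eta)\big(|\xi|^{\beta}-|\eta|^{\beta}\big)\widehat{g}(\eta)\,d\eta$, so that after applying $D^{\alpha}$ on the left and inserting $\widehat{g}(\eta)=|\eta|^{1-(\alpha+\beta)}\widehat{f}(\eta)$ one obtains $\widehat{Tf}(\xi)=\int_{\R}m_0(\xi,\eta)\widehat{\psi}(\xi-\eta)\widehat{f}(\eta)\,d\eta$ with $m_0(\xi,\eta)=|\xi|^{\alpha}\big(|\xi|^{\beta}-|\eta|^{\beta}\big)|\eta|^{1-(\alpha+\beta)}$. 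Because the bound must be in terms of $\|\partial_x\psi\|_{L^{\infty}}$ rather than $\psi$ itself, I would factor out one derivative from $\psi$: since $\widehat{\partial_x\psi}(\zeta)=i\zeta\,\widehat{\psi}(\zeta)$, the operator becomes $T=B_{m}(\partial_x\psi,f)$, the bilinear multiplier with symbol $m(\xi,\eta)=m_0(\xi,\eta)/\big(i(\xi-\eta)\big)$.

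Next I would check that $m$ is homogeneous of degree $0$: the numerator $m_0$ is homogeneous of degree $\alpha+\beta+(1-\alpha-\beta)=1$ and the denominator of degree $1$, so $m(\lambda\xi,\lambda\eta)=m(\xi,\eta)$ for $\lambda>0$. The goal then reduces to the bilinear estimate $\|B_m(b,f)\|_{L^p}\lesssim\|b\|_{L^{\infty}}\|f\|_{L^p}$, $1<p<\infty$, applied with $b=\partial_x\psi$. To prove this I would use a Littlewood--Paley/paraproduct decomposition, splitting frequency space into the diagonal region $|\xi-\eta|\lesssim|\eta|$ (equivalently $|\xi|\sim|\eta|$) and the off--diagonal region $|\xi-\eta|\gtrsim|\eta|$. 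On the off--diagonal pieces the denominator $\xi-\eta$ is comparable to the largest frequency and $m$ is a standard Coifman--Meyer symbol on each dyadic block; these are the high--low and low--high paraproducts, bounded into $L^p$ with the $L^{\infty}$ factor exactly as in the Coifman--Meyer theorem.

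The main difficulty, and the heart of the estimate, is the diagonal region, where the factor $1/(\xi-\eta)$ is singular. Here the essential cancellation comes from the mean value bound $\big||\xi|^{\beta}-|\eta|^{\beta}\big|\lesssim|\xi-\eta|\,\max(|\xi|,|\eta|)^{\beta-1}$, valid for $0<\beta\le 1$ when $\xi,\eta$ lie on the same side of the origin (while $|\xi-\eta|\sim|\xi|+|\eta|$ when they do not, which throws one back to the off--diagonal case). This cancels the singular denominator and yields, on each dyadic annulus $|\xi|\sim|\eta|\sim 2^{j}$, the bound $|m(\xi,\eta)|\lesssim 2^{j\alpha}2^{j(\beta-1)}2^{j(1-\alpha-\beta)}=1$, together with the analogous $(|\xi|+|\eta|)^{-|\gamma|}$ estimates for the derivatives $\partial^{\gamma}m$ needed to verify the Mikhlin/Coifman--Meyer conditions on that block; the non-smoothness of $|\cdot|^{\alpha}$, $|\cdot|^{\beta}$, $|\cdot|^{1-(\alpha+\beta)}$ on the frequency axes is harmless after the dyadic localization, since on each annulus these symbols are smooth and homogeneous. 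Summing the dyadic contributions and handling the $L^{\infty}$ endpoint through the usual Coifman--Meyer argument (or, equivalently, a $BMO$ duality for the high--high interaction) closes the estimate. The constraints $\alpha\in[0,1)$, $\beta\in(0,1)$, $\alpha+\beta\le 1$ are precisely what keep all exponents in $[0,1)$ so that these elementary bounds, and in particular the mean value cancellation, hold.
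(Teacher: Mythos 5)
Your strategy is sound, but note first that the paper does not actually prove Lemma \ref{dmp2}: it imports it from \cite[Proposition 3.10]{Li} (see also \cite[Proposition 3.2]{DaMcPo}), so the relevant comparison is with those cited proofs. Your reduction to the bilinear multiplier $B_m(\partial_x\psi,f)$ with the degree-zero homogeneous symbol $m(\xi,\eta)=|\xi|^{\alpha}\big(|\xi|^{\beta}-|\eta|^{\beta}\big)|\eta|^{1-(\alpha+\beta)}/\big(i(\xi-\eta)\big)$, followed by a paraproduct decomposition and the mean-value cancellation $\big||\xi|^{\beta}-|\eta|^{\beta}\big|\lesssim|\xi-\eta|\max(|\xi|,|\eta|)^{\beta-1}$ near the diagonal, is essentially the route taken in \cite{Li}; it is the modern replacement for the more hands-on reduction to Calder\'on's commutator \cite{Ca} suggested by \cite{DaMcPo}. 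Two points in your sketch carry more weight than you give them. First, $m$ is not a Coifman--Meyer symbol globally: its derivatives degenerate along the axes $\xi=0$ and $\eta=0$ like $|\xi|^{-1}$ and $|\eta|^{-1}$ rather than $(|\xi|+|\eta|)^{-1}$, and in the high--low regime $|\eta|\ll|\xi|\sim|\xi-\eta|$ the dyadically localized pieces have size $(|\eta|/|\xi|)^{1-\alpha-\beta}$, which at the admissible endpoint $\alpha+\beta=1$ does not decay; hence ``summing the dyadic contributions'' cannot be done by absolute summation and genuinely requires the paraproduct estimates with an $L^{\infty}$ (equivalently $BMO$) input in the $\partial_x\psi$ slot, i.e.\ the high--low and high--high bounds $\|\sum_j\Delta_jb\,S_jf\|_{L^p}+\|\sum_j\Delta_jb\,\widetilde{\Delta}_jf\|_{L^p}\lesssim\|b\|_{BMO}\|f\|_{L^p}$, which are the nontrivial ingredients. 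Second, your ``diagonal region $|\xi-\eta|\lesssim|\eta|$'' is not equivalent to $|\xi|\sim|\eta|$ (it also contains the high--high interaction with low output frequency $|\xi|\ll|\eta|$), although the mean-value bound you quote does remain valid there. Neither point breaks the argument---you do invoke paraproducts and the $BMO$ duality---but they are where the actual proof lives, so as written this is a correct strategy consistent with the cited source rather than a self-contained proof.
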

See the proof of Lemma \ref{dmp2}  in \cite[Proposition 3.10]{Li} (see also Proposition 3.2 in \cite{DaMcPo}). 

Using \eqref{OR2}, we can derive the following fundamental commutator estimate for the operator $A$.
\begin{lemma}\label{commA}
Let $\alpha \in (0, 1]$, $1<p<\infty$, $f\in L^p(\mathbb{R})$ and let $g$ be a measurable function such that $\partial_x g\in L^{\infty}(\mathbb{R})$. Then we have the nice commutator estimate for the operator $A=-\partial_x (1+D^{\alpha})^{-1}$ 
\begin{equation}\label{comA}
\|[A,g]f\|_{L^p}\lesssim \|\partial_x g\|_{L^\infty} \|f\|_{L^p}.
\end{equation}
\end{lemma}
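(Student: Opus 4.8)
The plan is to strip the derivative $\partial_x$ and the resolvent $(1+D^{\alpha})^{-1}$ off $A$ one factor at a time, using the Leibniz rule for commutators $[BC,g]=B[C,g]+[B,g]C$, and then reduce everything to the single homogeneous commutator $D^{1-\alpha}[D^{\alpha},g]$, which is precisely the object controlled by Lemma \ref{dmp2}. Writing $A=-\partial_x(1+D^{\alpha})^{-1}$ and applying the Leibniz rule with $B=-\partial_x$ and $C=(1+D^{\alpha})^{-1}$ gives
\[
[A,g]=-\partial_x\,[(1+D^{\alpha})^{-1},g]+[-\partial_x,g]\,(1+D^{\alpha})^{-1},
\]
and I would estimate the two summands separately. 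The second is elementary: $[-\partial_x,g]$ is multiplication by $-\partial_x g$, so this term is $-(\partial_x g)(1+D^{\alpha})^{-1}$, and since $(1+|\xi|^{\alpha})^{-1}$ satisfies the Mikhlin--H\"ormander condition, $(1+D^{\alpha})^{-1}$ is bounded on $L^p$ for $1<p<\infty$; its $L^p$ norm is therefore $\lesssim \|\partial_x g\|_{L^{\infty}}\|f\|_{L^p}$ with no further work.

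For the first summand I would use the resolvent commutator identity
\[
[(1+D^{\alpha})^{-1},g]=-(1+D^{\alpha})^{-1}[D^{\alpha},g](1+D^{\alpha})^{-1},
\]
which is valid because $[(1+D^{\alpha}),g]=[D^{\alpha},g]$; this turns the first term into $\partial_x(1+D^{\alpha})^{-1}[D^{\alpha},g](1+D^{\alpha})^{-1}$. The crux is to absorb the order-$(1-\alpha)$ growth of $\partial_x(1+D^{\alpha})^{-1}$. I would factor
\[
\partial_x(1+D^{\alpha})^{-1}=m_1(D)\,D^{1-\alpha},\qquad m_1(\xi)=i\,\sgn(\xi)\,|\xi|^{\alpha}(1+|\xi|^{\alpha})^{-1},
\]
where a direct check shows that $m_1$ extends continuously through $\xi=0$ and satisfies $|\xi\,m_1'(\xi)|\lesssim 1$, so $m_1(D)$ is $L^p$-bounded by Mikhlin--H\"ormander. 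This gives
\[
\partial_x(1+D^{\alpha})^{-1}[D^{\alpha},g](1+D^{\alpha})^{-1}=m_1(D)\,\big(D^{1-\alpha}[D^{\alpha},g]\big)(1+D^{\alpha})^{-1},
\]
and now Lemma \ref{dmp2}, applied with first exponent $1-\alpha\in[0,1)$, second exponent $\alpha\in(0,1)$ and $(1-\alpha)+\alpha=1$, yields $\|D^{1-\alpha}[D^{\alpha},g]h\|_{L^p}\lesssim \|\partial_x g\|_{L^{\infty}}\|h\|_{L^p}$ for any $h$. Taking $h=(1+D^{\alpha})^{-1}f$ and composing with the two bounded multipliers $m_1(D)$ and $(1+D^{\alpha})^{-1}$ closes the estimate.

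The main obstacle — and the step that dictates the whole argument — is the balancing in the factorization. Writing the sandwich in the form $D^{a}[D^{\alpha},g]D^{1-a-\alpha}$ required by Lemma \ref{dmp2}, the remaining left multiplier $\partial_x(1+D^{\alpha})^{-1}D^{-a}$ has order $1-\alpha-a$ at infinity, forcing $a\ge 1-\alpha$, while the hypotheses of Lemma \ref{dmp2} force $a\le 1-\alpha$; hence $a=1-\alpha$ is the \emph{only} admissible choice, and it conveniently leaves $(1+D^{\alpha})^{-1}$ on the right. Two minor points remain. First, the homogeneous factor $D^{1-\alpha}$ is harmless at low frequency precisely because $m_1$ is continuous through $\xi=0$. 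Second, since $g$ is only assumed measurable with $\partial_x g\in L^{\infty}$, I would establish the identities and bounds for smooth $g$ first and then pass to the limit, which is legitimate because every constant above depends on $g$ only through $\|\partial_x g\|_{L^{\infty}}$. Finally, for the endpoint $\alpha=1$ admitted in the statement, Lemma \ref{dmp2} does not apply (its second exponent must be $<1$); there I would instead write $D=\hil\partial_x$ and bound $[D,g]=\hil(\partial_x g)+[\hil,g]\partial_x$ using the $L^p$-boundedness of $\hil$ together with the Calder\'on commutator estimate of Lemma \ref{dmp1} with $l=0$, $m=1$.
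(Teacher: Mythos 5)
Your proof is correct, and it reaches the same key estimate as the paper --- Lemma \ref{dmp2} applied to the commutator $[D^{\alpha},g]$ sandwiched between a $D^{1-\alpha}$ and a resolvent --- but by a genuinely different decomposition. The paper does not use the operator Leibniz rule or the resolvent commutator identity at all: it expands $[A,g]f=A(gf)-gAf$ directly and exploits the algebraic cancellation $(1+D^{\alpha})A=-\partial_x$ to collapse everything to
$[A,g]f=-(1+D^{\alpha})^{-1}\big(\partial_x g\,f+[D^{\alpha},g]Af\big)$,
after which the outer resolvent is handled by Young's inequality with the $L^1$ kernel $G_{\alpha}$, and the inner term is written as $[D^{\alpha},g]D^{1-\alpha}\,\hil D^{\alpha}(1+D^{\alpha})^{-1}f$, i.e.\ Lemma \ref{dmp2} is invoked with exponents $(0,\alpha,1-\alpha)$ rather than your $(1-\alpha,\alpha,0)$ --- the loss of $1-\alpha$ derivatives is placed to the \emph{right} of the commutator (inside $Af$) instead of to the left. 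Your route buys a cleaner conceptual structure (Leibniz plus resolvent identity, every remaining factor a Fourier multiplier) at the cost of invoking the Mikhlin--H\"ormander theorem for $m_1(\xi)=i\,\sgn(\xi)|\xi|^{\alpha}(1+|\xi|^{\alpha})^{-1}$, which you verify correctly; the paper's route needs only the $L^1$ bound on $G_{\alpha}$ and the $L^p$-boundedness of $\hil$, and its final identity $D^{\alpha}(1+D^{\alpha})^{-1}f=f-G_{\alpha}\ast f$ does the work of your multiplier $m_1$. Your observation that $a=1-\alpha$ is forced is only forced \emph{within your decomposition}; the paper's arrangement shows the other endpoint $a=0$ is equally viable once the outer $\partial_x$ has been cancelled algebraically. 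Finally, your separate treatment of $\alpha=1$ via Calder\'on's commutator (Lemma \ref{dmp1}) is a genuine improvement: the paper's own application of Lemma \ref{dmp2} takes $\beta=\alpha$ and therefore, as written, covers only $\alpha\in(0,1)$ even though the lemma is stated for $\alpha\in(0,1]$.
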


\begin{proof}
By opening the commutator and factorizing the operator $(1+D^{\alpha})^{-1}$, we distribute the derivative $\partial_x$ to obtain
\begin{equation}\label{eq1}
\begin{aligned}
\big[A,g \big]f
=&A(gf)-gA f\\
=&-(1+D^{\alpha})^{-1}\Big(\partial_x gf+g\partial_x f+g Af+D^{\alpha}(g Af) \Big).
\end{aligned}    
\end{equation}
Writing
\begin{equation*}
\begin{aligned}
D^{\alpha}\big(g Af\big)=[D^{\alpha},g] A f+g D^{\alpha} A f,   
\end{aligned}    
\end{equation*}
and plugging it in the previous identity  \eqref{eq1}, we arrive at
\begin{equation}\label{eq2}
\begin{aligned}
\big[A, & g \big]f
\\
=&-(1+D^{\alpha})^{-1}\Big(\partial_x gf+g\partial_x f+g (1+D^{\alpha}) Af+[D^{\alpha},g] Af\Big)\\
=&-(1+D^{\alpha})^{-1}\Big(\partial_x gf+[D^{\alpha},g] A f\Big),
\end{aligned}    
\end{equation}
where we have used that $(1+D^{\alpha}) A=-\partial_x$. We denote by $G_{\alpha}\in L^{1}(\mathbb{R})$ the kernel representation of the operator $(1+D^{\alpha})^{-1}$. For further properties of $G_{\alpha}$, we refer to \cite[Lemma C.1]{FraLenzSilv2016}.

Consequently, using \eqref{eq2} and that $G_{\alpha}\in L^{1}(\mathbb{R})$, an application of Young's inequality yields 
\begin{equation}
\begin{aligned}
\|\big[A,g \big]f\|_{L^p}\lesssim & \|G_{\alpha}\ast \Big(\partial_x gf+[D^{\alpha},g] A f\Big) \|_{L^p}\\
\lesssim & \|\partial_x g\|_{L^{\infty}}\|f\|_{L^p}+ \|[D^{\alpha},g] A f\|_{L^p}. 
\end{aligned}    
\end{equation}
To estimate the second term on the right-hand side of the above inequality, we use the decomposition $\partial_x=-\hil D=-\hil D^{1-\alpha}D^{\alpha}$ to apply Lemma \ref{dmp2} to get
\begin{equation*}
\begin{aligned}
   \|[D^{\alpha},g] A f\|_{L^p} = &   \|[D^{\alpha},g]D^{1-\alpha} \hil D^{\alpha}(1+D^{\alpha})^{-1}f\|_{L^p}\\
   \lesssim &  \|\partial_x{g}\|_{L^{\infty}}\| \hil D^{\alpha}(1+D^{\alpha})^{-1}f\|_{L^p}\\
   \lesssim &  \|\partial_x{g}\|_{L^{\infty}}\| f\|_{L^p},
\end{aligned}    
\end{equation*}    
where the above estimate follows from writing $ D^{\alpha}(1+D^{\alpha})^{-1}f=f-(1+D^{\alpha})^{-1}f=f-G_{\alpha}\ast f$, and using that the Hilbert transform establishes a bounded operator in $L^p(\mathbb{R})$. Gathering the previous results complete the proof.

\end{proof}

%%%%%%%%%%%%%%%%%%%%%%%%%%%%%%%%%%%%%%%%%%%%%%%%%%%%%%%%%%%%%%%%%%%%%%%%%%%%%%%%%%%%%%%%%%%%%%%%%%%%%%%%%%%%%%%%%%%%%%%%%%%%%%%%%%%%%%%%%%%%%%%%%%%%%

\subsection{Fractional derivative and interpolation estimates}

We recall the following  characterization of the $L^p_s(\R^n)\!=\!(1-\Delta)^{-\frac{s}{2}}L^p(\R^n)$ spaces given  in \cite{St1}, (see \cite{AroSmit} for the case $p=2$).

 \begin{theorem}[\cite{St1}]
 \label{theorem9b}
Let $b\in (0,1)$ and $\; \frac{ 2n}{n+2b}< p< \infty$. Then $f\in  L^p_b(\R^n)$ if and only if
\begin{equation}\label{d1}
\;(a)\hskip10pt f\in L^p(\R^n),\hskip3cm\text{\hskip5cm }\\
\end{equation}
\begin{equation}\label{d1b}
\;(b)\hskip10pt \mathcal D^b f(x)=\Big(\int_{\R^n}\frac{|f(x)-f(y)|^2}{|x-y|^{n+2b   }}dy\Big)^{\frac{1 }{2}}\in L^p(\R^n),
\text{\hskip2cm}
\end{equation}
with
 \begin{equation}
\label{d1-norm}
\|f\|_{b,p}\equiv  \|(1-\Delta)^{b} f\|_p=\|J^sf\|_p\simeq \|f\|_p+\|D^b    f\|_p\simeq \|f\|_p
+\|\mathcal D^b  f\|_p.
\end{equation}
 \end{theorem}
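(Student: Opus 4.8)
The plan is to reduce the inhomogeneous statement to a homogeneous square-function comparison and to treat the $L^p$ term separately. The equivalence $\|J^bf\|_p \simeq \|f\|_p + \|D^bf\|_p$ is standard: the quotients $\langle\xi\rangle^{b}/(1+|\xi|^b)$ and $(1+|\xi|^b)/\langle\xi\rangle^{b}$ are $L^p$ Fourier multipliers of Mikhlin--H\"ormander type for $1<p<\infty$, so the whole content is the comparison $\|\mathcal{D}^b f\|_p \simeq \|D^b f\|_p$ together with the absorption of the coarse scales into $\|f\|_p$. The natural seed is the case $p=2$, where Plancherel applied to the substitution $\widehat{f(\cdot)-f(\cdot-h)}(\xi)=(1-e^{-ih\cdot\xi})\widehat f(\xi)$ gives
\[
\|\mathcal{D}^b f\|_2^2=\int_{\R^n}\Big(\int_{\R^n}\frac{|1-e^{-ih\cdot\xi}|^2}{|h|^{n+2b}}\,dh\Big)|\widehat f(\xi)|^2\,d\xi = c_b\,\|D^b f\|_2^2 ,
\]
with $c_b=\int_{\R^n}|1-e^{-ie\cdot h}|^2|h|^{-n-2b}\,dh$ finite (convergent at $h=0$ since $b<1$, at infinity since $b>0$). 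This identity fixes the constant and motivates the general $p$ estimate.

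To reach all $p$ I would cast the problem in vector-valued form. Put $\mathcal{K}=L^2(\R^n,dh)$ and define the $\mathcal{K}$-valued operator $T$ by $(Tf)(x)(h)=|h|^{-\frac n2-b}\bigl(f(x)-f(x-h)\bigr)$, so that $\mathcal{D}^b f(x)=\|(Tf)(x)\|_{\mathcal{K}}$ and $\|\mathcal{D}^b f\|_p=\|Tf\|_{L^p(\R^n;\mathcal{K})}$. Writing $g=D^bf$, the operator $T$ acts on $g$ as a Fourier multiplier with $\mathcal{K}$-valued symbol
\[
m(\xi)(h)=|h|^{-\frac n2-b}\bigl(1-e^{-ih\cdot\xi}\bigr)\,|\xi|^{-b},
\]
and the $p=2$ computation above is exactly the statement $\|m(\xi)\|_{\mathcal{K}}^2\equiv c_b$, constant in $\xi$. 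This boundedness of the symbol is what one wants to upgrade to $L^p$ via the vector-valued Mikhlin--H\"ormander theorem.

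The essential difficulty is that the naive Mikhlin conditions \emph{fail}: differentiating $1-e^{-ih\cdot\xi}$ in $\xi$ produces a factor $h$, and $\||\xi|\,\partial_\xi m(\xi)\|_{\mathcal{K}}^2$ involves $\int|h|^{2-n-2b}\,dh$, which diverges at infinity when $b<1$. This forces a split $\mathcal{D}^b f=\mathcal{D}^b_{\mathrm{near}}f+\mathcal{D}^b_{\mathrm{far}}f$ according to $|h|\le1$ and $|h|>1$. The far piece carries no smoothness: dyadic decomposition of the $h$-annuli gives $\mathcal{D}^b_{\mathrm{far}}f(x)^2\lesssim |f(x)|^2+\mathcal M(|f|^2)(x)$ with $\mathcal M$ the Hardy--Littlewood maximal operator, which is absorbed into $\|f\|_p$ (hence the appearance of hypothesis (a)); this is precisely where the range $\tfrac{2n}{n+2b}<p$ is needed, since for $p$ below $2$ the crude maximal bound must be replaced by a cancellation-sensitive argument that only closes above that threshold. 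The near piece has a truncated symbol whose $\xi$-derivatives do lie in $\mathcal{K}$, and it is comparable to the Littlewood--Paley square function $\bigl(\sum_{j}2^{2jb}|\Delta_j f|^2\bigr)^{1/2}$, whose $L^p$ norm is $\simeq\|D^b f\|_p$ for $1<p<\infty$. The reverse inequality $\|D^b f\|_p\lesssim\|\mathcal{D}^b f\|_p$ I would obtain from the reproducing identity $\widehat g(\xi)=c_b^{-1}\int_{\R^n}\overline{m(\xi)(h)}\,\widehat{Tf}(\xi,h)\,dh$, exhibiting $D^bf$ as the image of $Tf$ under a dual vector-valued operator, subject to the same near/far analysis.

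The main obstacle is therefore not the positive $L^2$ estimate but the sharp range: controlling $\mathcal{D}^b_{\mathrm{far}}f$ in $L^p$ for $\tfrac{2n}{n+2b}<p<2$ requires exploiting the cancellation in $f(x)-f(x-h)$ rather than pointwise domination, and it is exactly this step that pins down the lower threshold on $p$ and that must be combined with a density argument over Schwartz functions to pass from the a priori estimates to all $f\in L^p_b$. An alternative and historically cleaner route, which I would keep in reserve, is Stein's comparison of $\mathcal{D}^b f$ with the Lusin area integral and the $g_\lambda^*$-function of a Poisson (or heat) extension; this packages the same square-function heuristics and tends to handle the admissible $p$-range more transparently.
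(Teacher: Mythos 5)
The paper does not prove this statement at all: it is quoted verbatim from Stein's 1961 characterization of the potential spaces $L^p_b$ (the reference \cite{St1}, with \cite{AroSmit} for $p=2$), so there is no in-paper argument to compare against. Judged on its own terms, your outline is the standard modern route (vector-valued Calder\'on--Zygmund/Littlewood--Paley theory for the operator $f\mapsto |h|^{-n/2-b}(f(\cdot)-f(\cdot-h))$), and the parts you actually carry out are correct: the Plancherel computation giving $\|\mathcal D^b f\|_2 = c_b^{1/2}\|D^b f\|_2$ is a complete proof of the $p=2$ case (which, note, is the only case the paper ever invokes, through \eqref{pointwise2}, \eqref{pointwise2a} and \eqref{Linftybound}), the reduction of $J^b$ versus $D^b$ to Mikhlin multipliers is routine, your observation that the untruncated symbol fails the Mikhlin condition because $\int_{|h|>1}|h|^{2-n-2b}\,dh$ diverges is accurate, and the far-piece bound $\mathcal D^b_{\mathrm{far}}f(x)^2\lesssim |f(x)|^2+\mathcal M(|f|^2)(x)$ with absorption into $\|f\|_p$ is valid for $p>2$.

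The one genuine gap is the range $\frac{2n}{n+2b}<p<2$, which you flag but do not close, and which is precisely the nontrivial content of the theorem: the bound $\|\mathcal M(|f|^2)^{1/2}\|_p\le\|\mathcal M(|f|^2)\|_{p/2}^{1/2}$ needs $p/2>1$, so below $p=2$ neither the far piece nor the square-function comparison follows from the crude pointwise domination, and nothing in your sketch explains why the threshold is exactly $\frac{2n}{n+2b}$ rather than, say, $1$. The route you ``keep in reserve'' is in fact the resolution and is Stein's actual argument: one compares $\mathcal D^b f$ with the $g_\lambda^*$-function of a Poisson extension with $\lambda=1+\frac{2b}{n}$, and the classical $L^p$ boundedness of $g_\lambda^*$ for $1<p<2$ holds exactly when $\lambda>\frac{2}{p}$, i.e.\ $p>\frac{2n}{n+2b}$ --- this is where the stated lower endpoint comes from. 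So either promote that reserve argument to the main line for $p<2$, or accept that as written the proposal proves the theorem only for $p\ge 2$ (which suffices for every application made in this paper).
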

It follows from \eqref{d1b} that for $p=2$ and $b\in(0,1)$ one has
\begin{equation}\label{pointwise2}
\|\mathcal D^b (fg)\|_2\leq \|f\, \mathcal D^b g\|_2  +\|g\, \mathcal D^b f\|_2,
\end{equation}
and
\begin{equation}\label{pointwise2a}
\|\mathcal D^b f\|_2= c\, \|D^b f\|_2.
\end{equation}
The definition \eqref{d1b} also yields
\begin{equation}\label{Linftybound}
  \|\mathcal{D}^{b}f\|_{L^{\infty}}\lesssim \|f\|_{L^{\infty}}+\|\partial_x f\|_{L^{\infty}}.  
\end{equation}
We also need the following interpolation inequality whose proof is a consequence of Theorem \ref{theorem9b} and complex interpolation
\begin{lemma}[\cite{FoPo}]\label{lemma1}
Let $s,\,b>0$. Assume that $ J^{s}f=(1-\Delta)^{\frac{s}{2}}f\in L^2(\mathbb R)$ and %%\newline
$\ji x\jd^{b}f=(1+|x|^2)^{\frac{b}{2}}f\in L^2(\mathbb R)$. Then for any $\theta \in (0,1)$
\begin{equation}\label{complex}
\|J^{ \theta s}(\ji x\jd^{(1-\theta) b} f)\|_2\leq c\|\ji x\jd^b f\|_2^{1-\theta}\,\|J^{s}f\|_2^{\theta}.
\end{equation}
More generally, let $r_1$, $r_2\geq 0$, and $w_{N}$ be as in \eqref{trun}. If $J^s f, \ji x\jd^{r_1b}w_{N}^{r_2b}f \in L^2(\mathbb{R})$, it follows 
\begin{equation}\label{complex2}
\|J^{ \theta s}\big((\ji x\jd^{r_1} w_{N}^{r_2})^{(1-\theta) b} f\big)\|_2\leq c\|(\ji x\jd^{r_1}w_{N}^{r_2})^b f\|_2^{1-\theta}\,\|J^{s}f\|_2^{\theta}
\end{equation}
with a constant $c$ independent of $N$.
\end{lemma}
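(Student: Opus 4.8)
The plan is to establish the inequality by a complex-interpolation (three-lines / Stein) argument, using the characterization of Theorem \ref{theorem9b} to control the boundary behaviour. By density it suffices to treat $f$ in the Schwartz class, the general case following by approximation once the inequality holds with a constant independent of $f$. Fixing such an $f$ and a test function $h\in L^2(\mathbb R)$ with $\|h\|_2\le 1$, I would introduce on the closed strip $\Sigma=\{z\in\mathbb C:0\le \mathrm{Re}\,z\le 1\}$ the scalar function
\begin{equation*}
G(z)=\int_{\mathbb R} J^{zs}\big(\langle x\rangle^{(1-z)b}f\big)(x)\,\overline{h(x)}\,dx .
\end{equation*}
Since $J^{zs}$ has the entire symbol $\langle\xi\rangle^{zs}$ and $\langle x\rangle^{(1-z)b}$ is entire in $z$, for Schwartz $f$ the integrand is analytic in $z$, so $G$ is holomorphic in the interior of $\Sigma$ and continuous up to its boundary. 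On $\Sigma$ the symbol $\langle\xi\rangle^{zs}$ has modulus $\langle\xi\rangle^{(\mathrm{Re}\,z)s}$ and $|\langle x\rangle^{(1-z)b}|=\langle x\rangle^{(1-\mathrm{Re}\,z)b}$, so $G$ grows at most polynomially in $\mathrm{Im}\,z$ and satisfies the Phragm\'en--Lindel\"of admissibility hypothesis required for the three-lines theorem.

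I then estimate $G$ on the two edges. On $\mathrm{Re}\,z=0$, writing $z=iy$ and using that $J^{iys}$ is an $L^2$-isometry together with $|\langle x\rangle^{-iyb}|=1$,
\begin{equation*}
|G(iy)|\le \big\|J^{iys}\big(\langle x\rangle^{(1-iy)b}f\big)\big\|_2=\big\|\langle x\rangle^{b}f\big\|_2 .
\end{equation*}
On $\mathrm{Re}\,z=1$, writing $z=1+iy$ and using $\|J^{(1+iy)s}g\|_2=\|J^sg\|_2$,
\begin{equation*}
|G(1+iy)|\le \big\|J^{s}\big(\langle x\rangle^{-iyb}f\big)\big\|_2 .
\end{equation*}
The crux is to bound the last quantity by $(1+|y|)^{N}\|J^sf\|_2$. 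For $s\in(0,1)$ this is exactly where Theorem \ref{theorem9b} enters: by \eqref{d1-norm} it suffices to control $\|\langle x\rangle^{-iyb}f\|_2=\|f\|_2$ and $\|\mathcal D^s(\langle x\rangle^{-iyb}f)\|_2$, and the pointwise product rule \eqref{pointwise2} with $|\langle x\rangle^{-iyb}|=1$ gives
\begin{equation*}
\|\mathcal D^s(\langle x\rangle^{-iyb}f)\|_2\le \|\mathcal D^s f\|_2+\big\|f\,\mathcal D^s(\langle x\rangle^{-iyb})\big\|_2 .
\end{equation*}
The bound \eqref{Linftybound} applied to the modulus-one multiplier $\langle x\rangle^{-iyb}$ yields $\|\mathcal D^s(\langle x\rangle^{-iyb})\|_{L^\infty}\lesssim 1+\|\partial_x\langle x\rangle^{-iyb}\|_{L^\infty}\lesssim 1+|y|$, whence $\|J^s(\langle x\rangle^{-iyb}f)\|_2\lesssim (1+|y|)\|J^sf\|_2$. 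For general $s>0$ the same conclusion follows by combining this fractional estimate with the fractional Leibniz rule of Lemma \ref{leibnizhomog}, the derivatives of $\langle x\rangle^{-iyb}$ contributing only powers of $(1+|y|)$.

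With the edge bounds $|G(iy)|\le \|\langle x\rangle^b f\|_2$ and $|G(1+iy)|\lesssim (1+|y|)^N\|J^sf\|_2$ in hand, the three-lines theorem, in the form valid for boundary data of subexponential growth, gives at $\mathrm{Re}\,z=\theta$
\begin{equation*}
|G(\theta)|\le c\,\|\langle x\rangle^b f\|_2^{\,1-\theta}\,\|J^sf\|_2^{\,\theta},
\end{equation*}
the polynomial factor $(1+|y|)^N$ being absorbed into $c$ upon integrating against the Poisson kernel of the strip. Since $G(\theta)=\int J^{\theta s}(\langle x\rangle^{(1-\theta)b}f)\,\overline h\,dx$, taking the supremum over $\|h\|_2\le 1$ yields \eqref{complex}. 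The weighted version \eqref{complex2} is then obtained verbatim with $\langle x\rangle^{r_1}w_N^{r_2}$ in place of $\langle x\rangle^{b}$: one only checks that $(\langle x\rangle^{r_1}w_N^{r_2})^{-iyb}$ still has modulus one and that $\partial_x\log(\langle x\rangle^{r_1}w_N^{r_2})$ is bounded uniformly in $N$, so the edge bounds, and hence the final constant, are independent of $N$. I expect the main obstacle to be precisely the edge estimate on $\mathrm{Re}\,z=1$, namely showing that the oscillatory modulus-one multiplier $\langle x\rangle^{-iyb}$ acts on $H^s$ with norm growing only polynomially in $y$; the remainder is standard complex-interpolation bookkeeping.
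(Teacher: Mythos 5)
Your argument is correct and takes essentially the same route as the paper, which simply invokes \cite[Lemma 1]{FoPo}: a three-lines/Stein interpolation on the strip, with the $\mathrm{Re}\,z=0$ edge handled by unitarity of $J^{iys}$ and the modulus-one weight, the $\mathrm{Re}\,z=1$ edge controlled through the $\mathcal{D}^{s}$ characterization of Theorem \ref{theorem9b} together with \eqref{pointwise2} and \eqref{Linftybound}, and the $N$-uniformity in \eqref{complex2} reduced to uniform bounds on logarithmic derivatives of $\rho=\langle x\rangle^{r_1}w_N^{r_2}$. The only minor point is that for $s\geq 1$ one needs uniform control of higher logarithmic derivatives as well (the paper records $|\rho'/\rho|+|\rho''/\rho|\lesssim 1$), not just $\partial_x\log\rho$, but these are equally bounded independently of $N$.
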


\begin{proof}
The interpolation inequality \eqref{complex} was deduced in \cite[Lemma 1]{FoPo}. Now, setting $\rho=\ji x\jd^{r_1} w_{N}^{r_2}$, we have that $|\rho'/\rho|+|\rho''/\rho|\lesssim 1$ with implicit constant independent of $N$. This in turn implies that  \eqref{complex2} is a consequence of the proof of Lemma 1 in \cite{FoPo}.   
\end{proof}

\begin{remark}\label{remarkinterp} 
Using Plancherel's identity and \eqref{complex} in the frequency domain, a similar interpolation result to \eqref{complex} is valid for $\ji x\jd^{(1-\theta) b} J^{ \theta s}f$.     
\end{remark}

Next result will be useful in our arguments. Its proof directly follows from the definition of the fractional derivative $\mathcal{D}^{b}$ in \eqref{d1b}, and it is given in \cite{GFFLGP1}.
 \begin{proposition}\label{prop3}
For any $\theta\in (0,1)$, $\alpha>0$ and $\psi\in C_0^{\infty}(\R)$ with $0\leq\psi\leq1$ and $\psi(\xi)=1$ for $|\xi|\leq 1$
\begin{equation}\label{steinderiv}
\mathcal{D}^{\theta}\Big(|\xi|^{\alpha}\psi(\xi)\Big)(\eta)\sim
\begin{cases}
c\,|\eta|^{\alpha-\theta}+c_1,\hskip15pt\alpha\neq \theta,\hskip10pt |\eta| \ll 1,\\
\\
c\,(-\ln|\eta|)^{\frac{1}{2}},\hskip20pt \alpha=\theta,\hskip10pt|\eta| \ll 1,\\
\\
\dfrac{c}{|\eta|^{1/2+\theta}},\hskip78pt |\eta|\gg1,
\end{cases}
\end{equation}
with $\mathcal{D}^{\theta}\Big(|\xi|^{\alpha}\psi(\xi)\Big)(\cdot) $ continuous in $\eta\in\R-\{0\}$.
In particular, one has that
\begin{equation*}
\mathcal{D}^{\theta}\Big(|\xi|^{\alpha}\psi(\xi)\Big)\in L^2(\R)
\text{\hskip10pt if and only if\hskip10pt}\theta<\alpha+1/2.
\end{equation*}
Similar result holds for $\mathcal{D}^{\theta}\Big(|\xi|^{\alpha}\sgn(\xi)\psi(\xi)\Big)(\eta)$.
\end{proposition}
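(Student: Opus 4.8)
The plan is to work directly from the integral definition \eqref{d1b} with $n=1$ and $b=\theta$. Writing $g(\xi)=|\xi|^\alpha\psi(\xi)$, the object to analyze is
\[
\mathcal{D}^{\theta}g(\eta)^2=\int_{\R}\frac{|g(\eta)-g(\zeta)|^2}{|\eta-\zeta|^{1+2\theta}}\,d\zeta,
\]
and by evenness of $g$ it suffices to treat $\eta>0$. I would split the line into a fixed neighborhood of the origin, where $\psi\equiv1$ and hence $g(\zeta)=|\zeta|^\alpha$, and its complement, where $g$ is smooth and compactly supported, handling the regimes $|\eta|\gg1$ and $|\eta|\ll1$ separately.

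For $|\eta|\gg1$: since $\psi$ has compact support (say in $[-R,R]$), $g(\eta)=0$ once $|\eta|>R$, so $\mathcal{D}^{\theta}g(\eta)^2=\int_{|\zeta|\le R}|\zeta|^{2\alpha}\psi(\zeta)^2|\eta-\zeta|^{-(1+2\theta)}\,d\zeta$. For $|\eta|\ge 2R$ one has $|\eta-\zeta|\sim|\eta|$ uniformly on the support, so the integral is comparable to $|\eta|^{-(1+2\theta)}\|g\|_2^2$, giving $\mathcal{D}^{\theta}g(\eta)\sim c\,|\eta|^{-(1/2+\theta)}$, as claimed.

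The heart of the matter is $|\eta|\ll1$. Fix $\delta\in(0,1)$ with $\psi\equiv1$ on $[-\delta,\delta]$ and split $\mathcal{D}^{\theta}g(\eta)^2=I_{\mathrm{near}}(\eta)+I_{\mathrm{far}}(\eta)$ accordingly. On the far piece $|\zeta|>\delta$, as $\eta\to0$ we have $g(\eta)\to0$ and $|\eta-\zeta|\to|\zeta|$, so by dominated convergence $I_{\mathrm{far}}(\eta)\to c_1^2:=\int_{|\zeta|>\delta}g(\zeta)^2|\zeta|^{-(1+2\theta)}\,d\zeta$, a finite positive constant. For the near piece I would exploit the homogeneity of $|\xi|^\alpha$ via the rescaling $\zeta=\eta u$, which yields
\[
I_{\mathrm{near}}(\eta)=\eta^{2\alpha-2\theta}\int_{|u|\le\delta/\eta}\frac{\big|1-|u|^\alpha\big|^2}{|1-u|^{1+2\theta}}\,du.
\]
The integrand is integrable near $u=1$ (the numerator vanishes linearly, leaving $|1-u|^{1-2\theta}$, integrable since $\theta<1$) and behaves like $|u|^{2\alpha-1-2\theta}$ as $|u|\to\infty$. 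The three cases of the statement then emerge from this large-$u$ behavior: if $\alpha<\theta$ the $u$-integral converges to a finite constant $K$, so $I_{\mathrm{near}}(\eta)\sim K\,\eta^{2(\alpha-\theta)}$ and this blow-up dominates; if $\alpha>\theta$ the truncated integral grows like $\eta^{-2(\alpha-\theta)}$, so $I_{\mathrm{near}}$ tends to a constant and $I_{\mathrm{far}}$ supplies the leading $c_1^2$; and if $\alpha=\theta$ the integral diverges logarithmically, $\int^{\delta/\eta}|u|^{-1}\,du\sim-\ln\eta$, giving $\mathcal{D}^{\theta}g(\eta)^2\sim-\ln\eta$. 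Collecting these reproduces $c\,|\eta|^{\alpha-\theta}+c_1$ for $\alpha\ne\theta$ and $c(-\ln|\eta|)^{1/2}$ for $\alpha=\theta$. Continuity on $\R\setminus\{0\}$ follows from dominated convergence, the singularity at $\zeta=\eta$ being tamed by the vanishing numerator; and the $L^2$ dichotomy is then immediate, since the tail $|\eta|^{-(1/2+\theta)}$ is always square-integrable at infinity, while near the origin $|\eta|^{\alpha-\theta}\in L^2$ iff $2(\alpha-\theta)>-1$, i.e. $\theta<\alpha+\tfrac12$ (the logarithmic and bounded profiles being locally $L^2$ as well). The $\sgn$ variant is identical: replacing $g$ by $|\xi|^\alpha\sgn(\xi)\psi(\xi)$ produces the constant $\int|1-|u|^\alpha\sgn u|^2|1-u|^{-(1+2\theta)}\,du$, whose behavior near $u=1$ and at infinity coincides with the above.

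\emph{Main obstacle.} I expect the delicate point to be the small-$\eta$ analysis, namely controlling the truncation at $u=\delta/\eta$ in the rescaled integral uniformly across the three regimes $\alpha\lessgtr\theta$, and matching $I_{\mathrm{near}}$ with $I_{\mathrm{far}}$ (in particular verifying $c_1\ne0$) so that the two-term profile $c|\eta|^{\alpha-\theta}+c_1$ is sharp both from above and below.
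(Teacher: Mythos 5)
Your proposal is correct and takes essentially the same route as the paper, which defers this proof to \cite{GFFLGP1}: there the asymptotics are obtained precisely by this direct computation from the definition \eqref{d1b}, isolating the region where $\psi\equiv 1$ and rescaling $\zeta=\eta u$ to expose the three regimes $\alpha\lessgtr\theta$ and the tail decay $|\eta|^{-(1/2+\theta)}$. The points you flag as delicate (the truncation at $u=\delta/\eta$ and the positivity of $c_1$) are handled exactly as you outline, so nothing further is needed.
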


\begin{proposition}\label{prop4}
For any $\theta\in (0,1)$, $0<\beta<1/2$, and $\psi$ a compactly supported smooth cut-off function
\begin{equation}\label{negsteinderiv}
\mathcal{D}^{\theta}\Big(|\xi|^{-{\beta}}\psi(\xi)\Big)(\eta)\lesssim
(\|\psi\|_{L^\infty}+\|\langle \xi \rangle\psi\|_{L^\infty})|\eta|^{-\beta-\theta}
\end{equation}
for all  $\eta\neq 0$. 
\end{proposition}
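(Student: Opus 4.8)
The plan is to work directly from the singular-integral definition \eqref{d1b},
\[
\mathcal{D}^{\theta}\big(|\xi|^{-\beta}\psi\big)(\eta)^2=\int_{\mathbb{R}}\frac{\big||\eta|^{-\beta}\psi(\eta)-|\zeta|^{-\beta}\psi(\zeta)\big|^2}{|\eta-\zeta|^{1+2\theta}}\,d\zeta,
\]
and first to decouple the homogeneous singularity $|\xi|^{-\beta}$ from the cut-off $\psi$. Writing
\[
|\eta|^{-\beta}\psi(\eta)-|\zeta|^{-\beta}\psi(\zeta)=|\eta|^{-\beta}\big(\psi(\eta)-\psi(\zeta)\big)+\big(|\eta|^{-\beta}-|\zeta|^{-\beta}\big)\psi(\zeta)
\]
and applying Minkowski's inequality in $L^2\big(|\eta-\zeta|^{-1-2\theta}d\zeta\big)$ together with $|\psi(\zeta)|\le\|\psi\|_{L^\infty}$, I obtain the pointwise bound
\[
\mathcal{D}^{\theta}\big(|\xi|^{-\beta}\psi\big)(\eta)\le |\eta|^{-\beta}\,\mathcal{D}^{\theta}\psi(\eta)+\|\psi\|_{L^\infty}\,\mathcal{D}^{\theta}\big(|\xi|^{-\beta}\big)(\eta).
\]
This reduces the proposition to two independent estimates, one for the pure power and one for the cut-off.

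For the homogeneous term I would exploit that $\mathcal{D}^{\theta}$ lowers homogeneity by $\theta$: the change of variables $\zeta\mapsto|\eta|\zeta$ in the defining integral, together with the evenness of $|\xi|^{-\beta}$, gives $\mathcal{D}^{\theta}\big(|\xi|^{-\beta}\big)(\eta)=c_{\beta,\theta}\,|\eta|^{-\beta-\theta}$ with $c_{\beta,\theta}=\mathcal{D}^{\theta}\big(|\xi|^{-\beta}\big)(1)$. It then remains to verify $c_{\beta,\theta}<\infty$, i.e. the convergence of $\int_{\mathbb{R}}\big|1-|\zeta|^{-\beta}\big|^2|1-\zeta|^{-1-2\theta}\,d\zeta$. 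Three regimes are decisive: near $\zeta=1$ the numerator vanishes to second order, so integrability needs $\theta<1$; at infinity the integrand decays like $|\zeta|^{-1-2\theta}$, needing $\theta>0$; and near $\zeta=0$ the integrand behaves like $|\zeta|^{-2\beta}$, which is integrable precisely because $\beta<\tfrac12$. This yields the clean contribution $\|\psi\|_{L^\infty}|\eta|^{-\beta-\theta}$.

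For the cut-off term I must show $\mathcal{D}^{\theta}\psi(\eta)\lesssim|\eta|^{-\theta}$, which I would split according to the size of $|\eta|$. For $|\eta|\le 1$ it suffices to prove $\mathcal{D}^{\theta}\psi(\eta)\lesssim 1$, which follows from the smoothness of $\psi$ via \eqref{Linftybound}, $\|\mathcal{D}^{\theta}\psi\|_{L^\infty}\lesssim\|\psi\|_{L^\infty}+\|\partial_x\psi\|_{L^\infty}$, since $|\eta|^{-\theta}\ge1$ there. For $|\eta|\ge1$ I would use the compact support of $\psi$: once $|\eta|$ exceeds twice the radius of $\operatorname{supp}\psi$ one has $\psi(\eta)=0$ and $|\eta-\zeta|\gtrsim|\eta|$ on the support, whence $\mathcal{D}^{\theta}\psi(\eta)^2\lesssim|\eta|^{-1-2\theta}\|\psi\|_{L^2}^2$, i.e. $\mathcal{D}^{\theta}\psi(\eta)\lesssim|\eta|^{-\frac12-\theta}\|\psi\|_{L^2}\le|\eta|^{-\theta}\|\psi\|_{L^2}$; the bounded intermediate band is absorbed using boundedness on the compact support. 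The point is that $\|\psi\|_{L^2}\lesssim\|\langle\xi\rangle\psi\|_{L^\infty}$ because $\langle\xi\rangle^{-1}\in L^2(\mathbb{R})$, and this is exactly where the factor $\|\langle\xi\rangle\psi\|_{L^\infty}$ of the statement originates. Multiplying by $|\eta|^{-\beta}$ and adding the two contributions produces the asserted bound.

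The main obstacle, and the genuine use of the hypotheses, is the near-origin analysis of the homogeneous piece: the singularity $|\zeta|^{-2\beta}$ of the defining integral is integrable only for $\beta<\tfrac12$, which forces both the restriction on $\beta$ and the decay exponent $-\beta-\theta$. A secondary, purely bookkeeping difficulty is to keep the two $\psi$-norms clean: the near-field of the cut-off is controlled by $\|\psi\|_{L^\infty}$ (together with $\|\partial_x\psi\|_{L^\infty}$, absorbed into the implicit constant), while the far-field tail must be measured through $\|\langle\xi\rangle\psi\|_{L^\infty}$ so as to bound $\|\psi\|_{L^2}$ independently of the size of $\operatorname{supp}\psi$.
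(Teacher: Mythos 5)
Your argument is correct and, unlike the paper (which offers no proof of Proposition \ref{prop4} and simply defers to \cite{Ri}), it is self-contained. The two key points are exactly right: (i) the splitting $|\eta|^{-\beta}\psi(\eta)-|\zeta|^{-\beta}\psi(\zeta)=|\eta|^{-\beta}(\psi(\eta)-\psi(\zeta))+(|\eta|^{-\beta}-|\zeta|^{-\beta})\psi(\zeta)$ followed by Minkowski in $L^2(|\eta-\zeta|^{-1-2\theta}d\zeta)$ is the pointwise Leibniz rule underlying \eqref{pointwise2}, and (ii) the scaling $\zeta\mapsto|\eta|\zeta$ gives $\mathcal{D}^{\theta}(|\xi|^{-\beta})(\eta)=c_{\beta,\theta}|\eta|^{-\beta-\theta}$ exactly, with your three-regime check (quadratic vanishing at $\zeta=1$ needing $\theta<1$, tail decay needing $\theta>0$, and the $|\zeta|^{-2\beta}$ singularity at the origin needing $\beta<\tfrac12$) correctly identifying where every hypothesis is used. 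The far-field bound $\mathcal{D}^{\theta}\psi(\eta)\lesssim|\eta|^{-1/2-\theta}\|\psi\|_{L^2}$ and the observation $\|\psi\|_{L^2}\lesssim\|\langle\xi\rangle\psi\|_{L^{\infty}}$ are also fine. One caveat you half-acknowledge but should state plainly: in the regimes $|\eta|\le1$ and $1\le|\eta|\lesssim\mathrm{diam}(\operatorname{supp}\psi)$ you invoke \eqref{Linftybound} and the support radius, so your implicit constant depends on $\|\partial_{\xi}\psi\|_{L^{\infty}}$ and on $\operatorname{supp}\psi$, not only on the two norms displayed in \eqref{negsteinderiv}. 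This dependence is in fact unavoidable --- a cutoff sharpening at scale $\epsilon$ has $\mathcal{D}^{\theta}\psi(0)\gtrsim\epsilon^{-\theta}$ while $\|\psi\|_{L^{\infty}}+\|\langle\xi\rangle\psi\|_{L^{\infty}}$ stays bounded --- so the proposition must be read with the constant depending on the fixed cutoff $\psi$; this is harmless for every application in the paper, but your proof is more honest than the statement on this point.
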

For the proof of \eqref{negsteinderiv}, we refer to \cite{Ri}.  Next, we deduce the following fractional derivative estimate for the function $\frac{1}{1+|\xi|^{\alpha}}$.

\begin{proposition}\label{prop5}
Let $\alpha>0$. For any $\theta\in (0,1)$ and $\psi$ a compactly supported smooth cut-off function
\begin{equation}\label{negsteinderiv2}
\mathcal{D}^{\theta}\Big(\frac{1}{1+|\xi|^{\alpha}}\psi(\xi)\Big)(\eta)\lesssim \mathcal{D}^{\theta}(\psi(\xi))(\eta)+\mathcal{D}^{\theta}(|\xi|^{\alpha}\psi(\xi))(\eta).
\end{equation}
for all  $\eta\in \mathbb{R}$.
\end{proposition}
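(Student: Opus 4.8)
The goal is to bound the Stein derivative $\mathcal{D}^{\theta}$ of the product $\frac{1}{1+|\xi|^{\alpha}}\psi(\xi)$ by the Stein derivatives of $\psi$ and of $|\xi|^{\alpha}\psi$. The natural starting point is the pointwise definition \eqref{d1b} in one dimension, namely
\begin{equation*}
\mathcal{D}^{\theta}(h)(\eta)=\Big(\int_{\mathbb{R}}\frac{|h(\eta)-h(\zeta)|^2}{|\eta-\zeta|^{1+2\theta}}\,d\zeta\Big)^{\frac{1}{2}}.
\end{equation*}
The plan is to write $h(\xi)=\frac{1}{1+|\xi|^{\alpha}}\psi(\xi)$ and to analyze the difference $h(\eta)-h(\zeta)$ algebraically, splitting it into a piece controlled by the variation of $\psi$ and a piece controlled by the variation of $|\xi|^{\alpha}$, each weighted by the bounded factor $\frac{1}{1+|\xi|^{\alpha}}\le 1$.

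Concretely, I would use the elementary identity
\begin{equation*}
\frac{\psi(\eta)}{1+|\eta|^{\alpha}}-\frac{\psi(\zeta)}{1+|\zeta|^{\alpha}}
=\frac{\psi(\eta)-\psi(\zeta)}{1+|\eta|^{\alpha}}
+\psi(\zeta)\,\frac{|\zeta|^{\alpha}-|\eta|^{\alpha}}{(1+|\eta|^{\alpha})(1+|\zeta|^{\alpha})}.
\end{equation*}
Inserting this into the integrand and applying the triangle inequality in $L^2(d\zeta/|\eta-\zeta|^{1+2\theta})$ splits $\mathcal{D}^{\theta}(h)(\eta)$ into two square-root integrals. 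In the first, the factor $\frac{1}{1+|\eta|^{\alpha}}$ is a constant in $\zeta$ and is bounded by $1$, so that term is exactly bounded by $\mathcal{D}^{\theta}(\psi)(\eta)$. For the second term I would use $\frac{\psi(\zeta)}{1+|\zeta|^{\alpha}}\le 1$ (since $0\le\psi\le 1$) and $\frac{1}{1+|\eta|^{\alpha}}\le 1$, which dominates the integrand by $\frac{\big||\zeta|^{\alpha}-|\eta|^{\alpha}\big|}{|\eta-\zeta|^{1/2+\theta}}$ inside the $L^2$ integral; but $\big||\eta|^{\alpha}-|\zeta|^{\alpha}\big|=\big|(|\eta|^{\alpha}\psi(\eta)-|\zeta|^{\alpha}\psi(\zeta))+(\text{lower order})\big|$ on the support issues must be handled, so I would instead factor the second numerator as $\big(|\zeta|^{\alpha}\psi(\zeta)-|\eta|^{\alpha}\psi(\zeta)\big)$ and recover $\mathcal{D}^{\theta}(|\xi|^{\alpha}\psi)(\eta)$ after adding and subtracting $|\eta|^{\alpha}\psi(\eta)$.

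The main obstacle will be the bookkeeping in the second term: the clean combination that appears is $|\zeta|^{\alpha}\psi(\zeta)-|\eta|^{\alpha}\psi(\eta)$ (whose $L^2$-difference integral is $\mathcal{D}^{\theta}(|\xi|^{\alpha}\psi)(\eta)$), whereas the identity above naturally produces $|\zeta|^{\alpha}-|\eta|^{\alpha}$ times a cutoff. I would reconcile these by writing $\psi(\zeta)(|\zeta|^{\alpha}-|\eta|^{\alpha})=(|\zeta|^{\alpha}\psi(\zeta)-|\eta|^{\alpha}\psi(\eta))+|\eta|^{\alpha}(\psi(\eta)-\psi(\zeta))$ and absorbing the second piece, after multiplying by the bounded weight $\frac{1}{(1+|\eta|^{\alpha})(1+|\zeta|^{\alpha})}\le\frac{1}{1+|\eta|^{\alpha}}$, into a multiple of $\mathcal{D}^{\theta}(\psi)(\eta)$ (the factor $\frac{|\eta|^{\alpha}}{1+|\eta|^{\alpha}}\le 1$ being harmless). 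This reduces everything to the triangle inequality for $\mathcal{D}^{\theta}$ and the uniform bounds $0\le\psi\le1$, $\frac{1}{1+|\cdot|^{\alpha}}\le1$, giving \eqref{negsteinderiv2} with implicit constant independent of $\eta$. Since all manipulations are pointwise in $\eta$ and purely algebraic, no delicate estimate is needed beyond this careful splitting; the only care is to ensure the cutoff $\psi$ is carried consistently so that the terms genuinely match the two Stein derivatives on the right-hand side.
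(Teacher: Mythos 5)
Your proposal is correct and follows essentially the same route as the paper: both reduce the claim to the pointwise inequality $\big|\tfrac{\psi(\xi)}{1+|\xi|^{\alpha}}-\tfrac{\psi(\eta)}{1+|\eta|^{\alpha}}\big|\lesssim|\psi(\xi)-\psi(\eta)|+\big||\xi|^{\alpha}\psi(\xi)-|\eta|^{\alpha}\psi(\eta)\big|$ via an algebraic splitting of the difference with the uniform bound $\tfrac{1}{1+|\cdot|^{\alpha}}\le 1$, and then apply Minkowski's inequality in $L^2\!\big(d\zeta/|\eta-\zeta|^{1+2\theta}\big)$. The only cosmetic difference is that the paper puts everything over the common denominator $(1+|\xi|^{\alpha})(1+|\eta|^{\alpha})$ at the outset, whereas you use a telescoping product-difference identity followed by the add-and-subtract step; both yield the same two terms.
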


\begin{proof}
Setting $G(\xi):=\frac{\psi(\xi)}{1+|\xi|^{\alpha}}$ and grouping factors, it is seen that\begin{equation*}
\begin{aligned}
\big|G(\xi)-G(\eta)\big|=& \frac{|(1+|\eta|^{\alpha})\psi(\xi)-(1+|\xi|^{\alpha})\psi(\eta)|}{(1+|\xi|^{\alpha})(1+|\eta|^{\alpha})} \\
\leq & \frac{(1+|\xi|^{\alpha}+|\eta|^{\alpha})|\psi(\xi)-\psi(\eta)|}{(1+|\xi|^{\alpha})(1+|\eta|^{\alpha})}\\
&+\frac{||\xi|^{\alpha}\psi(\xi)-|\eta|^{\alpha}\psi(\eta)|}{(1+|\xi|^{\alpha})(1+|\eta|^{\alpha})}, 
\end{aligned}    
\end{equation*}
for any $\xi,\eta \in \mathbb{R}$. Hence, we get
\begin{equation*}
\big|G(\xi)-G(\eta)\big|\lesssim |\psi(\xi)-\psi(\eta)| +||\xi|^{\alpha}\psi(\xi)-|\eta|^{\alpha}\psi(\eta)|. 
\end{equation*}
By squaring the above identity, multiplying the resulting expression by $|\xi-\eta|^{-1-2\theta}$, and integrating over $\xi$ in $\mathbb{R}$, the desired result is a consequence of the definition of the fractional derivative in Theorem \ref{theorem9b}.
\end{proof}

We conclude this section with the following proposition which will be useful for controlling $(1+|\xi|^{\alpha})^{-j}$.

\begin{proposition}\label{negativeleibnizprop} Let $\alpha>0$, $0<\theta<1$, $j\geq 1$ be integer. Let $g$ be a measurable function with compact support, and $\psi$ be a compactly supported smooth cut-off function such that $g\psi=g$ almost everywhere. Assume $g, \mathcal{D}^{\theta}(g), g\mathcal{D}^{\theta}(|\xi|^{\alpha}\psi) \in L^2(\mathbb{R})$. Then it follows
\begin{equation}
\|\mathcal{D}^{\theta}\Big(\frac{g}{(1+|\xi|^{\alpha})^j}\Big)\|_2\lesssim  \|g\|_2+\|g\mathcal{D}^{\theta}(|\xi|^{\alpha}\psi)\|_2+\|\mathcal{D^{\theta}}(g)\|_2.
\end{equation}    
\end{proposition}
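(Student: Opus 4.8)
The plan is to factor out the cutoff and reduce everything to a single application of the $\mathcal{D}^\theta$-product rule \eqref{pointwise2}. Since $g=g\psi$ almost everywhere, I would first write
\[
\frac{g}{(1+|\xi|^{\alpha})^{j}}=g\,H_{j},\qquad H_{j}(\xi):=\frac{\psi(\xi)}{(1+|\xi|^{\alpha})^{j}},
\]
and then apply \eqref{pointwise2} to the two factors $g$ and $H_{j}$ to get
\[
\|\mathcal{D}^{\theta}(g H_{j})\|_{2}\le \|H_{j}\,\mathcal{D}^{\theta}g\|_{2}+\|g\,\mathcal{D}^{\theta}H_{j}\|_{2}.
\]
The first term is harmless: because $0\le (1+|\xi|^{\alpha})^{-j}\le 1$ we have $\|H_{j}\|_{L^{\infty}}\le\|\psi\|_{L^{\infty}}\lesssim 1$, so $\|H_{j}\mathcal{D}^{\theta}g\|_{2}\lesssim\|\mathcal{D}^{\theta}g\|_{2}$. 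Everything therefore reduces to controlling $\|g\,\mathcal{D}^{\theta}H_{j}\|_{2}$.

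The core of the argument is the pointwise bound
\[
\mathcal{D}^{\theta}H_{j}(\eta)\lesssim_{j}\mathcal{D}^{\theta}(\psi)(\eta)+\mathcal{D}^{\theta}(|\xi|^{\alpha}\psi)(\eta),
\]
which extends Proposition \ref{prop5} (the case $j=1$) to all $j\ge 1$. Following the strategy of that proposition, I would derive it from the elementary pointwise difference estimate
\[
|H_{j}(\xi)-H_{j}(\eta)|\lesssim_{j}|\psi(\xi)-\psi(\eta)|+\big||\xi|^{\alpha}\psi(\xi)-|\eta|^{\alpha}\psi(\eta)\big|;
\]
squaring, dividing by $|\xi-\eta|^{1+2\theta}$, integrating in $\xi$, and using $(a+b)^2\le 2a^2+2b^2$ together with the definition \eqref{d1b} then produces the displayed bound on $\mathcal{D}^\theta H_j$.

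To prove the difference estimate, set $a=1+|\xi|^{\alpha}$, $b=1+|\eta|^{\alpha}$, $u=\psi(\xi)$, $v=\psi(\eta)$, so that $H_{j}(\xi)-H_{j}(\eta)=(ub^{j}-va^{j})/(a^{j}b^{j})$, and split the numerator as
\[
u b^{j}-v a^{j}=(u-v)\,b^{j}+v\,(b^{j}-a^{j}).
\]
The first piece gives $|(u-v)b^{j}|/(a^{j}b^{j})=|u-v|/a^{j}\le|\psi(\xi)-\psi(\eta)|$ since $a\ge 1$. For the second, the telescoping identity $b^{j}-a^{j}=(b-a)\sum_{k=0}^{j-1}a^{k}b^{j-1-k}$ together with $a,b\ge 1$ yields $|b^{j}-a^{j}|/(a^{j}b^{j})\le j\,\big||\xi|^{\alpha}-|\eta|^{\alpha}\big|/(ab)$, and the rearrangement
\[
\psi(\eta)\big(|\xi|^{\alpha}-|\eta|^{\alpha}\big)=|\xi|^{\alpha}\big(\psi(\eta)-\psi(\xi)\big)+\big(|\xi|^{\alpha}\psi(\xi)-|\eta|^{\alpha}\psi(\eta)\big)
\]
reattaches the cutoff; the trivial bounds $|\xi|^{\alpha}/(ab)\le 1$ and $1/(ab)\le 1$ then give exactly the two desired terms. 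This reattachment of $\psi$ onto $|\xi|^{\alpha}$ is the one genuinely delicate step, and I expect it to be the main obstacle: without it one is left with the bare quantity $\big||\xi|^{\alpha}-|\eta|^{\alpha}\big|$, which carries no spatial localization and is not dominated by $\mathcal{D}^{\theta}(|\xi|^{\alpha}\psi)$.

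With the pointwise bound in hand, I would multiply by $|g|$ and take $L^2$ norms to obtain
\[
\|g\,\mathcal{D}^{\theta}H_{j}\|_{2}\lesssim \|g\,\mathcal{D}^{\theta}(\psi)\|_{2}+\|g\,\mathcal{D}^{\theta}(|\xi|^{\alpha}\psi)\|_{2}.
\]
Since $\psi$ is a smooth compactly supported cutoff, \eqref{Linftybound} gives $\|\mathcal{D}^{\theta}\psi\|_{L^{\infty}}\lesssim\|\psi\|_{L^{\infty}}+\|\partial_x\psi\|_{L^{\infty}}<\infty$, so the first term is $\lesssim\|g\|_{2}$, while the second is one of the quantities assumed finite in the hypotheses. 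Combining this with the reduction above yields $\|\mathcal{D}^{\theta}(g/(1+|\xi|^{\alpha})^{j})\|_{2}\lesssim\|g\|_{2}+\|g\,\mathcal{D}^{\theta}(|\xi|^{\alpha}\psi)\|_{2}+\|\mathcal{D}^{\theta}(g)\|_{2}$, which is the claim. Apart from the algebraic estimate in the preceding paragraph, the remaining steps are a routine assembly of the $\mathcal{D}^{\theta}$-product rule and the known $L^{\infty}$ bound for $\mathcal{D}^\theta\psi$.
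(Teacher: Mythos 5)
Your proof is correct, and it reaches the same conclusion by a slightly different organization of the same ingredients. The paper treats general $j$ by iterating the product rule \eqref{pointwise2}: it writes $g(1+|\xi|^{\alpha})^{-j}=\frac{\psi}{1+|\xi|^{\alpha}}\cdot\frac{g}{(1+|\xi|^{\alpha})^{j-1}}$, peels off one factor of $(1+|\xi|^{\alpha})^{-1}$ at a time (only the case $j=2$ is written out), and invokes Proposition \ref{prop5} — the $j=1$ pointwise difference bound — at each stage. You instead apply \eqref{pointwise2} exactly once, to $g\cdot H_j$ with $H_j=\psi(1+|\xi|^{\alpha})^{-j}$, and carry the whole burden in a generalized version of Proposition \ref{prop5} valid for all $j$, obtained from the telescoping identity $b^{j}-a^{j}=(b-a)\sum_{k=0}^{j-1}a^{k}b^{j-1-k}$ together with $a,b\geq 1$. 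Your algebra checks out (the splitting $ub^{j}-va^{j}=(u-v)b^{j}+v(b^{j}-a^{j})$, the bound $|b^{j}-a^{j}|/(a^{j}b^{j})\leq j\,|b-a|/(ab)$, and the reattachment of $\psi$ to $|\xi|^{\alpha}$ are all the direct analogues of what the paper does for $j=1$ in the proof of Proposition \ref{prop5}), and the final assembly via $\|H_j\|_{L^\infty}\lesssim 1$, the hypothesis $g\,\mathcal{D}^{\theta}(|\xi|^{\alpha}\psi)\in L^2$, and \eqref{Linftybound} for $\|\mathcal{D}^{\theta}\psi\|_{L^{\infty}}$ is the same as the paper's. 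What your route buys is a genuinely uniform treatment of all $j\geq 1$ in one pass, rather than the paper's induction-by-iteration which it only sketches beyond $j=2$; what it costs is having to redo (and mildly generalize) the pointwise lemma instead of quoting Proposition \ref{prop5} as a black box. Your remark that the reattachment of the cutoff is the one non-routine step is accurate: the bare difference $\bigl||\xi|^{\alpha}-|\eta|^{\alpha}\bigr|$ is not controlled by $\mathcal{D}^{\theta}(|\xi|^{\alpha}\psi)$, and this is precisely where the hypothesis $g\psi=g$ is used.
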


\begin{proof} We will only show the case $j=2$ as the others follow from our ideas below and using the type of Leibniz's rule \eqref{pointwise2}. Thus, two applications of \eqref{pointwise2} yield 
\begin{equation*}
\begin{aligned}
\|\mathcal{D}^{\theta}\Big(\frac{g}{(1+|\xi|^{\alpha})^2}\Big)\|_2\lesssim & \|\mathcal{D}^{\theta}\Big(\frac{\psi}{1+|\xi|^{\alpha}}\Big) \frac{g}{1+|\xi|^{\alpha}}\|_2\\
&+\|\frac{1}{1+|\xi|^{\alpha}}\|_{L^{\infty}}\|\mathcal{D}^{\theta}\Big(\frac{1}{1+|\xi|^{\alpha}}g\Big)\|_2 \\
\lesssim & \|\frac{1}{1+|\xi|^{\alpha}}\|_{L^{\infty}}\Big( \|\mathcal{D}^{\theta}\Big(\frac{\psi}{1+|\xi|^{\alpha}}\Big) g\|_2\\
& +\|\frac{1}{1+|\xi|^{\alpha}}\|_{L^{\infty}}\|\mathcal{D^{\theta}}(g)\|_2 \Big).
\end{aligned}    
\end{equation*}
Consequently, we use Proposition \ref{prop5} to get
\begin{equation*}
\begin{aligned}
\|\mathcal{D}^{\theta}\Big(\frac{g}{(1+|\xi|^{\alpha})^2}\Big)\|_2
\lesssim & \|\mathcal{D}^{\theta}\psi\|_{L^{\infty}} \|g\|_2+\|g\mathcal{D}^{\theta}(|\xi|^{\alpha}\psi)\|_2+\|\mathcal{D^{\theta}}(g)\|_2.
\end{aligned}    
\end{equation*}
    
\end{proof}

%%%%%%%%%%%%%%%%%%%%%%%%%%%%%%%%%%%%%%%%%%%%%%%%%%%%%%%%%%%%%%%%%%%%%%%%%%%%%%%%%%%%%%%%%%%%%%%%%%%%%%%%%%%%%%%%%%%%%%%%%%%%%%%%%%%%%%%%%%%%%%%%%%%%%%%%%%%%%%%%%%%%%%%%%%%%%%%%%%%%%%%%%%%%%%%%%%%%%%%%%%%%%%%%%%%%%%%%%%%%%%%%%%%%%%%%%%%%%%%%%%%%%%%%%%%%%%%

\section{Proof of Theorem \ref{Main2}}\label{SectionLWP}

We first recall the index $s_{\alpha}>0$, which is defined as $s_{\alpha}=1$,  if $\alpha\in [\frac{1}{2},1)$, and $s_{\alpha}=2-2\alpha$, if $\alpha\in (0,\frac{1}{2})$. In order to simplify the exposition of our arguments, we will divide the proof of Theorem \ref{Main2} into the following propositions:

\begin{proposition}\label{propTH1} Let $\alpha\in (0,1)$, $s>s_{\alpha}$, $r\in (0,1]$. Consider $\varphi \in Z_{s,r}$. Let $T>0$ and $u\in C([0,T];H^s(\mathbb{R}))$ be the unique solution of the Cauchy problem \eqref{fBBM} with initial condition $\varphi$ provided by Theorem \ref{theorem1}. Then, it follows
\begin{equation}
u\in C([0,T];H^{s}(\R)\cap L^2(|x|^{2r}\, dx)\equiv
Z_{s,r}).
\end{equation}
%where $T$ is given by the local theory in $H^s(\mathbb{R})$.
\end{proposition}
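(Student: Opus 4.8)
The plan is to upgrade the given $H^s$-solution by a weighted energy estimate, using truncated weights to keep every quantity finite and all constants independent of the truncation. Let $w_N$ be the truncated weight of \eqref{trun}, so that $w_N\nearrow \ji x\jd$, $w_N\geq 1$, and $\|\partial_x w_N\|_{L^\infty}+\|\partial_x^2 w_N\|_{L^\infty}\lesssim 1$ uniformly in $N$. Since $0<r\leq 1$, the function $w_N^{r}$ is bounded for each fixed $N$ and, crucially, $\|\partial_x(w_N^{r})\|_{L^\infty}=\|r\,w_N^{r-1}\partial_x w_N\|_{L^\infty}\lesssim 1$ uniformly in $N$ (this is exactly where $r\leq 1$ is used). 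Because $w_N^{r}$ is bounded, $w_N^{r}u\in C([0,T];L^2)$, and writing the equation as $\partial_t u=Au+A(u^2)$ the map $t\mapsto\|w_N^{r}u(t)\|_2^2$ is differentiable with
\begin{equation*}
\tfrac12\tfrac{d}{dt}\|w_N^{r}u\|_2^2=\int (w_N^{r}u)\,(w_N^{r}Au)\,dx+\int (w_N^{r}u)\,(w_N^{r}A(u^2))\,dx.
\end{equation*}
The goal is a differential inequality for $y(t):=\|w_N^{r}u(t)\|_2$ with constants controlled by $M:=\sup_{[0,T]}\|u\|_{s,2}<\infty$ and independent of $N$, after which Gronwall and a limit $N\to\infty$ finish the argument.

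For the linear term I would use that $A=-\partial_x(1+D^\alpha)^{-1}$ has purely imaginary symbol and hence is skew-adjoint, so $\int f\,Af\,dx=0$ for real $f$. Commuting, $w_N^{r}Au=A(w_N^{r}u)+[A,w_N^{r}]u$, whence $\int (w_N^{r}u)A(w_N^{r}u)\,dx=0$ and, by the key commutator estimate of Lemma \ref{commA},
\begin{equation*}
\Big|\int (w_N^{r}u)\,(w_N^{r}Au)\,dx\Big|=\Big|\int (w_N^{r}u)\,[A,w_N^{r}]u\,dx\Big|\lesssim \|w_N^{r}u\|_2\,\|\partial_x(w_N^{r})\|_{L^\infty}\|u\|_2\lesssim \|w_N^{r}u\|_2\,\|u\|_{s,2}.
\end{equation*}

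The nonlinear term is the main obstacle. The naive route—bounding $\|w_N^{r}A(u^2)\|_2\leq \|D^{1-\alpha}(w_N^{r}u^2)\|_2$ and distributing the fractional derivative by Leibniz—produces the factor $\|D^{1-\alpha}(w_N^{r}u)\|_2$, i.e. weight $r$ together with $1-\alpha$ derivatives on the same function; this pair lies strictly outside the interpolation region of $Z_{s,r}=H^s\cap L^2(\ji x\jd^{2r})$ governed by Lemma \ref{lemma1}, so it is not controllable by $\|w_N^{r}u\|_2$ and $\|u\|_{s,2}$. The resolution is to keep the effective derivative on the \emph{unweighted} factor by commuting $A$ past $w_N^{r}$ twice. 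First, $w_N^{r}A(u^2)=A(w_N^{r}u^2)+[A,w_N^{r}]u^2$, and the commutator contributes $\lesssim\|w_N^{r}u\|_2\,\|u\|_\infty\|u\|_2$ via Lemma \ref{commA}. For the remaining piece I would use skew-adjointness and commute again, $A(w_N^{r}u)=w_N^{r}Au+[A,w_N^{r}]u$, to get
\begin{equation*}
\int (w_N^{r}u)\,A(w_N^{r}u^2)\,dx=-\int A(w_N^{r}u)\,(w_N^{r}u)\,u\,dx=-\int w_N^{2r}u^2\,(Au)\,dx-\int [A,w_N^{r}]u\,(w_N^{r}u)\,u\,dx.
\end{equation*}
The first integral is bounded by $\|Au\|_{L^\infty}\|w_N^{r}u\|_2^2$, and here I would invoke $\|Au\|_{L^\infty}\lesssim\|u\|_{s,2}$, valid precisely because $s>3/2-\alpha$ (the symbol $|\xi|/(1+|\xi|^\alpha)\lesssim|\xi|^{1-\alpha}$ forces the threshold $s>(1-\alpha)+\tfrac12$); this is guaranteed by $s>s_\alpha$ in both ranges of $\alpha$, and is exactly what the definition of $s_\alpha$ encodes. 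The second integral is $\lesssim\|u\|_\infty\|[A,w_N^{r}]u\|_2\|w_N^{r}u\|_2\lesssim\|u\|_\infty\|u\|_2\|w_N^{r}u\|_2$ by Lemma \ref{commA}. Collecting everything yields
\begin{equation*}
\tfrac12\tfrac{d}{dt}\|w_N^{r}u\|_2^2\lesssim \|u\|_{s,2}\,\|w_N^{r}u\|_2^2+\big(\|u\|_{s,2}^2+\|u\|_{s,2}\big)\|w_N^{r}u\|_2,
\end{equation*}
with implicit constants independent of $N$.

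Finally I would close by Gronwall: the above gives $y'\lesssim M\,y+(M^2+M)$, hence $\|w_N^{r}u(t)\|_2\leq C(T,M,\|\ji x\jd^{r}\varphi\|_2)$ uniformly in $N$ and $t\in[0,T]$ (note $y(0)=\|w_N^{r}\varphi\|_2\leq\|\ji x\jd^{r}\varphi\|_2<\infty$). Letting $N\to\infty$ and using monotone convergence/Fatou then gives $u\in L^\infty([0,T];Z_{s,r})$. To promote this to $u\in C([0,T];Z_{s,r})$ I would combine the already-known continuity $u\in C([0,T];H^s)$, which yields weak continuity of $t\mapsto \ji x\jd^{r}u(t)$ in $L^2$, with continuity of the norm $t\mapsto\|\ji x\jd^{r}u(t)\|_2$: the latter follows from the energy identity above (whose right-hand side is integrable in time after the uniform bound), and weak continuity together with norm continuity gives strong continuity. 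The only genuinely delicate point is the nonlinear estimate described above; the rest is routine once the double commutation of $A$ through the weight is in place.
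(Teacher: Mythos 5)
Your proof is correct, but it handles the nonlinear term by a genuinely different route than the paper. The paper bounds $\mathcal{B}_2=\int A(\wnt u^2)\,\wnt u\,dx$ by Cauchy--Schwarz and then estimates $\|A(\wnt u^2)\|_2\lesssim\|J^{1-\alpha}(w_N^{\theta/2}u)^2\|_2$, splitting the weight evenly between the two factors of $u^2$, distributing $J^{1-\alpha}$ by the fractional Leibniz rule (Lemma \ref{leibnizhomog}) with an $\alpha$-dependent choice of exponents $p,q$, and closing with Sobolev embedding plus the weighted interpolation Lemma \ref{lemma1}; this is where the case split $\alpha\in[\tfrac12,1)$ versus $\alpha\in(0,\tfrac12)$ and the threshold $s_\alpha=2-2\alpha$ enter. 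You instead commute $A$ through the weight a second time, use skew-adjointness to move $A$ onto a single unweighted copy of $u$, and conclude with $\|Au\|_{L^\infty}\lesssim\|u\|_{s,2}$ (valid for $s>\tfrac32-\alpha$, hence for $s>s_\alpha$ in both regimes) together with Lemma \ref{commA}; this avoids the Leibniz rule, the interpolation lemma, and the case analysis entirely, and in fact only requires $s>\tfrac32-\alpha$ even when $\alpha<\tfrac12$, which is weaker than the paper's $s_\alpha=2-2\alpha$ there (though that threshold is still needed elsewhere, e.g.\ in the proof of Theorem \ref{Main3}). What the paper's heavier machinery buys is reusability: the same Leibniz-plus-interpolation estimate for $\mathcal{B}_{2}$ reappears verbatim for the term $\mathcal{B}_5$ in Proposition \ref{propTH3} and in the group estimates of Section \ref{SectUnique}, whereas your integration-by-parts trick is specific to the quadratic structure being paired against $\wnt u$ itself. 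Minor quibbles only: your identity $w_N^{r}A(u^2)=A(w_N^{r}u^2)+[A,w_N^{r}]u^2$ has the commutator with the wrong sign (immaterial for the bounds), and the ``naive route'' you dismiss is not actually what the paper does, since the paper places only half the weight on each factor precisely so that Lemma \ref{lemma1} applies.
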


\begin{proposition}\label{propTH2} Let $\alpha\in (0,1)$, $s>s_{\alpha}$, $r \in(1,\frac{3}{2}+\alpha)$ if $\alpha\in (0,\frac{1}{2}]$, $r\in(1,2]$ if $\alpha\in (\frac{1}{2},1)$. Consider $\varphi \in Z_{s,r}$. Let $T>0$ and $u\in C([0,T];H^s(\mathbb{R}))$ be the unique solution of the Cauchy problem \eqref{fBBM} with initial condition $\varphi$ provided by Theorem \ref{theorem1}. Then, it follows
\begin{equation}
u\in C([0,T];Z_{s,r}).
\end{equation}
\end{proposition}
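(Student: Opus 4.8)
The plan is to propagate the weight along the integral equation \eqref{IE}, bootstrapping on Proposition \ref{propTH1}. Since any $\varphi\in Z_{s,r}$ with $r\in(1,2]$ also lies in $Z_{s,\rho}$ for every $\rho\le 1$, Proposition \ref{propTH1} already gives $u\in C([0,T];Z_{s,\rho})$ for all $\rho\le1$; in particular the \emph{lower} weight $\langle x\rangle^{r-1}u$ (note $r-1\le1$) is finite and continuous on $[0,T]$, and $M:=\sup_{[0,T]}\|u(\tau)\|_{s,2}<\infty$ by Theorem \ref{theorem1}. It therefore suffices to produce an a priori bound for $\sup_{[0,T]}\|\langle x\rangle^r u(t)\|_2$ and then to upgrade $L^\infty_T$ to $C([0,T];Z_{s,r})$. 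To keep every integral finite I would first replace $\langle x\rangle^r$ by the truncated weights $w_N^r$ of \eqref{trun} (bounded for each $N$, with $|\partial_x w_N^r|\lesssim w_N^{r-1}$ uniformly in $N$), obtain a bound independent of $N$, and pass to the limit by monotone convergence. Applying $w_N^r$ to \eqref{IE} splits the estimate into the free term $\|w_N^r e^{tA}\varphi\|_2$ and the Duhamel term $\int_0^t\|w_N^r e^{(t-\tau)A}A(u^2)(\tau)\|_2\,d\tau$.

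For the free term I would work in Fourier variables, where, by Theorem \ref{theorem9b}, \eqref{pointwise2a} and Plancherel, the weight $\langle x\rangle^r$ is captured (modulo lower-order pieces) by the operator $\mathcal{D}^r$ acting on $\widehat{e^{tA}\varphi}(\xi)=F(t,\xi)\widehat\varphi(\xi)$, with $F(t,\xi)=e^{-ia(\xi)t}$. Writing $r=1+(r-1)$, differentiating once in $\xi$, and invoking the Leibniz rule \eqref{pointwise2}, the problem reduces to controlling in $L^2$ the four terms $\mathcal{D}^{r-1}(\partial_\xi F)\,\widehat\varphi$, $\partial_\xi F\,\mathcal{D}^{r-1}\widehat\varphi$, $\mathcal{D}^{r-1}F\,\partial_\xi\widehat\varphi$ and $F\,\mathcal{D}^{r-1}\partial_\xi\widehat\varphi$. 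The last three are benign: $|F|=1$, $\partial_\xi F$ is bounded by \eqref{der1}, $\|\mathcal{D}^{r-1}F\|_{L^\infty}\lesssim 1+t$ by \eqref{Linftybound} and \eqref{der1}, and the remaining factors $\mathcal{D}^{r-1}\widehat\varphi$, $\partial_\xi\widehat\varphi=\widehat{-ix\varphi}$ and $\mathcal{D}^{r-1}\partial_\xi\widehat\varphi$ all correspond to weights of order $\le r$ on $\varphi$, hence are bounded by $\|\varphi\|_{Z_{s,r}}$. The one genuinely delicate term is $\mathcal{D}^{r-1}(\partial_\xi F)\,\widehat\varphi$. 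By \eqref{der1} one has $\partial_\xi F=-it\,\tfrac{1+(1-\alpha)|\xi|^\alpha}{(1+|\xi|^\alpha)^2}F$, so Propositions \ref{prop5} and \ref{negativeleibnizprop} reduce its fractional derivative to $\mathcal{D}^{r-1}(|\xi|^\alpha\psi)$, and Proposition \ref{prop3} shows that $\mathcal{D}^{r-1}(|\xi|^\alpha\psi)\in L^2$ near the origin \emph{exactly} when $r-1<\alpha+\tfrac12$, i.e.\ $r<\tfrac32+\alpha$. This is the main obstacle and the precise origin of the sharp decay exponent: the low-frequency singularity of the group symbol obstructs any faster decay (and for $\alpha\in(\tfrac12,1)$ the restriction $r\le2$ forces $r-1\le1$, so the condition is automatic).

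For the Duhamel term I would commute $w_N^r$ first past the group and then past $A$. Pushing $w_N^r$ through $e^{(t-\tau)A}$ generates, via the symbol derivatives \eqref{der1}--\eqref{der2} of Proposition \ref{derivatives}, only factors of $(t-\tau)$ and weights of order $\le r$, whose low-frequency singularities are controlled under the same condition $r<\tfrac32+\alpha$; pushing $w_N^r$ through $A=-\partial_x(1+D^\alpha)^{-1}$ produces the commutator $[A,w_N^r]$, which I would analyze through the identity \eqref{eq2} from the proof of Lemma \ref{commA}. In that identity the term $(1+D^\alpha)^{-1}(\partial_x w_N^r\cdot f)$ is bounded by $\|w_N^{r-1}f\|_2\le\|\langle x\rangle^{r-1}f\|_2$ uniformly in $N$ (and hence controlled through Proposition \ref{propTH1}), while the term $(1+D^\alpha)^{-1}[D^\alpha,w_N^r]Af$ must be estimated \emph{keeping the weight} $w_N^{r-1}$ attached rather than losing it to $\|\partial_x w_N^r\|_{L^\infty}\sim N^{r-1}$; this refined, uniform-in-$N$ commutator bound, obtained from the fractional-derivative estimates of Propositions \ref{prop3} and \ref{prop5}, is the technical crux of the nonlinear step and is again where $r<\tfrac32+\alpha$ enters. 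It then remains to bound quantities of the form $\|w_N^r A(u^2)\|_2$: writing $u^2=u\cdot u$, using $\|u\|_{L^\infty}\lesssim\|u\|_{s,2}$ (Sobolev), one factor becomes $\|w_N^r u\|_2$ while the order-$(1-\alpha)$ loss carried by $A$ leaves a mixed weight/regularity term that I would control by the interpolation inequality \eqref{complex2} of Lemma \ref{lemma1}; requiring this interpolation to close is exactly what dictates $s>s_\alpha$ (the larger value $s_\alpha=2-2\alpha$ for $\alpha<\tfrac12$ reflecting the larger order $1-\alpha$ of $A$). Collecting everything gives $\|w_N^r u(t)\|_2\le C_1(M,T)\,(1+\|\varphi\|_{Z_{s,r}})+C_2(M)\int_0^t\|w_N^r u(\tau)\|_2\,d\tau$ with constants independent of $N$; Gronwall's inequality and $N\to\infty$ yield $u\in L^\infty([0,T];Z_{s,r})$, and the continuity $u\in C([0,T];Z_{s,r})$ follows from the standard combination of weak-in-time continuity with continuity of the $Z_{s,r}$-norm (equivalently, by re-running the above estimate on time increments).
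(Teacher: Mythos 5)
Your route differs from the paper's: you propagate the weight through the Duhamel formula \eqref{IE} using weighted estimates for the group $\{e^{tA}\}$ obtained from Fourier symbol computations, whereas the paper proves Proposition \ref{propTH2} (by the same scheme as Propositions \ref{propTH1} and \ref{propTH3}) via differential energy estimates: multiply the equation by the squared weight, exploit the skew-symmetry of $A$, and estimate the resulting commutators. Your treatment of the free term is sound and is essentially what the paper itself does later, in Proposition \ref{Propwightedineq} and the proof of Theorem \ref{Main3}; in particular you correctly locate the obstruction $r<\tfrac32+\alpha$ in the low-frequency behaviour of $\mathcal{D}^{r-1}\big(\tfrac{1+(1-\alpha)|\xi|^{\alpha}}{(1+|\xi|^{\alpha})^{2}}\psi\big)\sim|\xi|^{\alpha-(r-1)}$ near the origin.

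The gap is in the nonlinear step. You route the weight past $A$ through the commutator $[A,w_N^{r}]$ with $r>1$ and, via the identity \eqref{eq2}, reduce matters to $(1+D^{\alpha})^{-1}[D^{\alpha},w_N^{r}]Af$; you then assert a ``refined, uniform-in-$N$'' bound that keeps a factor $w_N^{r-1}$ attached to $f$ and claim it follows from Propositions \ref{prop3} and \ref{prop5}. It does not: those propositions estimate Stein derivatives of explicit radial Fourier multipliers such as $|\xi|^{\alpha}\psi$ and $(1+|\xi|^{\alpha})^{-1}\psi$, not commutators of $D^{\alpha}$ with spatial weights, and Lemma \ref{commA} only controls $[A,g]$ by $\|\partial_x g\|_{L^{\infty}}$, which for $g=w_N^{r}$ with $r>1$ is of size $N^{r-1}$ and blows up as $N\to\infty$. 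This is precisely the point the paper's argument is engineered to avoid: for $r=1+\theta$, $\theta\in(0,1]$ (resp.\ $r=2+\theta$ in Proposition \ref{propTH3}), the weight is written as $x\,\wnt$ (resp.\ $x^{2}\wnt$), the polynomial factor is commuted with $A$ exactly on the Fourier side --- producing the bounded operators $A_{1}$, $A_{3}$ with symbols built from $a'(\xi)$, $a''(\xi)$ as in Proposition \ref{derivatives}, which are then estimated by Propositions \ref{prop3}--\ref{negativeleibnizprop} --- and Lemma \ref{commA} is applied only to $[\wnt,A]$, where $|\partial_x\wnt|\lesssim1$ uniformly in $N$. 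If you either adopt this splitting, or bypass the commutator altogether by estimating $\|w_N^{r}A(u^{2})\|_{2}$ directly in Fourier variables (Leibniz on the symbol $\xi(1+|\xi|^{\alpha})^{-1}$ together with the interpolation \eqref{complex2}, as you already do for the regularity loss), the rest of your argument closes.
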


\begin{proposition}\label{propTH3} Let $\alpha \in (\frac{1}{2},1)$, $s>s_{\alpha}$, and  $r\in(2,\frac{3}{2}+\alpha)$. Consider $\varphi \in Z_{s,r}$. Let $T>0$ and $u\in C([0,T];H^s(\mathbb{R}))$ be the unique solution of the Cauchy problem \eqref{fBBM} with initial condition $\varphi$ provided by Theorem \ref{theorem1}. Then, it follows
\begin{equation}
u\in C([0,T];Z_{s,r}).
\end{equation}
\end{proposition}

As stated in the above propositions, the existence and uniqueness of local solutions for the IVP \eqref{fBBM} in $H^s(\mathbb{R})$, $s>s_{\alpha}$ is granted by Theorem \ref{theorem1}. Thus, the preceding propositions focus on showing that solutions of \eqref{fBBM} persist in space $L^2(|x|^{2r}\, dx)$.  Moreover, we note that regardless of the magnitude of the dispersion parameter $\alpha\in (0,1)$, solutions of the Cauchy problem \eqref{fBBM} always propagate weights $r$ within $(0,1]$. Such a result is described in Proposition \ref{propTH1}. However, the results of Propositions \ref{propTH2} and \ref{propTH3} establish that solutions of fBBM propagate fractional polynomial weights strictly less than $\frac{3}{2}+\alpha$. Therefore, we have that Propositions \ref{propTH1} and \ref{propTH2} prove Theorem \ref{Main2} for dispersions $\alpha\in (0,\frac{1}{2}]$; and at the same time Propositions \ref{propTH1}, \ref{propTH2}, and \ref{propTH3}  prove Theorem \ref{Main2} for the case $\alpha\in (\frac{1}{2},1)$.

\begin{remark}
 Due to the similarity of our arguments, we will only prove Propositions \ref{propTH1} and \ref{propTH3}. The deduction of Proposition \ref{propTH2} follows from similar arguments.
\end{remark}

\begin{proof}[Proof of Proposition \ref{propTH1}]

We use the differential equation and the local theory established in \cite{LiPiSa} such that
$u\in C([0,T]:H^{s}(\R))$ with $s>s_{\alpha}$ exists on the time interval $[0,T]$ and it is the limit of smooth solutions.  As a consequence, we note that equation in \eqref{fBBM} is equivalent to 
\begin{equation*}
  \partial_t u -Au-A(u^2)=0. 
\end{equation*}
Thus, in the following, when we mention energy estimates for equation \eqref{fBBM}, we refer to the equivalent equation above. 

Next, we define for $\theta\in(0,1]$
\begin{equation}\label{trun}
w_{N}^{\theta}=
\begin{cases}
\ji x\jd^{\theta}=(1+x^2)^{\frac{\theta}{2}}, \text{\hskip1cm if}\;\; |x|\le N,\\
(2N)^{\theta}, \text{\hskip85pt if}\;\; |x|\ge 3N,
\end{cases}
\end{equation}
with $w^{\theta}_N$ smooth, even, nondecreasing for $x\ge 0$.

Here we set $r=\theta\in [0,1]$. We perform the usual energy estimates by multiplying the equation in \eqref{fBBM} by $(\wnt)^2\,u$ and integrating it into the
$x$-variable in order to obtain
\begin{equation*}\label{e1.2}
\frac12 \frac{d}{dt}\int (\wnt\,u)^2\,dx-
\underset{\ma}{\underbrace{\int \
\wnt\,Au\,\wnt\,u\,dx}}-
\underset{\mb}{\underbrace{\int \wnt A(u^2)\,\wnt
u\,dx}}=0.
\end{equation*}
Regarding the integral $\ma$, we introduce a commutator and get
\begin{equation*}\label{e1.3}
 \ma=\int [\wnt;A]\,u\,\wnt u\,dx+\int A\wnt
u\,\wnt u\,dx,
\end{equation*}
and we notice that the second integral on the right hand side vanishes since the operator $A$ is skew-symmetric, and hence from Cauchy-Schwarz inequality and Lemma \ref{commA}, we get
\begin{equation*}\label{e1.4}
 |\ma|\lesssim \|[\wnt;A]\,u\|_2\|\wnt u\|_2\lesssim \|u\|_2\|\wnt u\|_2,
\end{equation*}
where we have used that $|\partial_x(\wnt)|\lesssim 1$ with implicit constant independent of $N$. On the other hand, for the integral $\mb$ we have in much similar way that
\begin{equation*}\label{e1.5}
 \mb=\int [\wnt;A]\,u^2\,\wnt u\,dx+\int A\wnt
u^2\,\,\wnt u\,dx=\mb_1+\mb_2.
\end{equation*}
Therefore, since $s> s_{\alpha}>1/2$,  we now obtain that
\begin{equation*}\label{e1.6}
 |\mb_1|\lesssim \|[\wnt;A]\,u^2\|_2\|\wnt u\|_2\lesssim \|u^2\|_{s,2}\|\wnt u\|_2\lesssim \|u\|^2_{s,2}\|\wnt u\|_2.
\end{equation*}
We remark these {\it a priori} estimates are valid for $\theta \in (0,1]$.

Regarding the integral $\mb_2$ we apply Cauchy-Schwarz inequality to obtain
\begin{equation*}\label{e1.7}
|\mb_2|\lesssim \|A\wnt u^2\|_2\|\wnt u\|_2,
\end{equation*}
and hence it remains to estimate the first norm in this last inequality. Now, the argument relies on applying Leibniz rule and appropriate Holder, Sobolev inequalities provided $\alpha \in (0,1)$, and the interpolation Lemma \ref{lemma1}. Indeed, an application of Lemma \ref{leibnizhomog} yields
\begin{equation*}\label{e1.8}
\begin{split}
\|A\wnt u^2\|_2&\lesssim \|J^{1-\alpha}(w_N^{\frac{\theta}{2}}\,u)^2\|_2\\
&\lesssim \|w_N^{\frac{\theta}{2}}\,u\|_{L^p}\|J^{1-\alpha}(w_N^{\frac{\theta}{2}}\,u)\|_{L^q}\\
&=:\mb_{2,2},
\end{split}
\end{equation*}
where $p, q\in (1,\infty]$ are such that $\frac{1}{2}=\frac{1}{p}+\frac{1}{q}$. In what follows, depending on the values of $\alpha\in (0,1)$, we will choose $p$, $q$ suitable for bounding the above inequality.

%In order to estimate the first norm we choose for Leibniz rule the values $s=1-\alpha,\,s_1=s_2=s/2, p_1=p_2=4,$ so that
%\begin{equation*}\label{e1.9}
%\begin{split}
% \mb_{2,1}&\lesssim \|J^\frac{1-\alpha}{2}w_N^{\theta/2}\,u\|^2_{L^4}\\
%          &\lesssim \|J^{1/4+\frac{1-\alpha}{2}}w_N^{\theta/2}\,u\|^2_{2}\\
%          &\lesssim \|J^{\frac{3-2\alpha}{2}\cdot\frac12}w_N^{\theta\cdot\frac12}\,u\|^2_{2}\\
%          &\lesssim \|J^{3/2-\alpha}\,u\|_2\|\wnt\,u\|_{2}\\
%          &\lesssim  \|u\|_{s,2}\|\wnt\,u\|_{2}.
%\end{split}
%\end{equation*}
%We remark that this estimate is valid for $\alpha\in(0,1)$ but in the case of the term $\mb_{2,2}$ we have to consider two cases, whether $\alpha \geq 1/2$ or $\alpha<1/2$.

$i)$ \underline {\bf Case $\alpha \in[\frac{1}{2},1)$:}

Within this range of $\alpha \in (\frac{1}{2},1)$, we choose $p=\frac{4}{2\alpha-1}$ and $q=\frac{4}{3-2\alpha}$ in $\mb_{2,2}$. Thus, we apply  Sobolev inequality with $\frac1p=\frac12-\frac{3-2\alpha}{4},$ and the interpolation Lemma \ref{lemma1} so that
\begin{equation*}\label{e1.11}
\begin{aligned}
  \|w_N^{\frac{\theta}{2}}\,u\|_{L^p}\lesssim \|J^{\frac{3-2\alpha}{2}\cdot\frac12}\big(w_N^{\theta\cdot\frac12}\,u\big)\|_{2}\lesssim & \|J^{\frac{3}{2}-\alpha}\,u\|^{\frac{1}{2}}_2\|\wnt\,u\|^{\frac{1}{2}}_{2}\\
\lesssim &  \|u\|^{\frac{1}{2}}_{s,2}\|\wnt\,u\|^{\frac{1}{2}}_{2}.   
\end{aligned}
\end{equation*}
By applying Sobolev inequality again, with $\frac1q=\frac12-\frac{2\alpha-1}{4}$ so that 

\begin{equation*}\label{e1.12}
\begin{aligned}
\|J^{1-\alpha}(w_N^{\frac{\theta}{2}}\,u)\|_{L^q}\lesssim \|J^{\frac{2\alpha-1}{4}+1-\alpha}(w_N^{\frac{\theta}{2}}\,u)\|_{2}\lesssim & \|J^{\frac{3-2\alpha}{2}\cdot\frac12}(w_N^{\theta\cdot\frac12}\,u)\|_{2}\\
\lesssim & \|u\|^{\frac{1}{2}}_{s,2}\|\wnt\,u\|^{\frac{1}{2}}_{2}.    
\end{aligned}
\end{equation*}
In the case $\alpha=\frac{1}{2}$, the natural choice is $p=\infty$, $q=2$. Then applying Sobolev embedding $H^{\frac{1}{2}^{+}}(\mathbb{R})\hookrightarrow L^{\infty}(\mathbb{R})$, and using that the regularity in this case satisfies $s>1$, the argument for estimating $\mb_{2,2}$ is similar to the one given above, so we omit its deduction. In summary, we have for the present case  
\begin{equation*}\label{e1.13}
|\mb_2|\lesssim \|u\|_{s,2}\|\wnt\,u\|^2_{2}
\end{equation*}
and 
\begin{equation*}\label{e1.14}
|\mb|\lesssim \|u\|_{s,2}\|\wnt\,u\|_{2}(\|u\|_{s,2}+\|\wnt\,u\|_{2}).
\end{equation*}

\begin{remark}\label{Remarkextension}
We note that the above arguments are valid for regularity $s>\frac{3}{2}-\alpha$, thus our proof of Theorem \ref{Main2} extends to $H^s(\mathbb{R})$ with $s>\frac{3}{2}-\alpha$, whenever $\alpha\in [\frac{1}{2},1)$.    
\end{remark}

$ii)$ \underline {\bf Case $0<\alpha < \frac{1}{2})$:}

In this case, we assume that the regularity of the initial data is taken to be $s>s_{\alpha}=2-2\alpha$ and proceed to complete the energy estimates for $\mb_{2,2}$ with the choice we have made for $\alpha=\frac{1}{2}$, that is, we consider $p=\infty$ and $q=2$ and apply Sobolev inequality and the interpolation Lemma \ref{lemma1} to get

\begin{equation*}\label{e1.15}
\begin{split}
 \mb_{2,2}&\lesssim \|w_N^{\frac{\theta}{2}}\,u\|_{L^\infty}\|J^{1-\alpha}\big(w_N^{\frac{\theta}{2}}\,u\big)\|_{2}\\
          &\lesssim \|J^{\frac{1+\e}{2}}\big(w_N^{\frac{\theta}{2}}\,u\big)\|_{2}\|J^{2(1-\alpha)\cdot\frac12}\big(w_N^{\theta\cdot\frac12}\,u\big)\|_{2}\\
          &\lesssim \|J^{1+\e}u\|^{\frac{1}{2}}_{2}\|\wnt\,u\|^{\frac{1}{2}}_{2}\|J^{2(1-\alpha)} u\|^{\frac{1}{2}}_{2}\|\wnt\,u\|^{\frac{1}{2}}_{2}\\
          &\lesssim \|u\|_{s,2}\|\wnt\,u\|_{2},
\end{split}
 \end{equation*}

provided $\e>0$ is small enough so that $2-2\alpha>1+\e$. This is the same estimate we obtained for the former case, and hence we conclude that for every $\alpha \in (0,1)$ and $\theta\in (0,1]$, it holds the weighted energy estimate:

\begin{equation*}\label{e1.16}
\frac12 \frac{d}{dt}\int (\wnt\,u)^2\,dx\lesssim |\ma|+|\mb|
\lesssim \|u\|_{s,2}\|\wnt\,u\|_{2}(1+\|u\|_{s,2}+\|\wnt\,u\|_{2})
\end{equation*}
and therefore

\begin{equation*}\label{e1.17}
\frac{d}{dt}(\|\wnt\,u\|_{2})\lesssim \|u\|_{s,2}(1+\|u\|_{s,2}+\|\wnt\,u\|_{2}),
\end{equation*}
where $0< \theta\leq 1$, and the implicit constant is independent of $N$. Consequently, using Gronwall's inequality, and letting $N\to \infty$, we obtain
\begin{equation*}
  u\in L^{\infty}([0,T],L^2(|x|^{2\theta}\, dx)).  
\end{equation*}
To deduce $u\in C([0,T],L^2(|x|^{2\theta}\, dx))$, one considers the sequence $(\wnt u)_{n\in \mathbb{Z}^{+}}\subset C([0,T],L^2(\mathbb{R}))$ and reapply the above argument to find that it is a Cauchy sequence. This completes the proof of Proposition \ref{propTH1}.

\end{proof}

%%%%%%%%%%%%%%%%%%%%%%%%%%%%%%%%%%%%%%%%%%%%%%%%%%%%%%%%%%%%%%%%%%%%%%%%%%%%%%%%%%%%%%%%%%%%%%%%%%%%%%%%%%%%%%

\begin{proof}[Proof of Proposition \ref{propTH3}]

We consider now the weights $r=2+\theta$, with $r=2+\theta<\frac32+\alpha$ or equivalently $\theta < \alpha-\frac12$ and $\alpha\in(\frac12,1)$. Then multiplying the equation in \eqref{fBBM} by $(x^2\wnt)^2u$ and integrating we get
\begin{equation*}\label{e1.33}
\frac12 \frac{d}{dt}\int (x^2\wnt\,u)^2\,dx-
\underset{\ma}{\underbrace{\int \
\wnt\,x^2Au\,x^2\wnt\,u\,dx}}-
\underset{\mb}{\underbrace{\int\wnt  x^2A(u^2)\,x^2\wnt
u\,dx}}=0.
\end{equation*}
It should be mentioned that given the validity of Proposition \ref{propTH2}, the above differential equation is justified by the fact that $\wnt$ is bounded and $u\in C([0,T]; Z_{s,2})$, i.e., $x^2 u\in C([0,T]; L^2(\mathbb{R}))$. 

Let us consider first the term $\ma$. We write 
\begin{equation*}\label{e1.34}
\begin{split}
 \ma&=\int \wnt [x^2;A]u\,x^2\wnt u\,dx+\int \wnt A(x^2u)\,x^2\wnt u\,dx\\
	&=2\int \wnt A_1(xu)\,x^2\wnt u\,dx+\alpha(\alpha+1)\int \wnt\hil D^{\alpha-1}A_3 (u)\,x^2\wnt u\,dx\\
	&+(1-\alpha)\alpha \int \wnt\hil D^{2\alpha-1}A_3(u)\,x^2\wnt u\,dx+\int  [\wnt;A](x^2u)\,x^2\wnt u\,dx\\
	&+\int A(\wnt x^2u)\,\wnt x^2u\,dx\\
    	&=\ma_1+\ma_2+\ma_3+\ma_4+\ma_5,
\end{split}
\end{equation*} 
where  $[x;A]u=A_1u=( a'(\xi) \hat{u})^\vee$, and the nonlocal operator $A_3$ has Fourier symbol $(1+|\xi|^\alpha)^{-3}$ (see notation in Proposition \ref{derivatives}). We also point out that the fifth integral $\ma_5$  vanishes again. Now, to estimate $\ma_1$, we need the following result:
\begin{claim}\label{ClaimA1}
Let $\alpha\in (0,1)$,  $0<\theta<\min\{1,\alpha+\frac{1}{2}\}$, and $f$ be a measurable function such that $\langle x\rangle f\in L^2(\mathbb{R})$. Then
\begin{equation}\label{conclestim1}
\begin{aligned}
 \|\langle x\rangle^{\theta} & A_1 f\|_2\lesssim \|\langle x\rangle f\|_{2}.    
\end{aligned}    
\end{equation}
\end{claim}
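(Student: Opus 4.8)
The plan is to reduce everything to a weighted $L^2$ estimate on the Fourier side for the multiplier $a'(\xi)=\frac{1+(1-\alpha)|\xi|^\alpha}{(1+|\xi|^\alpha)^2}$ of $A_1$ (recall $\widehat{A_1 f}=a'\hat f$, cf. \eqref{der1}). Since $\langle x\rangle^\theta\lesssim 1+|x|^\theta$, it suffices to bound $\|A_1 f\|_2$ and $\||x|^\theta A_1 f\|_2$ separately. The first is immediate: $a'$ is bounded (it equals $1$ at the origin and decays like $|\xi|^{-\alpha}$ at infinity), so $\|A_1 f\|_2\lesssim\|f\|_2\le\|\langle x\rangle f\|_2$. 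For the second I would pass to frequency: by Plancherel's identity (which intertwines multiplication by $|x|^\theta$ with $D^\theta$) together with \eqref{pointwise2a},
\[
\||x|^\theta A_1 f\|_2=c\,\|\mathcal{D}^\theta(a'\hat f)\|_2.
\]
I will also record that the hypothesis $\langle x\rangle f\in L^2$ gives $\hat f\in H^1(\mathbb{R})\hookrightarrow L^\infty$, with $\|\hat f\|_{L^\infty}\lesssim\|\hat f\|_{H^1}\sim\|\langle x\rangle f\|_2$.

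The key structural observation is the algebraic identity
\[
a'(\xi)=(1-\alpha)\frac{1}{1+|\xi|^\alpha}+\alpha\frac{1}{(1+|\xi|^\alpha)^2},
\]
together with a splitting $a'=a'\psi+a'(1-\psi)=:P+Q$, where $\psi$ is a smooth cut-off equal to $1$ near the origin. Since $\mathcal{D}^\theta$ is subadditive (the same Minkowski inequality underlying \eqref{pointwise2}), one has $\|\mathcal{D}^\theta(a'\hat f)\|_2\le\|\mathcal{D}^\theta(P\hat f)\|_2+\|\mathcal{D}^\theta(Q\hat f)\|_2$. The point of the split is that $P$ carries the non-Lipschitz singularity of $a'$ at $\xi=0$ but is compactly supported, whereas $Q$ is globally smooth with $Q,\partial_\xi Q\in L^\infty$ but is not integrable at infinity.

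For the tail $Q$ I would apply the pointwise Leibniz rule \eqref{pointwise2} and then \emph{avoid} any $L^2$ estimate on $\mathcal{D}^\theta Q$ by invoking \eqref{Linftybound}:
\[
\|\mathcal{D}^\theta(Q\hat f)\|_2\le\|Q\|_{L^\infty}\|\mathcal{D}^\theta\hat f\|_2+\|\mathcal{D}^\theta Q\|_{L^\infty}\|\hat f\|_2\lesssim(\|Q\|_{L^\infty}+\|\partial_\xi Q\|_{L^\infty})(\||x|^\theta f\|_2+\|f\|_2),
\]
which is $\lesssim\|\langle x\rangle f\|_2$ because $\|\mathcal{D}^\theta\hat f\|_2\sim\||x|^\theta f\|_2$ and $\theta\le1$. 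For the local part $P$, I again use \eqref{pointwise2}: the first term is controlled by $\|P\|_{L^\infty}\|\mathcal{D}^\theta\hat f\|_2\lesssim\||x|^\theta f\|_2$, and the second by $\|\hat f\|_{L^\infty}\|\mathcal{D}^\theta P\|_2$. The finiteness of $\|\mathcal{D}^\theta P\|_2$ is where the constraint $\theta<\alpha+\tfrac12$ enters: writing $P=(1-\alpha)\frac{\psi}{1+|\xi|^\alpha}+\alpha\frac{\psi}{(1+|\xi|^\alpha)^2}$ and applying Proposition \ref{negativeleibnizprop} with $g=\psi$ reduces matters to controlling $\|\psi\,\mathcal{D}^\theta(|\xi|^\alpha\psi_0)\|_2$, $\|\psi\|_2$, and $\|\mathcal{D}^\theta\psi\|_2$, the first of which is finite precisely when $\theta<\alpha+\tfrac12$ by the local behavior $\mathcal{D}^\theta(|\xi|^\alpha\psi_0)\sim|\eta|^{\alpha-\theta}$ from Proposition \ref{prop3}.

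The step I expect to be the main obstacle — and the reason the splitting is forced on us — is precisely that one cannot treat all of $a'$ at once by pulling $\hat f$ out in $L^\infty$ and writing $\|\hat f\,\mathcal{D}^\theta a'\|_2\le\|\hat f\|_{L^\infty}\|\mathcal{D}^\theta a'\|_2$: a direct computation of the Gagliardo seminorm shows $\mathcal{D}^\theta a'(\eta)\gtrsim|\eta|^{-\alpha-\theta}$ as $|\eta|\to\infty$, so $\mathcal{D}^\theta a'\notin L^2$ whenever $\alpha+\theta\le\tfrac12$ (in particular for small $\theta$ when $\alpha<\tfrac12$). Keeping the product $Q\hat f$ together and exploiting $Q\in W^{1,\infty}$ through \eqref{Linftybound} is exactly what circumvents this non-integrable tail, while the genuinely singular but compactly supported piece $P$ is handled by the Stein-derivative machinery of Propositions \ref{prop3} and \ref{negativeleibnizprop}, which is where the sharp threshold $\theta<\alpha+\tfrac12$ is produced.
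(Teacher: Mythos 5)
Your proof is correct and follows essentially the same route as the paper's: reduce to $\|\mathcal{D}^\theta(a'\hat f)\|_2$ via Plancherel and \eqref{pointwise2a}, split by a frequency cut-off, control the smooth non-compactly-supported tail through \eqref{pointwise2} and \eqref{Linftybound}, and control the compactly supported singular piece through Propositions \ref{prop3} and \ref{negativeleibnizprop}, which is exactly where $\theta<\alpha+\frac12$ enters. The only (cosmetic) difference is that you use the partial-fraction identity $a'=(1-\alpha)(1+|\xi|^\alpha)^{-1}+\alpha(1+|\xi|^\alpha)^{-2}$ and pull $\hat f$ out in $L^\infty$ against $\|\mathcal{D}^\theta P\|_2$, whereas the paper keeps $\hat f$ inside the products and iterates the Leibniz rule through the factorization $\frac{\widetilde\psi}{1+|\xi|^\alpha}$ via Proposition \ref{prop5}; both reductions are equivalent.
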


\begin{proof}[Proof of Claim \ref{ClaimA1}]
 Let $\psi$, $\widetilde{\psi}$ be two compactly
supported smooth cut-off functions with $\psi(\xi)=1$ whenever $|\xi|\leq 1$, and $\psi \widetilde{\psi}=\psi$. Since $A_1$ defines a bounded operator in $L^2(\mathbb{R})$, we use the Fourier transform to change fractional decay into fractional differentiation (Theorem \ref{theorem9b}) to obtain
\begin{equation*}\label{e1.22}
\begin{split}
 \|\langle x\rangle^{\theta} & A_1 f\|_2\\
 \lesssim & \|A_1 f\|_2+\| |x|^{\theta} A_1 f\|_2\\
\lesssim & \|f\|_2+\|D^{\theta}_{\xi}\big(\frac{1+(1-\alpha)|\xi|^\alpha}{(1+|\xi|^\alpha)^2}\psi \hat{f}\big)\|_2+\|D^{\theta}_{\xi}\big(\frac{1+(1-\alpha)|\xi|^\alpha}{(1+|\xi|^\alpha)^2}(1-\psi) \hat{f}\big)\|_2\\
\lesssim & \|f\|_2+\|\mathcal{D}_{\xi}^{\theta}\big(\frac{\widetilde{\psi}}{1+|\xi|^{\alpha}}\big)\frac{(1+(1-\alpha)|\xi|^\alpha)}{1+|\xi|^{\alpha}}\psi \hat{f}\|_2\\
& +\|\mathcal{D}_{\xi}^{\theta}\big(\frac{\widetilde{\psi}}{1+|\xi|^{\alpha}}\big)(1+(1-\alpha)|\xi|^\alpha)\psi \hat{f}\|_2\\
&+\|\mathcal{D}_{\xi}^{\theta}\big(1+(1-\alpha)|\xi|^\alpha\psi \big)\hat{f}\|_2 + \|\mathcal{D}_{\xi}^{\theta}\Big(\frac{1+(1-\alpha)|\xi|^\alpha}{(1+|\xi|^\alpha)^2}(1-\psi)\Big)\widehat{f}\|_2 \\
&+\|\mathcal{D}^{\theta}_{\xi} \hat{f}\|_2,
 \end{split}
\end{equation*}
where we have used several times properties \eqref{pointwise2}, that $\psi\widetilde{\psi}=\psi$ and the fact $(1+|\xi|^{\alpha})^{-1}\widetilde{\psi}\in L^{\infty}(\mathbb{R})$. Let us proceed with the estimate of the inequality above. Since $\theta<\alpha+\frac{1}{2}$, we use \eqref{negsteinderiv2} and Proposition \ref{prop3} to get
\begin{equation*}
\begin{aligned}
\|\mathcal{D}_{\xi}^{\theta}\big(\frac{\widetilde{\psi}}{1+|\xi|^{\alpha}}\big)\frac{(1+(1-\alpha)|\xi|^\alpha)}{1+|\xi|^{\alpha}}\psi \hat{f}\|_2\lesssim & \|\widehat{f}\|_2+\|\mathcal{D}_{\xi}^{\theta}(|\xi|^{\alpha}\widetilde{\psi})\psi \widehat{f}\|_2\\
\lesssim & \|\widehat{f}\|_2+\|\mathcal{D}_{\xi}^{\theta}(|\xi|^{\alpha}\widetilde{\psi})\psi\|_{2} \|\widehat{f}\|_{L^{\infty}}\\
\lesssim & \|\langle x \rangle f\|_{2}.
\end{aligned}    
\end{equation*}
Similarly, since $\psi=1$ in a neighborhood of the origin, we get from \eqref{negsteinderiv2}
\begin{equation*}
\begin{aligned}
\|\mathcal{D}_{\xi}^{\theta}\big(\frac{\widetilde{\psi}}{1+|\xi|^{\alpha}}\big)(1+(1-\alpha)|\xi|^\alpha)\psi \hat{f}\|_2 \lesssim & \|\widehat{f}\|_2+\|\mathcal{D}_{\xi}^{\theta}(|\xi|^{\alpha}\widetilde{\psi})\psi\|_{2} \|\widehat{f}\|_{L^{\infty}}\\
\lesssim & \|\langle x \rangle f\|_{2}.
\end{aligned}    
\end{equation*}
Using definition \eqref{d1b}, it is not hard to see
\begin{equation*}
\begin{aligned}
 \|\mathcal{D}_{\xi}^{\theta}\big(1+(1-\alpha)|\xi|^\alpha\psi \big)\hat{f}\|_2 \lesssim & \|\widehat{f}\|_2+\|\mathcal{D}_{\xi}^{\theta}(|\xi|^{\alpha}\psi)\widehat{f}\|_2\\
\lesssim & \|\langle x \rangle f\|_{2}. 
\end{aligned}    
\end{equation*}
We use property \eqref{Linftybound} to find

\begin{equation}\label{e1.23.1}
\begin{aligned}
\|\mathcal{D}_{\xi}^{\theta} \Big(\frac{1+(1-\alpha)|\xi|^\alpha}{(1+|\xi|^\alpha)^2}&(1-\psi)\Big)\widehat{f}\|_2 \\
\lesssim &\Big(\|\frac{1+(1-\alpha)|\xi|^\alpha}{(1+|\xi|^\alpha)^2}(1-\psi)\|_{L^{\infty}}\Big)\\
&+\|\partial_x\big(\frac{1+(1-\alpha)|\xi|^\alpha}{(1+|\xi|^\alpha)^2}(1-\psi)\big)\|_{L^{\infty}}\Big) \|\widehat{f}\|_2\\
\lesssim & \|f\|_{2}.
\end{aligned}    
\end{equation}
Collecting the previous estimates, we arrive at \eqref{conclestim1}.
\end{proof}

Using that $\wnt \leq \langle x\rangle^{\theta}$,  and that $0<\theta<\alpha-\frac{1}{2}<\alpha+\frac{1}{2}$, we can apply Claim \ref{ClaimA1} to get
\begin{equation*}\label{e1.36}
\begin{split}
|\ma_1|&\lesssim \|\wnt A_1\,(xu)\|_2\|x^2\wnt\,u\|_{2}\\
&\lesssim \|\langle x\rangle^{\theta} A_1\,(xu)\|_2\|x^2\wnt\,u\|_{2}\\
&\lesssim \|\langle x\rangle^{2} u\|_2\|x^2\wnt\,u\|_{2}.
\end{split}
\end{equation*}
Now, we take care of $\ma_2$. Cauchy-Schwarz inequality and the fact that for $0<\theta<\frac{1}{2}$ we know that $\wnt$ is a Muckenhoupt weight with $p=2$, so that $\mathcal H$ is a bounded operator in $L^2( (\wnt)^2\,  dx)$  with a constant independent of $N$ (see, Proposition 1 in \cite{FoPo}), it follows
\begin{equation*}\label{e1.40}
\begin{split}
|\ma_2|&\lesssim \|\wnt\hil D^{\alpha-1}A_3 u\|_2\|x^2\wnt\,u\|_{2}\\
    &\lesssim \|\wnt D^{\alpha-1}A_3 u\|_2\|x^2\wnt u\|_2\\
    &\lesssim (\|D^{\alpha-1}A_3 u\|_2+\||x|^\theta D^{\alpha-1}A_3\,u\|_2)\|x^2\wnt u\|_2\\
    &\lesssim (\ma_{2,1}+\ma_{2,2})\|x^2\wnt u\|_2.
\end{split}
\end{equation*}
Using that $A_3$ defines a bounded operator in $L^2$, and Sobolev embedding, we get
\begin{equation*}
 \begin{aligned}
\ma_{2,1}\lesssim & \||\xi|^{\alpha-1}\chi_{\{|\xi|\leq 1\}}\widehat{u}\|_2+\||\xi|^{\alpha-1}\chi_{\{|\xi|> 1\}}\widehat{u}\|_2\\
\lesssim & \||\xi|^{\alpha-1}\chi_{\{|\xi|\leq 1\}}\|_2\|\widehat{u}\|_{L^{\infty}}+\|\widehat{u}\|_2\\
\lesssim & \|\langle x \rangle u\|_2,
 \end{aligned}   
\end{equation*}
where we remark that $|\xi|^{\alpha-1}\chi_{\{|\xi|\leq 1\}}\in L^2(\mathbb{R})$ provided that $\alpha>\frac{1}{2}$.  In order to bound the term $\ma_{2,2},$ we proceed via Stein's derivative to find
\begin{equation*}\label{e1.41}
\begin{split}
   \ma_{2,2} &\lesssim \|\md^\theta (\frac{|\xi|^{\alpha-1}}{(1+|\xi|^\alpha)^3}\psi\hat{u})\|_2+\|\md^\theta (\frac{|\xi|^{\alpha-1}}{(1+|\xi|^\alpha)^3}(1-\psi)\hat{u})\|_2\\
   &\lesssim \ma_{2,2,1}+\ma_{2,2,2}.
   \end{split}
\end{equation*}
Let $\widetilde{\psi}$ be a smooth compactly supported function such that $\widetilde{\psi}\psi=\psi$. We apply Proposition \ref{negativeleibnizprop} with $g=|\xi|^{\alpha-1}\psi \widehat{u}$ to get
\begin{equation*}
\begin{aligned}
 \ma_{2,2,1}\lesssim \||\xi|^{\alpha-1}\psi \widehat{u}\|_{2}+\||\xi|^{\alpha-1}\psi \widehat{u}\mathcal{D}^{\theta}(|\xi|^{\alpha} \widetilde{\psi})\|_2+\|\mathcal{D}^{\theta}(|\xi|^{\alpha-1}\psi \widehat{u})\|_2  
\end{aligned}    
\end{equation*}
Let us estimate the above terms. The first one on the right-hand side of the previous inequality follows from the estimate for $\mathcal{A}_{2,1}$ above. Now, Proposition \ref{prop3}, the fact that $\alpha>\frac{1}{2}$ and that $2\alpha-\frac{1}{2}-\theta>0$ establish
\begin{equation*}
\begin{aligned}
\||\xi|^{\alpha-1}\psi \widehat{u}\mathcal{D}^{\theta}(|\xi|^{\alpha} \widetilde{\psi})\|_2 \lesssim & \||\xi|^{2\alpha-1-\theta}\psi \widehat{u}\|_2+\||\xi|^{\alpha-1}\psi \widehat{u}\|_2\\
\lesssim & \big(\||\xi|^{2\alpha-1-\theta}\psi \|_2+\||\xi|^{\alpha-1}\psi \|_2\big)\|\widehat{u}\|_{L^{\infty}}\\
\lesssim & \|\langle x\rangle u\|_2.
\end{aligned}    
\end{equation*}
By \eqref{pointwise2},  Proposition \ref{prop4}, we obtain that
   \begin{equation*}
\begin{split}
   \|\mathcal{D}^{\theta}(|\xi|^{\alpha-1}\psi \widehat{u})\|_2	 &\lesssim \|\md^\theta (|\xi|^{\alpha-1}\psi) \hat{u}\|_2+\||\xi|^{\alpha-1}\psi \md^\theta  \hat{u}\|_2\\
	&\lesssim  \|\md^\theta (|\xi|^{\alpha-1}\psi) \chi_{\{|\xi|\leq1\}}\hat{u}\|_2+\|\md^\theta (|\xi|^{\alpha-1}\psi)\chi_{\{|\xi|\geq1\}}\hat{u}\|_2\\
	&+\||\xi|^{\alpha-1}\psi\|_2\|\md^\theta \hat{u}\|_{L^\infty}\\
	&\lesssim \|\frac{\chi_{\{|\xi|\leq1\}}}{|\xi|^{1-\alpha+\theta}}\hat{u}\|_2 + \|\frac{\chi_{\{|\xi|\geq1\}}}{|\xi|^{1-\alpha+\theta}}\hat{u}\|_2+\|\md^\theta \hat{u}\|_{L^\infty}\\
	&\lesssim \|\frac{\chi_{\{|\xi|\leq1\}}}{|\xi|^{1-\alpha+\theta}}\|_2\|\hat{u}\|_{L^\infty}+\|\frac{\chi_{\{|\xi|\geq1\}}}{|\xi|^{1-\alpha+\theta}}\|_{L^\infty}\|\hat{u}\|_2+\|\langle x\rangle^2 u\|_2\\
	&\lesssim \|u\|_{s,2}+\|\langle x\rangle^2 u\|_2,
\end{split}
\end{equation*}
where we have used \eqref{Linftybound} and Sobolev embedding to control $\|\md^\theta \hat{u}\|_{L^\infty}$. We now proceed to estimate $\ma_{2,2,2}$. By using \eqref{pointwise2}, and \eqref{Linftybound}, we get
  \begin{equation*}
\begin{split}
   	\ma_{2,2,2}	 &\lesssim \|\md^\theta (\frac{|\xi|^{\alpha-1}(1-\psi)}{(1+|\xi|^\alpha)^3}) \hat{u}\|_2+\|\frac{|\xi|^{\alpha-1}(1-\psi)}{(1+|\xi|^\alpha)^3}\md^\theta  \hat{u}\|_2\\
   	&\lesssim  \|\md^\theta (\frac{|\xi|^{\alpha-1}(1-\psi)}{(1+|\xi|^\alpha)^3})\|_{L^{\infty}}\|\hat{u}\|_2+\||\xi|^{\alpha-1}(1-\psi)\|_{L^\infty}\|\md^\theta \hat{u}\|_2\\
	&\lesssim \Big(\|\frac{|\xi|^{\alpha-1}(1-\psi)}{(1+|\xi|^\alpha)^3}\|_{L^{\infty}}+\|\partial_x (\frac{|\xi|^{\alpha-1}(1-\psi)}{(1+|\xi|^\alpha)^3})\|_{L^{\infty}}\Big)\|\hat{u}\|_2+\|\md^\theta \hat{u}\|_2\\
	&\lesssim \|u\|_{s,2}+\|\langle x\rangle^2 u\|_2.
\end{split}
\end{equation*}
This completes the estimate of $\mathcal{A}_{2,2}$ and in turn the study of $\mathcal{A}_2$. In order to  handle the integral term $\ma_3$, we use Cauchy-Schwarz inequality and the continuity of the Hilbert transform with respect to the weight $\wnt$ when $0<\theta<1/2$ to find
\begin{equation*}
\begin{split}
|\ma_3|&\lesssim \|\wnt\hil D^{2\alpha-1}A_3 u\|_2\|x^2\wnt\,u\|_{2}\\
    &\lesssim \|\wnt D^{2\alpha-1}A_3 u\|_2\|x^2\wnt u\|_2\\
    &\lesssim (\|D^{2\alpha-1}A_3 u\|_2+\||x|^\theta D^{2\alpha-1}A_3\,u\|_2)\|x^2\wnt u\|_2\\
    &\lesssim (\ma_{3,1}+\ma_{3,2})\|x^2\wnt u\|_2.
\end{split}
\end{equation*}

It is apparent that $\ma_{3,1}\lesssim \|u\|_2 $ for $\alpha>1/2.$ On the other hand for $\ma_{3,2}$ we have
\begin{equation*}
\begin{split}
   \ma_{3,2} &\lesssim \|\md^\theta (\frac{|\xi|^{2\alpha-1}}{(1+|\xi|^\alpha)^3}\psi\hat{u})\|_2+\|\md^\theta (\frac{|\xi|^{2\alpha-1}}{(1+|\xi|^\alpha)^3}(1-\psi)\hat{u})\|_2\\
   &\lesssim \ma_{3,2,1}+\ma_{3,2,2}.
   \end{split}
\end{equation*}
The estimate for $\ma_{3,2,2}$ is similar to that of $\ma_{2,2,2}$, thus similar arguments show
  \begin{equation*}
\begin{split}
   	\ma_{3,2,2}	
	&\lesssim \|u\|_{s,2}+\|\langle x\rangle^2 u\|_2.
\end{split}
\end{equation*}
The estimate for $\ma_{3,2,1}$ resembles that of $\ma_{2,2,1}$. For completeness, let us show such an estimate.  We first apply Proposition \ref{negativeleibnizprop} to deduce
\begin{equation*}
\begin{aligned}
 \ma_{3,2,1}\lesssim \||\xi|^{2\alpha-1}\psi \widehat{u}\|_{2}+\||\xi|^{2\alpha-1}\psi \widehat{u}\mathcal{D}^{\theta}(|\xi|^{\alpha} \widetilde{\psi})\|_2+\|\mathcal{D}^{\theta}(|\xi|^{2\alpha-1}\psi \widehat{u})\|_2,  
\end{aligned}    
\end{equation*}
where $\widetilde{\psi}$ is a smooth function compactly supported such that $\widetilde{\psi}\psi=\psi$. Since $2\alpha-\frac{1}{2}>0$, the estimate for the first factor on the right-hand side of the expression above is similar to that of $\mathcal{A}_{2,1}$ above. Next, Proposition \ref{prop3} shows
\begin{equation*}
\begin{aligned}
\||\xi|^{2\alpha-1}\psi \widehat{u}\mathcal{D}^{\theta}(|\xi|^{\alpha} \widetilde{\psi})\|_2 \lesssim & \||\xi|^{3\alpha-1-\theta}\psi \widehat{u}\|_2+\||\xi|^{2\alpha-1}\psi \widehat{u}\|_2\\
\lesssim & \big(\||\xi|^{3\alpha-1-\theta}\psi \|_2+\||\xi|^{2\alpha-1}\psi \|_2\big)\|\widehat{u}\|_{L^{\infty}}\\
\lesssim & \|\langle x\rangle u\|_2,
\end{aligned}    
\end{equation*}
which holds provided that $3\alpha-\frac{1}{2}-\theta>0$, and $2\alpha-\frac{1}{2}>0$. Since $2\alpha-1>0$, in this case, we apply Proposition \ref{prop3} instead of Proposition \ref{prop4} to get
\begin{equation*}\label{e1.43}
\begin{split}
   	\|\mathcal{D}^{\theta}(|\xi|^{2\alpha-1}\psi \widehat{u})\|_2 
	\lesssim &  \|\md^\theta (|\xi|^{2\alpha-1}\psi) \chi_{\{|\xi|\leq1\}}\hat{u}\|_2+\|\md^\theta (|\xi|^{2\alpha-1}\psi)\chi_{\{|\xi|\geq1\}}\hat{u}\|_2\\
	&+\||\xi|^{2\alpha-1}\psi\|_{L^\infty}\|\md^\theta \hat{u}\|_{2}\\
	\lesssim & \|(|\xi|^{2\alpha-1-\theta}+1)\chi_{\{|\xi|\leq1\}}\hat{u}\|_2 + \|\frac{\chi_{\{|\xi|\geq1\}}}{|\xi|^{1/2+\theta}}\hat{u}\|_2+\|\md^\theta \hat{u}\|_{2}\\
	\lesssim & \|(|\xi|^{2\alpha-1-\theta}+1)\chi_{\{|\xi|\leq1\}}\|_{L^\infty}\|\hat{u}\|_{2}+\|\frac{\chi_{\{|\xi|\geq1\}}}{|\xi|^{1/2+\theta}}\|_{2}\|\hat{u}\|_{L^\infty}\\
 &+\|\langle x\rangle^{\theta} u\|_2\\
	\lesssim & \|u\|_{s,2}+\|\langle x\rangle^2 u\|_2.
\end{split}
\end{equation*}
This finishes the estimate of $\mathcal{A}_3$. In the case of the  $\ma_4$ term, Cauchy-Schwarz inequality and Lemma \ref{commA} imply that
 \begin{equation*}\label{e1.45}
\begin{split}
|\ma_4|&\lesssim \|[\wnt;A](x^2u)\|_2\|x^2\wnt\,u\|_{2}\\
	&\lesssim \|x^2u\|_2\|x^2\wnt\,u\|_{2}\\
	&\lesssim \|\langle x\rangle^2u\|_2\|x^2\wnt\,u\|_{2}.\\
\end{split}
\end{equation*}
Since $\mathcal{B}$ involves the same operators as those studied in $\mathcal{A}$, by changing $u$ by $u^2$ in the argument of the operator $A$ in our previous analysis, we get 
\begin{equation*}
\begin{aligned}
 \mathcal{B}\lesssim & \big(\|\langle x\rangle^2 u^2\|_2+\|u^2\|_{s,2}\big)\|x^2\wnt\,u\|_2+\|J^{1-\alpha}(\langle x\rangle^2\wnt\,u^2)\|_2\|x^2\wnt\,u\|_2\\
 \lesssim & \big(\|u\|_{s,2}\|\langle x\rangle^2 u\|_2+\|u\|_{s,2}^2\big)\|x^2\wnt\,u\|_2\\
 &+\|u\|_{s,2}\big(\|\langle x\rangle^2 u\|_2+\|x^2\wnt\,u\|_2\big)\|x^2\wnt\,u\|_2,
\end{aligned}    
\end{equation*}
where we remark that substituting $u$ by $u^2$, $\mathcal{A}_5$ is transferred into $\mathcal{B}_5=\int A(\wnt x^2u^2)\,\wnt x^2u\,dx$, which this time it is not necessarily equal to zero but its estimate follows from the same arguments in the study of $\mathcal{B}_{2,2}$ in the proof of Proposition \ref{propTH1}, whose validity is given by interpolation \eqref{complex2}.

Hence, it follows from these previous computations that the energy estimate for these weights can be stated as   
\begin{equation*}\label{e1.42}
\begin{aligned}
\frac12 \frac{d}{dt}\int (x^2\wnt\,u)^2\,dx\lesssim & 
\Big(\|\langle x\rangle^2 u\|_2+\|u\|_{s,2}+\|u\|_{s,2}\|\langle x\rangle^2 u\|_2\\
&+\|u\|_{s,2}^2+\|u\|_{s,2}\|x^2\wnt\,u\|_{2}\Big)\|x^2\wnt\,u\|_{2}.    
\end{aligned}
\end{equation*}
From which by applying Gronwall's inequality and taking $N\to \infty$, the desired result follows.
\end{proof}

%%%%%%%%%%%%%%%%%%%%%%%%%%%%%%%%%%%%%%%%%%%%%%%%%%%%%%%%%%%%%%%%%%%%%%%%%%%%%%%%%%%%%%%%%%%%%%%%%%%%%%%%%%%%%%%%%%%%%%%%%%%%%%%%%%%%%%%%%%%%%%%%%%%%%%%%%%%%%%%%%%%%%%%%%%%%%%%%%%%%%%%%%%%%%%%%%%%%%%%%%%%%%%%%%%%%%%%%%%%%%%%%%%%%%%%%%%%%%%%%%%%%%%%%%%%%%%%%%%%%%%%%%%%%%%%%%%%%%%%%%%

\section{proof of Theorem \ref{Main3}}\label{SectUnique}

We begin with some estimates for the linear group $\{e^{tA}\}$ in weighted spaces. We remark that the following result is a consequence of the proof of Theorem \ref{Main2} above.

\begin{proposition}\label{Propwightedineq}
Let $\alpha \in (0,1)$ and $r\in (0,\frac{3}{2}+\alpha)$. If $\varphi \in L^2(\mathbb{R})\cap L^2(|x|^{2r}\, dx)=Z_{0,r}$, then for each $t\in \mathbb{R}$
\begin{equation}\label{groupineq}
 \begin{aligned}
 \|\langle x\rangle^{r} e^{t A}\varphi\|_2\lesssim  \langle t \rangle^{\lceil r \rceil}\big\|\langle x\rangle^r \varphi\|_2,  
 \end{aligned}   
\end{equation}
where $\lceil r \rceil$ stands for the ceiling function at $r$, which maps $r$ to the least integer greater than or equal to $r$.
    
\end{proposition}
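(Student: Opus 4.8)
The plan is to reduce everything to the Fourier side, where $\{e^{tA}\}$ is the multiplier with symbol $F(t,\xi)=e^{-ia(\xi)t}$ from Proposition \ref{derivatives}, and multiplication by $\langle x\rangle^r$ becomes the Bessel potential $J^r$ in the frequency variable. Thus by Plancherel I would start from the identity
\[
\|\langle x\rangle^{r}e^{tA}\varphi\|_2=\|J^r_{\xi}\big(F(t,\cdot)\widehat{\varphi}\big)\|_2,
\]
and then analyze the right-hand side through the Stein-derivative characterization of Theorem \ref{theorem9b} together with the pointwise Leibniz rules \eqref{pointwise2}, \eqref{pointwise2a}. The whole point is to transfer each $\xi$-derivative falling on $F$ into an explicit power of $t$ via \eqref{der1}--\eqref{der2}, while keeping the accompanying symbols under $L^2$ control.

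The core estimate is a uniform bound on the Stein derivative of the symbol. For $\beta\in(0,1)$ I would show $\|\md^{\beta}F(t,\cdot)\|_{L^\infty_\xi}\lesssim |t|^{\beta}$, which follows from
\[
|F(t,\xi)-F(t,\eta)|=|e^{-i(a(\xi)-a(\eta))t}-1|\le \min\{2,|t|\,|a(\xi)-a(\eta)|\}\le \min\{2,C|t|\,|\xi-\eta|\},
\]
the last step using that $a'(\xi)=\tfrac{1+(1-\alpha)|\xi|^\alpha}{(1+|\xi|^\alpha)^2}\in L^\infty(\R)$, and then splitting the defining integral \eqref{d1b} at $|\xi-\eta|\sim 1/|t|$ (both pieces yield $|t|^{2\beta}$ because $\beta<1$). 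With this in hand, the case $r\in(0,1)$ is immediate: by \eqref{pointwise2a}, Theorem \ref{theorem9b} and \eqref{pointwise2}, using $|F|\equiv 1$,
\[
\|J^r_\xi(F\widehat\varphi)\|_2\lesssim \|F\widehat\varphi\|_2+\|F\,\md^r\widehat\varphi\|_2+\|\widehat\varphi\,\md^r F\|_2\lesssim (1+|t|^{r})\|\langle x\rangle^r\varphi\|_2,
\]
which is the claim since $\lceil r\rceil=1$.

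For $r\ge 1$ I would write $r=m+\beta$ with $m=\lfloor r\rfloor\ge 1$, $\beta\in[0,1)$, and iterate Theorem \ref{theorem9b} to bound $\|J^r_\xi g\|_2$ by $\|\md^\beta\partial_\xi^m g\|_2$ plus lower-order integer-derivative terms, where $g=F\widehat\varphi$. Expanding $\partial_\xi^m(F\widehat\varphi)$ by the product rule and iterating Proposition \ref{derivatives} gives $\partial_\xi^k F=\big(\sum_{l=1}^{k}t^{l}b_{k,l}(\xi)\big)F$ with leading term $t^k(-ia'(\xi))^kF$; the symbols $b_{k,l}$ are precisely those appearing in the treatment of $\ma_1,\ma_2,\ma_3$ in the proof of Proposition \ref{propTH3}, namely bounded functions together with the origin singularities $|\xi|^{\alpha-1}$ and $|\xi|^{2\alpha-1}$ coming from \eqref{der2}. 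Since $r>1/2$ forces $\varphi\in L^1$ and hence $\widehat\varphi\in L^\infty$, these singular symbols are controlled in $L^2$ exactly under the restriction $r<\tfrac32+\alpha$ (and the dichotomy $\alpha\gtrless\tfrac12$) by Propositions \ref{prop3}--\ref{prop5}, \ref{negativeleibnizprop} and Claim \ref{ClaimA1}. Applying $\md^\beta$ and invoking the uniform bound on $\md^\beta F$ from the previous step, the highest resulting power of $t$ is $|t|^{m+\beta}=|t|^{r}\le \langle t\rangle^{\lceil r\rceil}$, while all lower-order terms carry strictly smaller powers; collecting everything yields the asserted inequality.

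The main obstacle is the symbol analysis for $r\ge 1$: the higher $\xi$-derivatives of $F$ produce the non-smooth multipliers $|\xi|^{\alpha-1}$ and $|\xi|^{2\alpha-1}$ near $\xi=0$, and keeping them in $L^2$ after multiplication by $\widehat\varphi$ and fractional differentiation is exactly what pins the threshold at $r<\tfrac32+\alpha$ and separates the cases $\alpha<\tfrac12$ and $\alpha\ge\tfrac12$. This is the sense in which the proposition ``follows from the proof of Theorem \ref{Main2}'': the frequency-side estimates for $\ma_2,\ma_3,\mb$ there can be re-run essentially verbatim with the $a'$-symbols, the only new ingredient being the bookkeeping of the powers of $t$ supplied by \eqref{der1}--\eqref{der2} and by the bound $\|\md^\beta F\|_{L^\infty_\xi}\lesssim|t|^\beta$.
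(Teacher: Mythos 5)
Your argument is correct in substance, but it takes a genuinely different route from the paper. The paper does \emph{not} estimate the multiplier directly: it observes that $u(t)=e^{tA}\varphi$ solves $\partial_t u-Au=0$, multiplies this equation by $(x^j\wnt)^2u$ (with $r=j+\theta$), and recycles verbatim the bounds already proved for the linear term $\ma$ in Propositions \ref{propTH1}--\ref{propTH3}; the powers of $t$ then come from iterating Gronwall's inequality once per integer step in $j$, which is exactly why the exponent appears as $\lceil r\rceil$, and the $N\to\infty$ limit in the truncated weights $\wnt$ handles the a priori regularity. Your proof instead works at fixed $t$ on the frequency side: Plancherel, the Stein--derivative Leibniz rule \eqref{pointwise2}, the explicit formulae \eqref{der1}--\eqref{der2}, and the uniform bound $\|\md^{\beta}F(t,\cdot)\|_{L^\infty_\xi}\lesssim|t|^{\beta}$ (whose splitting of \eqref{d1b} at $|\xi-\eta|\sim 1/|t|$ is correct for $\beta\in(0,1)$). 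The two approaches share the same underlying symbol analysis — the singularities $|\xi|^{\alpha-1}$, $|\xi|^{2\alpha-1}$ at the origin and Propositions \ref{prop3}--\ref{negativeleibnizprop} and Claim \ref{ClaimA1} are what pin $r<\tfrac32+\alpha$ in both — but yours is self-contained and yields the slightly sharper power $\langle t\rangle^{r}$, at the cost of redoing the Leibniz bookkeeping in $\xi$ (including the fact that $a'$ is only $C^{0,\alpha}$ at the origin, so the cross terms $\partial_\xi^kF\,\partial_\xi^{m-k}\widehat\varphi$ genuinely need Claim \ref{ClaimA1}-type estimates rather than a naive product rule); the paper's route avoids all of that by exploiting skew-adjointness of $A$ (so the top-order term vanishes) and the work already done in Section \ref{SectionLWP}. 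The one point you should make explicit is the reduction $\|J^{m+\beta}_\xi g\|_2\sim\sum_{k\le m}\|\partial_\xi^kg\|_2+\|\md^\beta\partial_\xi^mg\|_2$, which on $L^2$ is immediate from Plancherel and \eqref{d1-norm}; with that stated, the argument closes.
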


\begin{proof}
We may assume that $\varphi$ is sufficiently regular to perform all the arguments shown below. The general case follows from approximation to our estimates. We write $r=j+\theta$, where $j\geq 0$ is an  integer and  $\theta\in [0,1]$. Since $u(t)=e^{tA}\varphi$ is a solution of the equation $\partial_t u-Au=0$, we multiply such equation by $(x^{j}\wnt)^2 u$ to get
 \begin{equation}\label{diffeq2}
 \frac{1}{2}\frac{d}{dt}\int (x^j\wnt u)^2\, dx-\int A u (x^{j}\wnt)^2 u=0.    
 \end{equation}   
When $j=0$, $\theta\in (0,1]$, the estimate for the factor $\mathcal{A}$ in Proposition \ref{propTH1} and \eqref{diffeq2} show
\begin{equation*}
  \frac{d}{dt}\int (\wnt u)^2\, dx\lesssim \|u\|_2\|\wnt u\|_2\\
  \lesssim \|\varphi\|_2\|\wnt u\|_2,
\end{equation*}
where we have used that $\{e^{tA}\}$ is a group of isometries in $L^2$.  Hence, Gronwall's inequality and taking $N\to \infty$ establish
\begin{equation*}
    \|\langle x\rangle^{\theta} u(t)\|_2\lesssim \|\langle x\rangle^{\theta} \varphi\|_2+|t|\|\varphi\|_2, 
\end{equation*}
for all $\theta\in (0,1]$. This yields \eqref{groupineq} for the case $r=\theta\in(0,1]$. By increasing $j\in \mathbb{Z}$, $j\geq 0$, and keeping the condition  $j+\theta<\frac{3}{2}+\alpha$, we can iterate the previous argument to get the desired result. More precisely, when $j=1$ and $\theta\in (0,1]$, plugging the estimate made for $\mathcal{A}$ in Proposition \ref{propTH2} into \eqref{diffeq2}, gives us a differential inequality involving $\|x\wnt u\|_2$ for which $N\to\infty$ yields \eqref{groupineq}. Finally, arguing as before, when $j=2$ (which only happens when $\alpha\in (\frac{1}{2},1)$), and $\theta\in (0,\alpha-\frac{1}{2})$, we use the estimate $\mathcal{A}$ in Proposition \ref{propTH3} to deduce \eqref{groupineq}.
\end{proof}

\begin{proof}[Proof of Theorem \ref{Main3}]
 We restrict our proof  of Theorem \ref{Main3} to  the lower dispersion case $\alpha\in (0,\frac{1}{2})$. When $\alpha\in [\frac{1}{2},1)$,  similar estimates in the former case and identity \eqref{der2} yield the desired conclusion. Moreover, without loss of generality, we will assume that $t_1=0$, i.e., we consider $u\in C([0,T];Z_{s,r})$ to be a solution of \eqref{fBBM} with initial condition $\varphi$, where $s>s_{\alpha}$ and $r=\big(\frac{3}{2}+\alpha\big)^{-}$ (our results below make clear the admissible values for $r$), such that 
\begin{equation*}
 u(t_2), \varphi \in L^2(|x|^{2\big(\frac{3}{2}+\alpha\big)}\, dx),  
\end{equation*}
for some $0<t_2\leq T$. Applying the Fourier transform, the previous statement is equivalent to have
\begin{equation*}
 \widehat{u}(\xi,t_2), \widehat{\varphi}(\xi) \in H^{\frac{3}{2}+\alpha}(\mathbb{R}),  
\end{equation*}
which is also equivalent to
\begin{equation}\label{assumpt1}
\partial_{\xi}\widehat{u}(\xi,t_2), \partial_{\xi}\widehat{\varphi}(\xi) \in H^{\frac{1}{2}+\alpha}(\mathbb{R}).  
\end{equation}
Now, the fact that $u\in C([0,T];Z_{s,r})$ with $r>1$ solves the integral equation \eqref{IE} allow us to compute $\partial_{\xi} \widehat{u}(t_2)$ to write
\begin{equation*}
\begin{aligned}
\partial_{\xi} \widehat{u}(t_2)=&-it_2\frac{1+(1-\alpha)|\xi|^{\alpha}}{(1+|\xi|^{\alpha})^2}F(t_2,\xi)\widehat{\varphi}(\xi)+ F(t_2,\xi)\partial_{\xi}\widehat{\varphi}(\xi) \\
&-i\int_0^{t_2} (t_2-\tau)\frac{1+(1-\alpha)|\xi|^{\alpha}}{(1+|\xi|^{\alpha})^2}F(t_2-\tau,\xi)\widehat{A u^2}(\xi,\tau)\, d\tau\\
&+\int_0^{t_2} F(t_2-\tau,\xi)\partial_{\xi}\widehat{A u^2}(\xi,\tau)\, d\tau \\
=:&\mathcal{U}_1+\mathcal{U}_2+\mathcal{U}_3+\mathcal{U}_4,
\end{aligned}    
\end{equation*}
where we also used identity \eqref{der1}. By Plancherel's identity and Proposition \ref{Propwightedineq}, we have
\begin{equation*}
\begin{aligned}
\|J^{\frac{1}{2}+\alpha}_{\xi}(\mathcal{U}_2)\|_2=\|\langle x \rangle^{\frac{1}{2}+\alpha}e^{t_2A}(x\varphi)\|_2\lesssim  \langle t_2 \rangle^{\lceil \frac{1}{2}+\alpha \rceil}\|\langle x\rangle^{\frac{3}{2}+\alpha}\varphi\|_2.  
\end{aligned}    
\end{equation*}
Next, using that the symbol of the operator $A$ is $-i\xi(1+|\xi|^{\alpha})^{-1}$, we write
\begin{equation*}
\begin{aligned}
\mathcal{U}_3=-\int_0^{t_2} (t_2-\tau)\frac{\xi+(1-\alpha)|\xi|^{\alpha}\xi}{(1+|\xi|^{\alpha})^3\langle \xi \rangle^{1-\alpha}}F(t_2-\tau,\xi)\widehat{J^{1-\alpha}(u^2)}(\xi,\tau)\, d\tau.   
\end{aligned}    
\end{equation*}
Then, applying Theorem \ref{theorem9b}, property \eqref{pointwise2}, and \eqref{Linftybound}, we get
\begin{equation}\label{eqU3}
\begin{aligned}
\|J^{\frac{1}{2}+\alpha}_{\xi} &(\mathcal{U}_3)\|_2\\
\lesssim &  \langle t_2 \rangle \Big(\|\frac{\xi+(1-\alpha)|\xi|^{\alpha}\xi}{(1+|\xi|^{\alpha})^3\langle \xi \rangle^{1-\alpha}}\|_{L^{\infty}}+\|\partial_{\xi}\Big(\frac{\xi+(1-\alpha)|\xi|^{\alpha}\xi}{(1+|\xi|^{\alpha})^3\langle \xi \rangle^{1-\alpha}}\Big)\|_{L^{\infty}}\Big)\\
&\qquad \times \int_0^{t_2} \|J_{\xi}^{\frac{1}{2}+\alpha}\big(F(t_2-\tau,\xi)\widehat{J^{1-\alpha}(u^2)}(\xi,\tau)\big)\|_2\, d\tau\\
\lesssim & \langle t_2 \rangle \int_0^{t_2} \|\langle x\rangle^{\frac{1}{2}+\alpha}e^{(t_2-\tau)A}J^{(1-\alpha)}(u^2)(\tau)\|_2\, d\tau,
\end{aligned}    
\end{equation}
where we also applied Plancherel's identity. Consequently, Proposition \ref{Propwightedineq} and interpolation (see Remark \ref{remarkinterp}) show
\begin{equation*}
\begin{aligned}
\|J^{\frac{1}{2}+\alpha}(\mathcal{U}_3)\|_2\lesssim & \langle t_2 \rangle^{\lceil \frac{1}{2}+\alpha \rceil+1}\int_0^{t_2}\|\langle x\rangle^{\frac{1}{2}+\alpha}J^{1-\alpha}(u^2)(\tau)\|_2\, d\tau\\
\lesssim & \langle t_2 \rangle^{\lceil \frac{1}{2}+\alpha \rceil+1}\int_0^{t_2}\|\langle x\rangle^{1+2\alpha}u^2(\tau)\|_2^{\frac{1}{2}}\|J^{2(1-\alpha)}(u^2)(\tau)\|_2^{\frac{1}{2}}\, d\tau. 
\end{aligned}    
\end{equation*}
Our local theory in $H^{s}(\mathbb{R})$, $s>s_{\alpha}=2(1-\alpha)$ for the IVP \eqref{fBBM}, and Lemma \ref{leibnizhomog} imply $\sup_{t\in [0,T]}\|J^{2(1-\alpha)}(u^2)(\tau)\|_2 <\infty$.  Moreover, since $1+2\alpha\leq \frac{3}{2}+\alpha$, we use Sobolev embedding, interpolation, and the fact that $u\in C([0,T];H^s(\mathbb{R}))$ with $s>s_{\alpha}$ to get
\begin{equation}\label{eqinterpnon1}
\begin{aligned}
 \|\langle x\rangle^{1+2\alpha}u^2\|_2\leq \|\langle x\rangle^{\frac{3}{2}+\alpha}u^2\|_2=&\|\langle x\rangle^{\frac{3+2\alpha}{4}}u\|_{L^4}^2\\
 \lesssim & \|J^{\frac{1}{4}}(\langle x\rangle^{\frac{3+2\alpha}{4}}u)\|_{2}^2 \\
 \lesssim & \|\langle x \rangle^{\frac{(3+2\alpha)s}{4s-1}} u\|_2^{\frac{4s-1}{2s}}\|u\|_{s,2}^{\frac{1}{2s}},
\end{aligned}    
\end{equation}
which is controlled provided that $u\in C([0,T];Z_{s,r})$ with $\max\{1,\frac{(3+2\alpha)s}{4s-1}\}< r<\frac{3}{2}+\alpha$. Hence, we have proved that  $\mathcal{U}_3\in H^{\frac{1}{2}+\alpha}(\mathbb{R})$. Summarizing, 
\begin{equation*}
\partial_{\xi}\widehat{u}(t_2)\in H^{\frac{1}{2}+\alpha}(\mathbb{R}) \qquad \text{ if and only if } \qquad \mathcal{U}_1+\mathcal{U}_4\in  H^{\frac{1}{2}+\alpha}(\mathbb{R}).   
\end{equation*}
Next, we further decompose the terms $\mathcal{U}_1$ and $\mathcal{U}_4$. To do so, let $\psi$ be a smooth function compactly supported such that $\psi(\xi)=1$, whenever $|\xi|\leq 1$. We write
\begin{equation*}
\begin{aligned}
\mathcal{U}_1=& -it_2\frac{1+(1-\alpha)|\xi|^{\alpha}}{(1+|\xi|^{\alpha})^2}(F(t_2,\xi)-1)\widehat{\varphi}(\xi)\\
&-it_2\frac{1+(1-\alpha)|\xi|^{\alpha}}{(1+|\xi|^{\alpha})^2}(1-\psi)\widehat{\varphi}(\xi)\\
&-it_2\frac{1+(1-\alpha)|\xi|^{\alpha}}{(1+|\xi|^{\alpha})^2}(\widehat{\varphi}(\xi)-\widehat{\varphi}(0))\psi\\
&-it_2\frac{1+(1-\alpha)|\xi|^{\alpha}}{(1+|\xi|^{\alpha})^2}\widehat{\varphi}(0)\psi\\
=:&\mathcal{U}_{1,1}+\mathcal{U}_{1,2}+\mathcal{U}_{1,3}+\mathcal{U}_{1,4}.
\end{aligned}    
\end{equation*}
We divide the estimate for $\mathcal{U}_4$ as follows
\begin{equation*}
\begin{aligned}
  \mathcal{U}_4=&-i \int_0^{t_2} F(t_2-\tau,\xi)\frac{\xi}{1+|\xi|^{\alpha}}\partial_{\xi}\widehat{u^2}(\xi,\tau)\, d\tau\\
 &-i\int_0^{t_2} \frac{1+(1-\alpha)|\xi|^{\alpha}}{(1+|\xi|^{\alpha})^2}F(t_2-\tau,\xi)\widehat{u^2}(\xi,\tau)\, d\tau\\
 =&\mathcal{U}_{4,1}+\mathcal{U}_{4,2}.
\end{aligned}
\end{equation*}
and
\begin{equation*}
\begin{aligned}
\mathcal{U}_{4,2}=& -i\frac{1+(1-\alpha)|\xi|^{\alpha}}{(1+|\xi|^{\alpha})^2}\int_0^{t_2}(F(t_2-\tau,\xi)-1)\widehat{u^2}(\xi,\tau)\, d\tau\\
&-i\frac{1+(1-\alpha)|\xi|^{\alpha}}{(1+|\xi|^{\alpha})^2}\int_0^{t_2}(1-\psi)\widehat{u^2}(\xi,\tau )\, d\tau\\
&-i\frac{1+(1-\alpha)|\xi|^{\alpha}}{(1+|\xi|^{\alpha})^2}\int_0^{t_2}(\widehat{u^2}(\xi,\tau)-\widehat{u^2}(0,\tau))\psi\, d\tau\\
&-i\frac{1+(1-\alpha)|\xi|^{\alpha}}{(1+|\xi|^{\alpha})^2}\int_0^{t_2}\widehat{u^2}(0,\tau)\psi\, d\tau\\
=:&\mathcal{U}_{4,2,1}+\mathcal{U}_{4,2,2}+\mathcal{U}_{4,2,3}+\mathcal{U}_{4,2,4}.
\end{aligned}    
\end{equation*}
We have the following result
\begin{proposition}\label{propuniquecon} Let $\alpha\in (0,\frac{1}{2})$, $s>s_{\alpha}$, $\max\{1,\frac{(3+2\alpha)s}{4s-1},\frac{s(3+2\alpha)}{4(s+\alpha-1)}\}<r<\frac{3}{2}+\alpha$,  $\varphi\in Z_{s,\frac{3}{2}+\alpha}$, and $u\in C([0,T];Z_{s,r})$. Then
\begin{equation*}
  \mathcal{U}_{4,1}, \mathcal{U}_{1,m}, \mathcal{U}_{4,2,m}\in H^{\frac{1}{2}+\alpha}(\mathbb{R}),  
\end{equation*}
for each $m=1,2,3$.  
\end{proposition}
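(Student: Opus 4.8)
The plan is to establish each membership by reducing the $H^{\frac{1}{2}+\alpha}(\mathbb{R})$-norm in the frequency variable to a controllable quantity, using throughout that for $\alpha\in(0,\frac12)$ the order $\theta:=\frac12+\alpha$ lies in $(\frac12,1)$, so the Stein-derivative calculus of Section \ref{SectPrelim} is available, and that Theorem \ref{theorem9b} lets me replace $\|J^{\theta}_{\xi}\,\cdot\,\|_2$ by $\|\cdot\|_2+\|\mathcal{D}^{\theta}_{\xi}\,\cdot\,\|_2$. I treat the genuinely nonlinear term $\mathcal{U}_{4,1}$ apart from the rest. Since $F(t_2-\tau,\xi)$ is the symbol of $e^{(t_2-\tau)A}$, since $\xi(1+|\xi|^\alpha)^{-1}$ is the symbol of $iA$, and since $\partial_\xi\widehat{u^2}=-i\widehat{xu^2}$, I recognize $\mathcal{U}_{4,1}$ as the Fourier transform of $V:=-i\int_0^{t_2}e^{(t_2-\tau)A}A(xu^2)(\tau)\,d\tau$. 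Plancherel then gives $\|J^{\theta}_\xi \mathcal{U}_{4,1}\|_2=\|\langle x\rangle^{\theta}V\|_2$, and Minkowski together with Proposition \ref{Propwightedineq} (applicable since $\theta<\frac32+\alpha$ and $\lceil\theta\rceil=1$) bound this by $\langle t_2\rangle\int_0^{t_2}\|\langle x\rangle^{\theta}A(xu^2)(\tau)\|_2\,d\tau$. I split $\langle x\rangle^{\theta}A(xu^2)=[\langle x\rangle^{\theta},A](xu^2)+A(\langle x\rangle^{\theta}xu^2)$, bound the commutator by Lemma \ref{commA} (note $\partial_x\langle x\rangle^{\theta}\in L^\infty$ as $\theta<1$), and control the second piece by $\|J^{1-\alpha}(\langle x\rangle^{\frac32+\alpha}u^2)\|_2$, which I close by the weighted interpolation Lemma \ref{lemma1}/\eqref{complex2} exactly as in \eqref{eqinterpnon1}; this is the step that consumes the lower bounds $r>\frac{(3+2\alpha)s}{4s-1}$ and $r>\frac{s(3+2\alpha)}{4(s+\alpha-1)}$.

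For the remaining six terms I argue directly in frequency. Each is the product of the symbol $m(\xi):=\frac{1+(1-\alpha)|\xi|^\alpha}{(1+|\xi|^\alpha)^2}$ with a factor that either vanishes at the origin or is supported away from it, and I distribute $\mathcal{D}^{\theta}_\xi$ with the Leibniz-type inequality \eqref{pointwise2}. The only dangerous contribution is $\mathcal{D}^{\theta}_\xi(m\psi)$: by Propositions \ref{prop5} and \ref{prop3} its low-frequency size is that of $\mathcal{D}^{\theta}_\xi(|\xi|^\alpha\psi)(\eta)\sim|\eta|^{\alpha-\theta}=|\eta|^{-1/2}$, which is the endpoint where square-integrability just fails. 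The three structural factors remove exactly this failure. In $\mathcal{U}_{1,2}$ and $\mathcal{U}_{4,2,2}$ the cut-off $1-\psi$ deletes the origin, leaving $m(1-\psi)$ smooth, bounded, and decaying, so \eqref{Linftybound} (as in \eqref{e1.23.1}) applies and the data factors $\widehat\varphi,\widehat{u^2}(\cdot,\tau)$ enter only through $\|\cdot\|_2$ and $\|\mathcal{D}^{\theta}\cdot\|_2$. In $\mathcal{U}_{1,3}$ and $\mathcal{U}_{4,2,3}$ the differences $\widehat\varphi(\xi)-\widehat\varphi(0)$ and $\widehat{u^2}(\xi,\tau)-\widehat{u^2}(0,\tau)$ vanish at the origin with a positive Hölder rate $\gamma$ (since $\widehat\varphi\in H^{\frac32+\alpha}$ and $\widehat{u^2}(\cdot,\tau)\in H^{r}$, $r>1$), so the product $(\widehat\varphi(\eta)-\widehat\varphi(0))\,\mathcal{D}^{\theta}_\xi(m\psi)(\eta)$ behaves near the origin like $|\eta|^{\gamma-1/2}$ with $\gamma>0$, hence lies in $L^2$. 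In $\mathcal{U}_{1,1}$ and $\mathcal{U}_{4,2,1}$ I first localize by $\psi$: on the high-frequency part $m(1-\psi)$ is bounded and I proceed as for $\mathcal{U}_{1,2}$, while on the low-frequency part I factor $F(t,\xi)-1=a(\xi)\cdot\frac{F(t,\xi)-1}{a(\xi)}$, where the quotient is bounded with bounded $\xi$-derivative uniformly in the time variable (admissible for \eqref{Linftybound}) and $a(\xi)=\xi(1+|\xi|^\alpha)^{-1}$ upgrades the singular $|\xi|^\alpha$ to $\mathrm{sgn}(\xi)|\xi|^{1+\alpha}$, whose $\theta$-Stein derivative is $\sim|\eta|^{1/2}\in L^2_{\mathrm{loc}}$ by Proposition \ref{prop3}. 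The data-side factors on the $\widehat{u^2}$ pieces are handled uniformly in $\tau$ using the local theory in $H^{s}$ with $s>s_\alpha=2(1-\alpha)$ and Lemma \ref{leibnizhomog} (giving $\sup_\tau\|\widehat{u^2}(\tau)\|_{H^{\theta}}<\infty$), and the $L^\infty$-norms of $\widehat\varphi,\widehat{u^2}$ are dominated by $\|\langle x\rangle\,\cdot\,\|_2$ through Sobolev embedding.

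I expect the main obstacle to be the borderline nature of the exponent $\theta=\frac12+\alpha$: it is precisely the endpoint of Proposition \ref{prop3} at which $\mathcal{D}^{\theta}(|\xi|^\alpha\psi)\notin L^2$, so none of the estimates has any room to spare, and the whole argument hinges on verifying that each structural factor ($1-\psi$, the difference, or $F-1$) genuinely produces the extra positive power of $|\eta|$ needed to cross below the threshold, and that this happens uniformly in $\tau\in[0,t_2]$ for the time-integral terms. The secondary difficulty is bookkeeping the nonlinear interpolation: one must check that the hypotheses $s>2(1-\alpha)$ and $\max\{1,\frac{(3+2\alpha)s}{4s-1},\frac{s(3+2\alpha)}{4(s+\alpha-1)}\}<r<\frac32+\alpha$ are exactly what make Lemma \ref{lemma1} applicable to $\langle x\rangle^{\frac32+\alpha}u^2$ when closing $\mathcal{U}_{4,1}$ (and the $\widehat{u^2}$-factors of $\mathcal{U}_{4,2,1},\mathcal{U}_{4,2,3}$), so that all constants remain finite on $[0,T]$.
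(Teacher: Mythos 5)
Your proposal is correct and follows essentially the same route as the paper: Plancherel plus the weighted group estimate of Proposition \ref{Propwightedineq} and an interpolation as in \eqref{eqinterpnon1} for the Duhamel terms, and for the remaining pieces the frequency-side Stein-derivative calculus with exactly the paper's three mechanisms (the cut-off $1-\psi$ removing the origin, the extraction of a factor $\xi$ from $\widehat\varphi(\xi)-\widehat\varphi(0)$, and the factorization of $F-1$ through $a(\xi)$ to upgrade $|\xi|^{\alpha}$ to $|\xi|^{1+\alpha}$). The only executional differences are that for $\mathcal{U}_{4,1}$ you commute the weight past $A$ via Lemma \ref{commA} where the paper instead bounds $A$ by $J^{1-\alpha}$ first and invokes the $L^2$-boundedness of $[\langle x\rangle^{\frac12+\alpha},J^{1-\alpha}]$ and $[J^{1-\alpha},\tfrac{x}{\langle x\rangle}]$, and that you phrase the vanishing of the differences at the origin through a H\"older rate rather than the explicit representation $\widehat\varphi(\xi)-\widehat\varphi(0)=\xi\int_0^1\partial_\xi\widehat\varphi(\sigma\xi)\,d\sigma$; both variants lead to the same estimates.
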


We deduce the previous result later. For now, assuming the validity of Proposition \ref{propuniquecon}, we have that $\partial_{\xi}\widehat{u}(t_2)\in H^{\frac{1}{2}+\alpha}(\mathbb{R})$ if and only if $\mathcal{U}_{1,4}+\mathcal{U}_{4,2,4}\in  H^{\frac{1}{2}+\alpha}(\mathbb{R})$, which is equivalent to
\begin{equation}\label{conclueq}
\begin{aligned}
-i\frac{1+(1-\alpha)|\xi|^{\alpha}}{(1+|\xi|^{\alpha})^2}\psi\Big(t_2\widehat{\varphi}(0)+\int_0^{t_2}\widehat{u^2}(0,\tau)\, d\tau \Big)\in H^{\alpha+\frac{1}{2}}(\mathbb{R}).   
\end{aligned}    
\end{equation}
We need the following result.
\begin{proposition}\label{propuniquecon2} Let $\alpha\in (0,\frac{1}{2})$, then 
\begin{equation*}
\frac{1+(1-\alpha)|\xi|^{\alpha}}{(1+|\xi|^{\alpha})^2}\psi\notin H^{\frac{1}{2}+\alpha}(\mathbb{R}).
\end{equation*}  
\end{proposition}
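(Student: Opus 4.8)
The plan is to isolate the leading singular behaviour of the symbol at $\xi=0$ and to reduce the non-membership to the sharp borderline already recorded in Proposition \ref{prop3}. First I would set $h(y)=\frac{1+(1-\alpha)y}{(1+y)^2}$, so that the function under study equals $\psi(\xi)\,h(|\xi|^\alpha)$. Since $h$ is smooth (indeed real-analytic) near $y=0$ with $h(0)=1$ and $h'(0)=-(1+\alpha)\neq 0$, a second order Taylor expansion yields $h(y)=1-(1+\alpha)y+y^2R(y)$ with $R$ smooth near the origin. Substituting $y=|\xi|^\alpha$ gives the decomposition
\begin{equation*}
\frac{1+(1-\alpha)|\xi|^\alpha}{(1+|\xi|^\alpha)^2}\,\psi(\xi)
=\psi(\xi)-(1+\alpha)\,|\xi|^\alpha\psi(\xi)+|\xi|^{2\alpha}R(|\xi|^\alpha)\,\psi(\xi).
\end{equation*}
The first summand is smooth and compactly supported, hence lies in $H^{\frac12+\alpha}(\mathbb{R})$. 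The whole argument hinges on the fact that the coefficient $-(1+\alpha)$ of the critical power $|\xi|^\alpha$ does not vanish, so the middle term is genuinely present.

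Next I would show that the remainder $|\xi|^{2\alpha}R(|\xi|^\alpha)\psi(\xi)$ belongs to $H^{\frac12+\alpha}(\mathbb{R})$; this is where the gain of regularity furnished by the doubled power $|\xi|^{2\alpha}$ is exploited. Note $\tfrac12+\alpha\in(\tfrac12,1)$ since $\alpha\in(0,\tfrac12)$, so Theorem \ref{theorem9b} applies and membership in $H^{\frac12+\alpha}$ is equivalent to $L^2$-integrability of the Stein derivative $\mathcal{D}^{\frac12+\alpha}$. Applying Proposition \ref{prop3} with exponent $2\alpha$ in place of $\alpha$, the model term $|\xi|^{2\alpha}\psi$ produces a Stein derivative behaving like $|\eta|^{2\alpha-(\frac12+\alpha)}=|\eta|^{\alpha-\frac12}$ near $\eta=0$ and like $|\eta|^{-\frac12-(\frac12+\alpha)}$ at infinity; the former is square integrable precisely because $2(\alpha-\tfrac12)>-1$, i.e.\ because $\alpha>0$. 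The smooth factor $R(|\xi|^\alpha)$ does not destroy this: writing $R(|\xi|^\alpha)=R(0)+|\xi|^\alpha\widetilde R(|\xi|^\alpha)$ with $\widetilde R$ smooth (and iterating if needed), or alternatively invoking the product-type bound \eqref{pointwise2} together with Propositions \ref{prop3} and \ref{prop5}, one controls $\mathcal{D}^{\frac12+\alpha}$ of the remainder in $L^2$ by contributions of the same or milder singularity, all integrable for $\alpha>0$.

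Finally I would conclude by contradiction. If the symbol belonged to $H^{\frac12+\alpha}(\mathbb{R})$, then subtracting the two summands already shown to lie in $H^{\frac12+\alpha}$ would force $(1+\alpha)\,|\xi|^\alpha\psi(\xi)\in H^{\frac12+\alpha}(\mathbb{R})$, equivalently $\mathcal{D}^{\frac12+\alpha}(|\xi|^\alpha\psi)\in L^2(\mathbb{R})$ by Theorem \ref{theorem9b}. But Proposition \ref{prop3} states that $\mathcal{D}^{\theta}(|\xi|^\alpha\psi)\in L^2$ \emph{if and only if} $\theta<\alpha+\tfrac12$, so the critical endpoint $\theta=\alpha+\tfrac12$ fails. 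This contradiction establishes the proposition. The only genuinely delicate point is the second step, controlling the remainder $|\xi|^{2\alpha}R(|\xi|^\alpha)\psi$: one must verify that composing with the smooth function $R$ and carrying the weight $|\xi|^{2\alpha}$ yields a Stein derivative whose origin singularity remains square integrable, which is exactly what the strict inequality $\alpha>0$ guarantees.
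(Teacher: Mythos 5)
Your proposal is correct, but it takes a genuinely different route from the paper. The paper works directly with the full symbol: writing $\mathcal{W}(x)=\frac{1+(1-\alpha)x}{(1+x)^2}$, it restricts to $\xi\in(0,\tfrac12)$ and $\eta$ with $0<\eta\ll\xi$, applies the mean value theorem to get $|\mathcal{W}(|\xi|^{\alpha})-\mathcal{W}(|\eta|^{\alpha})|\gtrsim||\xi|^{\alpha}-|\eta|^{\alpha}|\sim|\xi|^{\alpha}$, and thereby lower-bounds $\bigl(\mathcal{D}^{\frac12+\alpha}(\mathcal{W}(|\xi|^{\alpha})\psi)\bigr)^2\gtrsim|\xi|^{-1}\chi_{(0,1/2)}$, which is not integrable. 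You instead Taylor-expand $h(y)=1-(1+\alpha)y+y^2R(y)$ and reduce the obstruction to the sharp endpoint failure $\mathcal{D}^{\frac12+\alpha}(|\xi|^{\alpha}\psi)\notin L^2$ already recorded in the ``if and only if'' of Proposition \ref{prop3} (whose proof, note, rests on essentially the same two-sided asymptotics the paper recomputes here). Your route is conceptually cleaner in that it isolates $-(1+\alpha)|\xi|^{\alpha}\psi$ as \emph{the} non-smooth obstruction, but it requires the extra positive step of showing $|\xi|^{2\alpha}R(|\xi|^{\alpha})\psi\in H^{\frac12+\alpha}$; this does go through --- either by iterating $R(y)=R(0)+y\widetilde R(y)$ until the remaining power of $|\xi|$ is at least $1$ (so the tail is compactly supported Lipschitz, hence in $H^1\subset H^{\frac12+\alpha}$), with each intermediate term $|\xi|^{j\alpha}\psi$, $j\ge2$, handled by Proposition \ref{prop3} since $\tfrac12+\alpha<\tfrac12+j\alpha$, or by the weighted product bound you sketch, where the extra factor $|\xi|^{2\alpha}$ converts the borderline $|\eta|^{-1/2}$ singularity into the integrable $|\eta|^{2\alpha-1/2}$ --- but you should spell out one of these two mechanisms rather than leave it at ``same or milder singularity.'' The paper's direct lower bound avoids this remainder analysis entirely, at the cost of redoing the endpoint computation by hand.
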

Once again, to give continuity to our argument, we will show the previous result at the end of this section. Since \eqref{conclueq} holds, Proposition \ref{propuniquecon2} forces us to have
\begin{equation*}
t_2\widehat{\varphi}(0)+\int_0^{t_2}\widehat{u^2}(0,\tau)\, d\tau=0,
\end{equation*}
which because $\tau \mapsto \|u(\tau)\|_2^2$ is a continuous function, allows us to complete the proof of Theorem \ref{Main3}.
\end{proof}

\begin{proof}[Proof of Proposition \ref{propuniquecon}]
Let us show first that $\mathcal{U}_{1,m}\in H^{\frac{1}{2}+\alpha}(\mathbb{R})$ for each $m=1,2,3$.  We write $\mathcal{U}_{1,1}=\mathcal{U}_{1,1}\psi+\mathcal{U}_{1,1}(1-\psi)$. Since $\frac{1+(1-\alpha)|\xi|^{\alpha}}{(1+|\xi|^{\alpha})^2}(F(t_2,\xi)-1)(1-\psi)$ defines a smooth bounded function with all its derivatives bounded, we can use property \eqref{Linftybound} to argue as in \eqref{e1.23.1} to control $\mathcal{U}_{1,1}(1-\psi)$. The same idea applies to estimate $\mathcal{U}_{1,2}$. Summarizing, we get
\begin{equation*}
 \|\mathcal{D}^{\frac{1}{2}+\alpha}\big(\mathcal{U}_{1,1}(1-\psi)\big)\|_2 + \|\mathcal{D}^{\frac{1}{2}+\alpha}( \mathcal{U}_{1,2})\|_2 \lesssim \|\langle x \rangle^{\frac{1}{2}+\alpha}\varphi\|_2,   
\end{equation*}
where the implicit constant depends on $t_2$. Next, we apply Proposition \ref{negativeleibnizprop} to deduce
\begin{equation*}
\begin{aligned}
\|\mathcal{D}^{\frac{1}{2}+\alpha}\big(\mathcal{U}_{1,1}\psi\big)\|_2 \lesssim & \|(1+(1-\alpha)|\xi|^{\alpha}) (F(t_2,\xi)-1)\psi \widehat{\varphi}\|_2\\
&+\|(1+(1-\alpha)|\xi|^{\alpha}) (F(t_2,\xi)-1)\psi \widehat{\varphi}\mathcal{D}^{\frac{1}{2}+\alpha}(|\xi|^{\alpha}\widetilde{\psi})\|_2 \\
&+\|\mathcal{D}^{\frac{1}{2}+\alpha}\big((1+(1-\alpha)|\xi|^{\alpha}) (F(t_2,\xi)-1)\psi \widehat{\varphi}\big)\|_2. 
\end{aligned}    
\end{equation*}
Let us estimate each of the factors on the right-hand side of the inequality above. We first observe
\begin{equation*}
\begin{aligned}
\|(1+(1-\alpha)|\xi|^{\alpha}) (F(t_2,\xi)-1)\psi \widehat{\varphi}\|_2 \lesssim & \|\langle \xi\rangle^{\alpha}\psi\|_{L^{\infty}}\|\widehat{\varphi}\|_2\\
\lesssim & \|\varphi\|_2.
\end{aligned}    
\end{equation*}
Using the mean value inequality and then Sobolev embedding
\begin{equation*}
\begin{aligned}
\|(1+(1-\alpha)|\xi|^{\alpha}) &(F(t_2,\xi)-1)\psi \widehat{\varphi}\mathcal{D}^{\frac{1}{2}+\alpha}(|\xi|^{\alpha}\widetilde{\psi})\|_2\\
\lesssim & \|\langle \xi \rangle^{\alpha} \frac{\xi}{1+|\xi|^{\alpha}}\psi \widehat{\varphi}\mathcal{D}^{\frac{1}{2}+\alpha}(|\xi|^{\alpha}\widetilde{\psi})\|_2\\
\lesssim & \|\langle \xi \rangle^{\alpha} \xi \psi \mathcal{D}^{\frac{1}{2}+\alpha}(|\xi|^{\alpha}\widetilde{\psi})\|_2\|\widehat{\varphi}\|_{L^{\infty}}\\
\lesssim & \|\langle x \rangle^{\frac{1}{2}+\alpha}\varphi\|_2,
\end{aligned}    
\end{equation*}
where we have used Proposition \ref{prop3} to conclude that $\langle \xi \rangle^{\alpha} \xi \psi \mathcal{D}^{\frac{1}{2}+\alpha}(|\xi|^{\alpha}\widetilde{\psi})\in L^2(\mathbb{R})$. In this part, the extra weight $\xi$ is useful to allow such integration.

Next, we write
\begin{equation*}
F(t_2,\xi)-1=-i\frac{\xi}{1+|\xi|^{\alpha}} \int_0^{t_2} F(\sigma,\xi)\, d\sigma.  
\end{equation*}
Then, using \eqref{pointwise2a}, and the linearity of the operator $D^{\frac{1}{2}+\alpha}$, we get
\begin{equation*}
\begin{aligned}
\|\mathcal{D}^{\frac{1}{2}+\alpha}&\big((1+(1-\alpha)|\xi|^{\alpha}) (F(t_2,\xi)-1)\psi \widehat{\varphi}\big)\|_2 \\
\lesssim & \int_0^{t_2}\|D^{\frac{1}{2}+\alpha}\big(\frac{(1+(1-\alpha)|\xi|^{\alpha})}{1+|\xi|^{\alpha}}\xi F(\sigma,\xi)\psi \widehat{\varphi}\big)\|_2\, d\sigma \\
\lesssim & \int_0^{t_2}\|\mathcal{D}^{\frac{1}{2}+\alpha}\big(\frac{(1+(1-\alpha)|\xi|^{\alpha})}{1+|\xi|^{\alpha}}\xi F(\sigma,\xi)\psi \widehat{\varphi}\big)\|_2\, d\sigma\\
=:& \int_0^{t_2} \mathcal{I}_1\, d\sigma.
\end{aligned}    
\end{equation*}
To complete the preceding estimate, we apply Proposition \ref{negativeleibnizprop} to infer
\begin{equation*}
\begin{aligned}
\mathcal{I}_1\lesssim &\|(1+(1-\alpha)|\xi|^{\alpha})\xi F(\sigma,\xi)\psi \widehat{\varphi}\|_2\\
&+\|(1+(1-\alpha)|\xi|^{\alpha})\xi F(\sigma,\xi)\psi \widehat{\varphi}\mathcal{D}^{\frac{1}{2}+\alpha}(|\xi|^{\alpha}\widetilde{\psi})\|_2\\
&+\|\mathcal{D}^{\frac{1}{2}+\alpha}\big((1+(1-\alpha)|\xi|^{\alpha})\xi F(\sigma,\xi)\psi \widehat{\varphi}\big)\|_2\\
=:&\mathcal{I}_{1,1}+\mathcal{I}_{1,2}+\mathcal{I}_{1,3}.
\end{aligned}    
\end{equation*}
Similar arguments to the ones used above give us
\begin{equation*}
 \mathcal{I}_{1,1}+\mathcal{I}_{1,1}\lesssim \|\langle x\rangle^{\frac{1}{2}+\alpha}\varphi\|_2,   
\end{equation*}
where the implicit constant is independent of $\sigma$. We proceed with $\mathcal{I}_{1,3}$. Now, using \eqref{pointwise2}, one gets
\begin{equation*}
\begin{aligned}
\mathcal{I}_{1,3}\lesssim &  \|\mathcal{D}^{\frac{1}{2}+\alpha}\big(\xi F(\sigma,\xi)\psi \widehat{\varphi}\big)\|_2+\|\mathcal{D}^{\frac{1}{2}+\alpha}\big(|\xi|^{\alpha}\xi F(\sigma,\xi)\psi \widehat{\varphi}\big)\|_2 \\
\lesssim & \|\mathcal{D}^{\frac{1}{2}+\alpha}\big(\xi \psi \big)F(\sigma,\xi)\widehat{\varphi}\|_2+\|\xi \psi\mathcal{D}^{\frac{1}{2}+\alpha}(F(\sigma,\xi)\widehat{\varphi})\|_2\\
&+\|\mathcal{D}^{\frac{1}{2}+\alpha}\big(|\xi|^{\alpha} \xi \psi\big)F(\sigma,\xi)\widehat{\varphi}\|_2+\||\xi|^{\alpha}\xi \psi\mathcal{D}^{\frac{1}{2}+\alpha}(F(\sigma,\xi)\widehat{\varphi})\|_2.
\end{aligned}    
\end{equation*}
Now, Proposition \ref{prop3} shows
\begin{equation*}
\begin{aligned}
\|\mathcal{D}^{\frac{1}{2}+\alpha}\big(\xi \psi \big)F(\sigma,\xi)\widehat{\varphi}\|_2+\|\mathcal{D}^{\frac{1}{2}+\alpha}\big(|\xi|^{\alpha} \xi \psi\big)F(\sigma,\xi)\widehat{\varphi}\|_2 \lesssim \|\langle x\rangle^{\frac{1}{2}+\alpha}\varphi\|_2,
\end{aligned}    
\end{equation*}
and since $\xi\psi, \xi|\xi|^{\alpha}\psi \in L^{\infty}(\mathbb{R})$, we apply Plancherel's identity and Proposition \ref{Propwightedineq} to deduce
\begin{equation*}
\begin{aligned}
\|\xi \psi\mathcal{D}^{\frac{1}{2}+\alpha}(F(\sigma,\xi)\widehat{\varphi})\|_2&+\||\xi|^{\alpha}\xi \psi\mathcal{D}^{\frac{1}{2}+\alpha}(F(\sigma,\xi)\widehat{\varphi})\|_2\\ 
\lesssim & \|\langle x\rangle^{\frac{1}{2}+\alpha}e^{\sigma A}\varphi\|_2\\
\lesssim & \langle \sigma \rangle^{\lceil \frac{1}{2}+\alpha \rceil}\|\langle x\rangle^{\frac{1}{2}+\alpha} \varphi\|_2. 
\end{aligned}    
\end{equation*}
This completes the estimate for $\mathcal{J}_{1,3}$, and it follows that
\begin{equation*}
\int_0^{t_2}\mathcal{J}_1\, d\sigma \lesssim \langle t_2 \rangle^{\lceil \frac{1}{2}+\alpha \rceil+1}\|\langle x\rangle^{\frac{1}{2}+\alpha} \varphi\|_2.    
\end{equation*}
Gathering the previous estimates, we have proved
\begin{equation*}
\begin{aligned}
\|\mathcal{D}^{\frac{1}{2}+\alpha}\big(\mathcal{U}_{1,1}\psi\big)\|_2 \lesssim & \|\langle x\rangle^{\frac{1}{2}+\alpha} \varphi\|_2,
\end{aligned}    
\end{equation*}
thus, completing the considerations for $\mathcal{U}_{1,1}$. On the other hand, we write
\begin{equation*}
\begin{aligned}
\widehat{\varphi}(\xi)-\widehat{\varphi}(0)=\xi \int_0^1\partial_{\xi} \widehat{\varphi}(\sigma \xi)\, d\sigma,   
\end{aligned}    
\end{equation*}
so that
\begin{equation*}
\begin{aligned}
\mathcal{U}_{1,3}= &-it_2\frac{1+(1-\alpha)|\xi|^{\alpha}}{(1+|\xi|^{\alpha})^2}\xi\psi\Big( \int_0^1\partial_{\xi} \widehat{\varphi}(\sigma \xi)\, d\sigma\Big).   
\end{aligned}    
\end{equation*}
Hence, using the linearity of the operator $D^{\frac{1}{2}+\alpha}$, property \eqref{pointwise2a}, and Proposition \ref{negativeleibnizprop}, we get
\begin{equation*}
\begin{aligned}
\|D^{\frac{1}{2}+\alpha}(\mathcal{U}_{1,3})\|_2\lesssim \int_0^1&\Big(\|(1+(1-\alpha)|\xi|^{\alpha})\xi \partial_{\xi} \widehat{\varphi}(\sigma\xi)\psi\|_2\\
&+\|(1+(1-\alpha)|\xi|^{\alpha})\xi \partial_{\xi}\widehat{\varphi}(\sigma\xi)\psi \mathcal{D}^{\frac{1}{2}+\theta}(|\xi|^{\alpha}\widetilde{\psi})\|_2\\ &+\|\mathcal{D}^{\frac{1}{2}+\theta}\big((1+(1-\alpha)|\xi|^{\alpha})\xi \partial_{\xi}\widehat{\varphi}(\sigma\xi)\psi\big)\|_2\Big)\, d\sigma.   
\end{aligned}    
\end{equation*}
Thus, the above inequality makes it clear that we can proceed as in the estimates for $\mathcal{J}_1$ above to get
\begin{equation*}
\begin{aligned}
\|D^{\frac{1}{2}+\alpha}(\mathcal{U}_{1,3})\|_2\lesssim  \|\langle x\rangle^{\frac{3}{2}+\alpha} \varphi \|_2.
\end{aligned} 
\end{equation*}
We remark that the integration on $L^2$ and a change of variable show that the scaling by $\sigma$ above does not affect our estimates. Collecting the previous results, we have completed the estimate for $\mathcal{U}_{1,m}$, $m=1,2,3$. 

Next, we deal with $\mathcal{U}_{4,2,m}$, $m=1,2,3$. The linearity of the operator $D^{\frac{1}{2}+\alpha}$ yields
\begin{equation*}
 \begin{aligned}
\sum_{m=1}^3\|D^{\frac{1}{2}+\alpha}&(\mathcal{U}_{4,2,m})\|_2 \\
\lesssim &\int_0^{t_2}\|D^{\frac{1}{2}+\alpha}\big(\frac{1+(1-\alpha)|\xi|^{\alpha}}{(1+|\xi|^{\alpha})^2}(F(t_2-\tau,\xi)-1)\widehat{u^2}(\xi,\tau)\|_{2}\, d\tau\\
&+\int_0^{t_2}\|D^{\frac{1}{2}+\alpha}\big(\frac{1+(1-\alpha)|\xi|^{\alpha}}{(1+|\xi|^{\alpha})^2}(1-\psi)\widehat{u^2}(\xi,\tau)\|_{2}\, d\tau\\
&+\int_0^{t_2}\|D^{\frac{1}{2}+\alpha}\big(\frac{1+(1-\alpha)|\xi|^{\alpha}}{(1+|\xi|^{\alpha})^2}(\widehat{u^2}(\xi,\tau)-\widehat{u^2}(0,\tau))\psi\|_{2}\, d\tau,
 \end{aligned}   
\end{equation*}
where the implicit constant depends on $t_2$. Replacing $\widehat{\varphi}$ by $\widehat{u^2}$, we observe that the terms inside the integral in the inequality above correspond to $\mathcal{U}_{1,m}$, $m=1,2,3$. Thus, the estimates developed above for the factors $\mathcal{U}_{1,m}$ and \eqref{eqinterpnon1} establish
\begin{equation*}
 \begin{aligned}
\sum_{m=1}^3\|D^{\frac{1}{2}+\alpha}(\mathcal{U}_{4,2,m})\|_2 \lesssim & \int_0^{t_2}\|\langle x\rangle^{\frac{3}{2}+\alpha}u^2(\tau)\|_2 \, d\tau\\
 \lesssim & \sup_{t\in [0,T]}\big(\|\langle x \rangle^{\frac{(3+2\alpha)s}{4s-1}} u(t)\|_2^{\frac{4s-1}{2s}}\|u(t)\|_{s,2}^{\frac{1}{2s}}\big),
 \end{aligned}   
\end{equation*}
which is bounded provided that $u\in C([0,T];Z_{s,r})$ with $\max\{1,\frac{(3+2\alpha)s}{4s-1}\}< r<\frac{3}{2}+\alpha$, and the implicit constant depends on $t_2$. Finally, the estimate for $\mathcal{U}_{4,1}$ resembles that of $\mathcal{U}_3$. Indeed, following the argument in \eqref{eqU3}, we have
\begin{equation*}
\begin{aligned}
\mathcal{U}_{4,1}\lesssim & \langle t_2\rangle \int_0^{t_2}\|J_{\xi}^{\frac{1}{2}+\alpha}\big(F(t_2-\tau,\xi)\langle \xi \rangle^{1-\alpha}\partial_{\xi}\widehat{u^2}(\tau)\big)\|_2\, d\tau  \\
\lesssim & \langle t_2\rangle \int_0^{t_2}\|\langle x\rangle^{\frac{1}{2}+\alpha}e^{(t_2-\tau)A} J^{1-\alpha}(xu^2)(\tau)\|_2\, d\tau \\
\lesssim & \langle t_2\rangle^{\lceil \frac{1}{2}+\alpha\rceil+1} \int_0^{t_2}\|\langle x\rangle^{\frac{1}{2}+\alpha}J^{1-\alpha}(xu^2)(\tau)\|_2\, d\tau.
\end{aligned}   
\end{equation*}
To control the inequality above, we commute weights and derivatives to find
\begin{equation*}
\begin{aligned}
\|\langle x\rangle^{\frac{1}{2}+\alpha}J^{1-\alpha}(xu^2)\|_2\lesssim &\|[\langle x\rangle^{\frac{1}{2}+\alpha},J^{1-\alpha}](xu^2)\|_2+\|[J^{1-\alpha},\frac{x}{\langle x\rangle}](\langle x \rangle^{\frac{3}{2}+\alpha}u^2)\|_2 \\
&+ \|\frac{x}{\langle x\rangle}J^{1-\alpha}(\langle x \rangle^{\frac{3}{2}+\alpha}u^2)\|_2.   
\end{aligned}    
\end{equation*}
Since $\frac{1}{2}+\alpha, 1-\alpha \leq 1$, one can prove that $\mathcal{C}_1=[\langle x\rangle^{\frac{1}{2}+\alpha},J^{1-\alpha}]$, and $\mathcal{C}_2=[J^{1-\alpha},\frac{x}{\langle x\rangle}]$ define bounded operators in $L^2$. For example, one way to see this is using the Pseudo-Differential Calculus in \cite[Proof of Lemma 6]{KenigMartelRobbiano2011}, in which one can check that $\mathcal{C}_1,\mathcal{C}_2$ are determined by symbols in the class $\mathcal{S}^{0,0}$ and thus, (57) in \cite{KenigMartelRobbiano2011} shows the desired result. Consequently, by Lemma \ref{leibnizhomog} and Sobolev embedding, we get 
\begin{equation*}
\begin{aligned}
\|\langle x\rangle^{\frac{1}{2}+\alpha}J^{1-\alpha}(xu^2)\|_2\lesssim & \|J^{1-\alpha}(\langle x \rangle^{\frac{3}{2}+\alpha}u^2)\|_2\\
\lesssim & \|J^{1-\alpha}(\langle x \rangle^{\frac{3+2\alpha}{2}\cdot \frac{1}{2}}u)^2\|_2\\
\lesssim & \|\langle x \rangle^{\frac{3+2\alpha}{2}\cdot \frac{1}{2}}u\|_{L^{\infty}}\|J^{1-\alpha}(\langle x \rangle^{\frac{3+2\alpha}{2}\cdot\frac{1}{2}}u)\|_2\\
\lesssim & \|J^{1-\alpha}(\langle x \rangle^{\frac{3+2\alpha}{2}\cdot\frac{1}{2}}u)\|_2^2\\
\lesssim &\|\langle x \rangle^{\frac{s(3+2\alpha)}{4(s+\alpha-1)}}u\|_2^{\frac{2(s+\alpha-1)}{s}}\|J^{s}u\|_{2}^{\frac{2(1-\alpha)}{s}},
\end{aligned}    
\end{equation*}
where the last line above follows from interpolation. Thus, since $s>2(1-\alpha)$ implies that $\frac{s(3+2\alpha)}{4(s+\alpha-1)}<\frac{3}{2}+\alpha$, we get $\mathcal{U}_{4,1}$ is bounded. The proof is complete.
\end{proof}

\begin{proof}[Proof of Proposition \ref{propuniquecon2}] Since $\frac{1+(1-\alpha)|\xi|^{\alpha}}{(1+|\xi|^{\alpha})^2}\psi\in L^2(\mathbb{R})$, using Theorem \ref{theorem9b}, it is enough to show 
\begin{equation}\label{nonintegrab}
\begin{aligned}
\mathcal{D}^{\frac{1}{2}+\alpha} \Big(\frac{1+(1-\alpha)|\xi|^{\alpha}}{(1+|\xi|^{\alpha})^2}\psi \Big) \notin L^2(\mathbb{R}).  
\end{aligned}    
\end{equation}
We denote by $\mathcal{W}(x)=\frac{1+(1-\alpha)x}{(1+x)^2}$. We let $\mathcal{S}_1=\{\xi: 0<\xi<\frac{1}{2}\}$, and given $\xi \in \mathcal{S}_1$, we define $\mathcal{S}_2(\xi)=\{\eta : 0<\eta<\min\{(\frac{1}{2})^{\alpha},\frac{1}{100}\} \xi\}$. We note that if $\xi\in \mathcal{S}_1$, and $\eta\in \mathcal{S}_2(\xi)$,
\begin{equation*}
    |\xi-\eta|\sim |\xi|,
\end{equation*}
and given that $\alpha\in (0,1)$,
\begin{equation*}
    ||\xi|^{\alpha}-|\eta|^{\alpha}|\sim |\xi|^{\alpha}.
\end{equation*}
Moreover, by the mean value theorem, there exists some $z\in [|\eta|^{\alpha},|\xi|^{\alpha}]$ such that
\begin{equation*}
\begin{aligned}
|\mathcal{W}(|\xi|^{\alpha})-\mathcal{W}(|\eta|^{\alpha})|=&|\mathcal{W}'(z)|||\xi|^{\alpha}-|\eta|^{\alpha}|\\
=&\Big|\frac{(1+\alpha)+(1-\alpha)z}{(1+z)^3}\Big|||\xi|^{\alpha}-|\eta|^{\alpha}|\\
\gtrsim & ||\xi|^{\alpha}-|\eta|^{\alpha}|\\
\sim & |\xi|^{\alpha},
\end{aligned}    
\end{equation*}
where we have used that the restrictions on $\xi$ and $\eta$ in the sets $\mathcal{S}_1$ and $\mathcal{S}_2(\xi)$ imply $(1+z)^{3}\sim 1$. We emphasize that the implicit constants above are independent of $\xi \in \mathcal{S}_1$, and $\eta\in \mathcal{S}_2(\xi)$, and they depend on $\alpha\in (0,1)$. Now, from the definition of the fractional derivative, the fact that $\psi\equiv 1$ when $|\xi|\leq 1$, and the previous estimates, we have
\begin{equation*}
\begin{aligned}
\big(\mathcal{D}^{\frac{1}{2}+\alpha}(\mathcal{W}(|\xi|^{\alpha})\psi)\big)^2(\xi)\chi_{\mathcal{S}_1}(\xi)\geq & \Big(\int_{\mathcal{S_2}(\xi)} \frac{|\mathcal{W}(\xi)-\mathcal{W}(\eta)|^2}{|\xi-\eta|^{2+2\alpha}}\, d\eta \Big) \chi_{\mathcal{S}_1}(\xi)\\
\gtrsim & |\xi|^{2\alpha}\Big(\int_{\mathcal{S_2}(\xi)} \frac{1}{|\xi|^{2+2\alpha}}\, d\eta \Big) \chi_{\mathcal{S}_1}(\xi)\\
\gtrsim & |\xi|^{-1} \chi_{\mathcal{S}_1}(\xi).
\end{aligned}
\end{equation*}
Since $ |\xi|^{-\frac{1}{2}} \chi_{\mathcal{S}_1}\notin L^2(\mathbb{R})$, it follows from the previous inequality that \eqref{nonintegrab} holds.  
\end{proof}

%%%%%%%%%%%%%%%%%%%%%%%%%%%%%%%%%%%%%%%%%%%%%%%%%%%%%%%%%%%%%%%%%%%%%%%%%%%%%%%%%%%%%%%%%%%%%%%%%%%%%%%%%%%%%%%%%%%%%%%%%%%%%%%%%%%%%%%%%%%%%%%%%%%%%%%%%%%%%%%%%%%%%%%%%%%%%%%%%%

\section*{Acknowledgments}
The authors thank the anonymous referee for valuable suggestions and comments. The authors were supported by Universidad Nacional de Colombia-Bogot\'a.

%%%%%%%%%%%%%%%%%%%%%%%%%%%%%%%%%%%%%%%%%%%%%%%%%%%%%%%%%%%%%%%%%%%%%%%%%%%%%%%%%%%%%%%%%%%%%%%%%%%%%%%%%%%%%%%%%%%%%%%%%%%%%%%%%%%%%%%%%%%%%%%%%%%%%%%%%%%%%%%%%%%%%%%%%%%%%%%%%%%%%%%%%%%%%%%%%%%%%%%%%%%%%%%%%%%%%%%%%%%%%%%%%%%%%%%%%%%%%%%%%%%%%%%%%%%%%%%%%%%%%%%%%%%%%%%%%%%%%%%%%%

\end{document}